\newtheorem{theorem}{Theorem}[section]
\newtheorem{lemma}[theorem]{Lemma}
\newtheorem{corollary}[theorem]{Corollary}
\newtheorem{proposition}[theorem]{Proposition}
\newtheorem{conjecture}[theorem]{Conjecture}
\theoremstyle{definition}
\newtheorem{remark}[theorem]{Remark}
\newtheorem{definition}[theorem]{Definition}
\newtheorem{example}[theorem]{Example}
\renewenvironment{proof}[1][Proof]{\textbf{#1.} }{\ \rule{0.5em}{0.5em}}
\renewcommand{\theequation}{\thesection.\arabic{equation}}
\newenvironment{notation}{\smallskip{\sc Notation.}}{\smallskip}
\def\subjclass#1{\par{\footnotesize {\bf 2020 Mathematics Subject Classification:} #1}}
\def\keywords#1{\par{\footnotesize {\bf Keywords:} #1}}
\def\Qcb#1{#1}
\def\FRAME#1#2#3#4#5#6#7#8
\chardef\@x10\chardef\@xv60
\def\tcitime{
\def\@time{%
  \@minute\time\@hour\@minute\divide\@hour\@xv
  \ifnum\@hour<\@x 0\fi\the\@hour:%
  \multiply\@hour\@xv\advance\@minute-\@hour
  \ifnum\@minute<\@x 0\fi\the\@minute
  }}%
\def\QCTOpt[#1]#2{%
  \def\QCTOptB{#1}
  \def\QCTOptA{#2}
}
\def\QCTNOpt#1{%
  \def\QCTOptA{#1}
  \let\QCTOptB\empty
}
\def\Qct{%
  \@ifnextchar[{%
    \QCTOpt}{\QCTNOpt}
}
\def\QCBOpt[#1]#2{%
  \def\QCBOptB{#1}
  \def\QCBOptA{#2}
}
\def\QCBNOpt#1{%
  \def\QCBOptA{#1}
  \let\QCBOptB\empty
}
\def\Qcb{%
  \@ifnextchar[{%
    \QCBOpt}{\QCBNOpt}
}
\def\PrepCapArgs{%
  \ifx\QCBOptA\empty
    \ifx\QCTOptA\empty
      {}%
    \else
      \ifx\QCTOptB\empty
        {\QCTOptA}%
      \else
        [\QCTOptB]{\QCTOptA}%
      \fi
    \fi
  \else
    \ifx\QCBOptA\empty
      {}%
    \else
      \ifx\QCBOptB\empty
        {\QCBOptA}%
      \else
        [\QCBOptB]{\QCBOptA}%
      \fi
    \fi
  \fi
}
\def\GRAPHICSPS#1{%
 \ifcase\GRAPHICSTYPE
   \special{ps: #1}%
 \or
   \special{language "PS", include "#1"}%
 \fi
}%
\def\graffile#1#2#3#4{%
    \leavevmode
    \raise -#4 \BOXTHEFRAME{%
        \hbox to #2{\raise #3\hbox to #2{\null #1\hfil}}}%
}%
\def\draftbox#1#2#3#4{%
 \leavevmode\raise -#4 \hbox{%
  \frame{\rlap{\protect\tiny #1}\hbox to #2%
   {\vrule height#3 width\z@ depth\z@\hfil}%
  }%
 }%
}%
\newif\ifwasdraft
\def\GRAPHIC#1#2#3#4#5{%
 \ifnum\draft=\@ne\draftbox{#2}{#3}{#4}{#5}%
  \else\graffile{#1}{#3}{#4}{#5}%
  \fi
 }%
\def\addtoLaTeXparams#1{%
    \edef\LaTeXparams{\LaTeXparams #1}}%
\newif\ifBoxFrame \BoxFramefalse
\newif\ifOverFrame \OverFramefalse
\newif\ifUnderFrame \UnderFramefalse
\def\BOXTHEFRAME#1{%
   \hbox{%
      \ifBoxFrame
         \frame{#1}%
      \else
         {#1}%
      \fi
   }%
}
\def\doFRAMEparams#1{\BoxFramefalse\OverFramefalse\UnderFramefalse\readFRAMEparams#1\end}%
\def\readFRAMEparams#1{%
 \ifx#1\end%
  \let\next=\relax
  \else
  \ifx#1i\dispkind=\z@\fi
  \ifx#1d\dispkind=\@ne\fi
  \ifx#1f\dispkind=\tw@\fi
  \ifx#1t\addtoLaTeXparams{t}\fi
  \ifx#1b\addtoLaTeXparams{b}\fi
  \ifx#1p\addtoLaTeXparams{p}\fi
  \ifx#1h\addtoLaTeXparams{h}\fi
  \ifx#1X\BoxFrametrue\fi
  \ifx#1O\OverFrametrue\fi
  \ifx#1U\UnderFrametrue\fi
  \ifx#1w
    \ifnum\draft=1\wasdrafttrue\else\wasdraftfalse\fi
    \draft=\@ne
  \fi
  \let\next=\readFRAMEparams
  \fi
 \next
 }%
\def\IFRAME#1#2#3#4#5#6{%
      \bgroup
      \let\QCTOptA\empty
      \let\QCTOptB\empty
      \let\QCBOptA\empty
      \let\QCBOptB\empty
      #6%
      \parindent=0pt%
      \leftskip=0pt
      \rightskip=0pt
      \setbox0 = \hbox{\QCBOptA}%
      \@tempdima = #1\relax
      \ifOverFrame
          \typeout{This is not implemented yet}%
          \show\HELP
      \else
         \ifdim\wd0>\@tempdima
            \advance\@tempdima by \@tempdima
            \ifdim\wd0 >\@tempdima
               \textwidth=\@tempdima
               \setbox1 =\vbox{%
                  \noindent\hbox to \@tempdima{\hfill\GRAPHIC{#5}{#4}{#1}{#2}{#3}\hfill}\\%
                  \noindent\hbox to \@tempdima{\parbox[b]{\@tempdima}{\QCBOptA}}%
               }%
               \wd1=\@tempdima
            \else
               \textwidth=\wd0
               \setbox1 =\vbox{%
                 \noindent\hbox to \wd0{\hfill\GRAPHIC{#5}{#4}{#1}{#2}{#3}\hfill}\\%
                 \noindent\hbox{\QCBOptA}%
               }%
               \wd1=\wd0
            \fi
         \else
            \ifdim\wd0>0pt
              \hsize=\@tempdima
              \setbox1 =\vbox{%
                \unskip\GRAPHIC{#5}{#4}{#1}{#2}{0pt}%
                \break
                \unskip\hbox to \@tempdima{\hfill \QCBOptA\hfill}%
              }%
              \wd1=\@tempdima
           \else
              \hsize=\@tempdima
              \setbox1 =\vbox{%
                \unskip\GRAPHIC{#5}{#4}{#1}{#2}{0pt}%
              }%
              \wd1=\@tempdima
           \fi
         \fi
         \@tempdimb=\ht1
         \advance\@tempdimb by \dp1
         \advance\@tempdimb by -#2%
         \advance\@tempdimb by #3%
         \leavevmode
         \raise -\@tempdimb \hbox{\box1}%
      \fi
      \egroup%
}%
\def\DFRAME#1#2#3#4#5{%
 \begin{center}
     \let\QCTOptA\empty
     \let\QCTOptB\empty
     \let\QCBOptA\empty
     \let\QCBOptB\empty
     \ifOverFrame 
        #5\QCTOptA\par
     \fi
     \GRAPHIC{#4}{#3}{#1}{#2}{\z@}
     \ifUnderFrame 
        \nobreak\par #5\QCBOptA
     \fi
 \end{center}%
 }%
\def\FFRAME#1#2#3#4#5#6#7{%
 \begin{figure}[#1]%
  \let\QCTOptA\empty
  \let\QCTOptB\empty
  \let\QCBOptA\empty
  \let\QCBOptB\empty
  \ifOverFrame
    #4
    \ifx\QCTOptA\empty
    \else
      \ifx\QCTOptB\empty
        \caption{\QCTOptA}%
      \else
        \caption[\QCTOptB]{\QCTOptA}%
      \fi
    \fi
    \ifUnderFrame\else
      \label{#5}%
    \fi
  \else
    \UnderFrametrue%
  \fi
  \begin{center}\GRAPHIC{#7}{#6}{#2}{#3}{\z@}\end{center}%
  \ifUnderFrame
    #4
    \ifx\QCBOptA\empty
      \caption{}%
    \else
      \ifx\QCBOptB\empty
        \caption{\QCBOptA}%
      \else
        \caption[\QCBOptB]{\QCBOptA}%
      \fi
    \fi
    \label{#5}%
  \fi
  \end{figure}%
 }%
\def\makeactives{
  \catcode`\"=\active
  \catcode`\;=\active
  \catcode`\:=\active
  \catcode`\'=\active
  \catcode`\~=\active
}
   \gdef\activesoff{%
      \def"{\string"}
      \def;{\string;}
      \def:{\string:}
      \def'{\string'}
      \def~{\string~}
    }
\def\FRAME#1#2#3#4#5#6#7#8{%
 \bgroup
 \@ifundefined{bbl@deactivate}{}{\activesoff}
 \ifnum\draft=\@ne
   \wasdrafttrue
 \else
   \wasdraftfalse%
 \fi
 \def\LaTeXparams{}%
 \dispkind=\z@
 \def\LaTeXparams{}%
 \doFRAMEparams{#1}%
 \ifnum\dispkind=\z@\IFRAME{#2}{#3}{#4}{#7}{#8}{#5}\else
  \ifnum\dispkind=\@ne\DFRAME{#2}{#3}{#7}{#8}{#5}\else
   \ifnum\dispkind=\tw@
    \edef\@tempa{\noexpand\FFRAME{\LaTeXparams}}%
    \@tempa{#2}{#3}{#5}{#6}{#7}{#8}%
    \fi
   \fi
  \fi
  \ifwasdraft\draft=1\else\draft=0\fi{}%
  \egroup
 }%
\def\TEXUX#1{"texux"}
\long\def\QQQ#1#2{%
     \long\expandafter\def\csname#1\endcsname{#2}}%
\long\def\QQA#1#2{}%
\def\QTR#1#2{{\csname#1\endcsname #2}}
\def\EXPAND#1[#2]#3{}%
\def\NOEXPAND#1[#2]#3{}%
\def\LaTeXparent#1{}%
\def\ChildStyles#1{}%
\def\ChildDefaults#1{}%
\def\QTagDef#1#2#3{}%
\def\QQfnmark#1{\footnotemark}
\def\makeatletter\input gnuindex.sty\makeatother\makeindex{\makeatletter\input gnuindex.sty\makeatother\makeindex}%
\def\initial#1{\bigbreak{\raggedright\large\bf #1}\kern 2\p@\penalty3000}}%
 \def\abstract{%
  \if@twocolumn
   \section*{Abstract (Not appropriate in this style!)}%
   \else \small 
   \begin{center}{\bf Abstract\vspace{-.5em}\vspace{\z@}}\end{center}%
   \quotation 
   \fi
  }%
   \def\registered{\relax\ifmmode{}\r@gistered
                    \else$\m@th\r@gistered$\fi}%
 \def\r@gistered{^{\ooalign
  {\hfil\raise.07ex\hbox{$\scriptstyle\rm\text{R}$}\hfil\crcr
  \mathhexbox20D}}}}{}%
\newdimen\theight
\def\Column{%
 \vadjust{\setbox\z@=\hbox{\scriptsize\quad\quad tcol}%
  \theight=\ht\z@\advance\theight by \dp\z@\advance\theight by \lineskip
  \kern -\theight \vbox to \theight{%
   \rightline{\rlap{\box\z@}}%
   \vss
   }%
  }%
 }%
\def\qed{%
 \ifhmode\unskip\nobreak\fi\ifmmode\ifinner\else\hskip5\p@\fi\fi
 \hbox{\hskip5\p@\vrule width4\p@ height6\p@ depth1.5\p@\hskip\p@}%
 }%
\def\miss{\hbox{\vrule height2\p@ width 2\p@ depth\z@}}%
\def\tcol#1{{\baselineskip=6\p@ \vcenter{#1}} \Column}  %
\def\newfmtname{LaTeX2e}
\def\chkcompat{%
   \if@compatibility
   \else
     \usepackage{latexsym}
   \fi
}
  \DeclareOldFontCommand{\rm}{\normalfont\rmfamily}{\mathrm}
  \DeclareOldFontCommand{\sf}{\normalfont\sffamily}{\mathsf}
  \DeclareOldFontCommand{\tt}{\normalfont\ttfamily}{\mathtt}
  \DeclareOldFontCommand{\bf}{\normalfont\bfseries}{\mathbf}
  \DeclareOldFontCommand{\it}{\normalfont\itshape}{\mathit}
  \DeclareOldFontCommand{\sl}{\normalfont\slshape}{\@nomath\sl}
  \DeclareOldFontCommand{\sc}{\normalfont\scshape}{\@nomath\sc}
\def\alpha{\Greekmath 010B }%
\def\beta{\Greekmath 010C }%
\def\gamma{\Greekmath 010D }%
\def\delta{\Greekmath 010E }%
\def\epsilon{\Greekmath 010F }%
\def\zeta{\Greekmath 0110 }%
\def\eta{\Greekmath 0111 }%
\def\theta{\Greekmath 0112 }%
\def\iota{\Greekmath 0113 }%
\def\kappa{\Greekmath 0114 }%
\def\lambda{\Greekmath 0115 }%
\def\mu{\Greekmath 0116 }%
\def\nu{\Greekmath 0117 }%
\def\xi{\Greekmath 0118 }%
\def\pi{\Greekmath 0119 }%
\def\rho{\Greekmath 011A }%
\def\sigma{\Greekmath 011B }%
\def\tau{\Greekmath 011C }%
\def\upsilon{\Greekmath 011D }%
\def\phi{\Greekmath 011E }%
\def\chi{\Greekmath 011F }%
\def\psi{\Greekmath 0120 }%
\def\omega{\Greekmath 0121 }%
\def\varepsilon{\Greekmath 0122 }%
\def\vartheta{\Greekmath 0123 }%
\def\varpi{\Greekmath 0124 }%
\def\varrho{\Greekmath 0125 }%
\def\varsigma{\Greekmath 0126 }%
\def\varphi{\Greekmath 0127 }%
\def\nabla{\Greekmath 0272 }
\def\FindBoldGroup{%
   {\setbox0=\hbox{$\mathbf{x\global\edef\theboldgroup{\the\mathgroup}}$}}%
}
\def\Greekmath#1#2#3#4{%
    \if@compatibility
        \ifnum\mathgroup=\symbold
           \mathchoice{\mbox{\boldmath$\displaystyle\mathchar"#1#2#3#4$}}%
                      {\mbox{\boldmath$\textstyle\mathchar"#1#2#3#4$}}%
                      {\mbox{\boldmath$\scriptstyle\mathchar"#1#2#3#4$}}%
                      {\mbox{\boldmath$\scriptscriptstyle\mathchar"#1#2#3#4$}}%
        \else
           \mathchar"#1#2#3#4%
        \fi 
    \else 
        \FindBoldGroup
        \ifnum\mathgroup=\theboldgroup 
           \mathchoice{\mbox{\boldmath$\displaystyle\mathchar"#1#2#3#4$}}%
                      {\mbox{\boldmath$\textstyle\mathchar"#1#2#3#4$}}%
                      {\mbox{\boldmath$\scriptstyle\mathchar"#1#2#3#4$}}%
                      {\mbox{\boldmath$\scriptscriptstyle\mathchar"#1#2#3#4$}}%
        \else
           \mathchar"#1#2#3#4%
        \fi     	    
	  \fi}
\newif\ifGreekBold  \GreekBoldfalse
\let\SAVEPBF=\pbf
\def\pbf{\GreekBoldtrue\SAVEPBF}%
  \newcounter{equationnumber}  
  \def\mathletters{%
     \addtocounter{equation}{1}
     \edef\@currentlabel{\theequation}%
     \setcounter{equationnumber}{\c@equation}
     \setcounter{equation}{0}%
     \edef\theequation{\@currentlabel\noexpand\alph{equation}}%
  }
    \def\BibTeX{{\rm B\kern-.05em{\sc i\kern-.025em b}\kern-.08em
                 T\kern-.1667em\lower.7ex\hbox{E}\kern-.125emX}}}{}%
\def\AmS{{\protect\usefont{OMS}{cmsy}{m}{n}%
                A\kern-.1667em\lower.5ex\hbox{M}\kern-.125emS}}}{}%
\let\DOTSI\relax
\def\RIfM@{\relax\ifmmode}%
\def\FN@{\futurelet\next}%
\def\iint{\DOTSI\intno@\tw@\FN@\ints@}%
\def\iiint{\DOTSI\intno@\thr@@\FN@\ints@}%
\def\iiiint{\DOTSI\intno@4 \FN@\ints@}%
\def\idotsint{\DOTSI\intno@\z@\FN@\ints@}%
\def\ints@{\findlimits@\ints@@}%
\newif\iflimtoken@
\newif\iflimits@
\def\findlimits@{\limtoken@true\ifx\next\limits\limits@true
 \else\ifx\next\nolimits\limits@false\else
 \limtoken@false\ifx\ilimits@\nolimits\limits@false\else
 \ifinner\limits@false\else\limits@true\fi\fi\fi\fi}%
\def\multint@{\int\ifnum\intno@=\z@\intdots@                          
 \else\intkern@\fi                                                    
 \ifnum\intno@>\tw@\int\intkern@\fi                                   
 \ifnum\intno@>\thr@@\int\intkern@\fi                                 
 \int}
\def\multintlimits@{\intop\ifnum\intno@=\z@\intdots@\else\intkern@\fi
 \ifnum\intno@>\tw@\intop\intkern@\fi
 \ifnum\intno@>\thr@@\intop\intkern@\fi\intop}%
\def\intic@{%
    \mathchoice{\hskip.5em}{\hskip.4em}{\hskip.4em}{\hskip.4em}}%
\def\negintic@{\mathchoice
 {\hskip-.5em}{\hskip-.4em}{\hskip-.4em}{\hskip-.4em}}%
\def\ints@@{\iflimtoken@                                              
 \def\ints@@@{\iflimits@\negintic@
   \mathop{\intic@\multintlimits@}\limits                             
  \else\multint@\nolimits\fi                                          
  \eat@}
 \else                                                                
 \def\ints@@@{\iflimits@\negintic@
  \mathop{\intic@\multintlimits@}\limits\else
  \multint@\nolimits\fi}\fi\ints@@@}%
\def\intkern@{\mathchoice{\!\!\!}{\!\!}{\!\!}{\!\!}}%
\def\plaincdots@{\mathinner{\cdotp\cdotp\cdotp}}%
\def\intdots@{\mathchoice{\plaincdots@}%
 {{\cdotp}\mkern1.5mu{\cdotp}\mkern1.5mu{\cdotp}}%
 {{\cdotp}\mkern1mu{\cdotp}\mkern1mu{\cdotp}}%
 {{\cdotp}\mkern1mu{\cdotp}\mkern1mu{\cdotp}}}%
\def\RIfM@{\relax\protect\ifmmode}
\def\text{\RIfM@\expandafter\text@\else\expandafter\mbox\fi}
\let\nfss@text\text
\def\text@#1{\mathchoice
   {\textdef@\displaystyle\f@size{#1}}%
   {\textdef@\textstyle\tf@size{\firstchoice@false #1}}%
   {\textdef@\textstyle\sf@size{\firstchoice@false #1}}%
   {\textdef@\textstyle \ssf@size{\firstchoice@false #1}}%
   \glb@settings}
\def\textdef@#1#2#3{\hbox{{%
                    \everymath{#1}%
                    \let\f@size#2\selectfont
                    #3}}}
\newif\iffirstchoice@
\def\Let@{\relax\iffalse{\fi\let\\=\cr\iffalse}\fi}%
\def\vspace@{\def\vspace##1{\crcr\noalign{\vskip##1\relax}}}%
\def\multilimits@{\bgroup\vspace@\Let@
 \baselineskip\fontdimen10 \scriptfont\tw@
 \advance\baselineskip\fontdimen12 \scriptfont\tw@
 \lineskip\thr@@\fontdimen8 \scriptfont\thr@@
 \lineskiplimit\lineskip
 \vbox\bgroup\ialign\bgroup\hfil$\m@th\scriptstyle{##}$\hfil\crcr}%
\def\Sb{_\multilimits@}%
\def\endSb{\crcr\egroup\egroup\egroup}%
\def\Sp{^\multilimits@}%
\newdimen\ex@
\def\rightarrowfill@#1{$#1\m@th\mathord-\mkern-6mu\cleaders
 \hbox{$#1\mkern-2mu\mathord-\mkern-2mu$}\hfill
 \mkern-6mu\mathord\rightarrow$}%
\def\leftarrowfill@#1{$#1\m@th\mathord\leftarrow\mkern-6mu\cleaders
 \hbox{$#1\mkern-2mu\mathord-\mkern-2mu$}\hfill\mkern-6mu\mathord-$}%
\def\leftrightarrowfill@#1{$#1\m@th\mathord\leftarrow
\mkern-6mu\cleaders
 \hbox{$#1\mkern-2mu\mathord-\mkern-2mu$}\hfill
 \mkern-6mu\mathord\rightarrow$}%
\def\overrightarrow{\mathpalette\overrightarrow@}%
\def\overrightarrow@#1#2{\vbox{\ialign{##\crcr\rightarrowfill@#1\crcr
 \noalign{\kern-\ex@\nointerlineskip}$\m@th\hfil#1#2\hfil$\crcr}}}%
\def\overleftarrow{\mathpalette\overleftarrow@}%
\def\overleftarrow@#1#2{\vbox{\ialign{##\crcr\leftarrowfill@#1\crcr
 \noalign{\kern-\ex@\nointerlineskip}$\m@th\hfil#1#2\hfil$\crcr}}}%
\def\overleftrightarrow{\mathpalette\overleftrightarrow@}%
\def\overleftrightarrow@#1#2{\vbox{\ialign{##\crcr
   \leftrightarrowfill@#1\crcr
 \noalign{\kern-\ex@\nointerlineskip}$\m@th\hfil#1#2\hfil$\crcr}}}%
\def\underrightarrow{\mathpalette\underrightarrow@}%
\def\underrightarrow@#1#2{\vtop{\ialign{##\crcr$\m@th\hfil#1#2\hfil
  $\crcr\noalign{\nointerlineskip}\rightarrowfill@#1\crcr}}}%
\def\underleftarrow{\mathpalette\underleftarrow@}%
\def\underleftarrow@#1#2{\vtop{\ialign{##\crcr$\m@th\hfil#1#2\hfil
  $\crcr\noalign{\nointerlineskip}\leftarrowfill@#1\crcr}}}%
\def\underleftrightarrow{\mathpalette\underleftrightarrow@}%
\def\underleftrightarrow@#1#2{\vtop{\ialign{##\crcr$\m@th
  \hfil#1#2\hfil$\crcr
 \noalign{\nointerlineskip}\leftrightarrowfill@#1\crcr}}}%
\def\qopnamewl@#1{\mathop{\operator@font#1}\nlimits@}
\let\nlimits@\displaylimits
\def\setboxz@h{\setbox\z@\hbox}
\def\varlim@#1#2{\mathop{\vtop{\ialign{##\crcr
 \hfil$#1\m@th\operator@font lim$\hfil\crcr
 \noalign{\nointerlineskip}#2#1\crcr
 \noalign{\nointerlineskip\kern-\ex@}\crcr}}}}
 \def\rightarrowfill@#1{\m@th\setboxz@h{$#1-$}\ht\z@\z@
  $#1\copy\z@\mkern-6mu\cleaders
  \hbox{$#1\mkern-2mu\box\z@\mkern-2mu$}\hfill
  \mkern-6mu\mathord\rightarrow$}
\def\leftarrowfill@#1{\m@th\setboxz@h{$#1-$}\ht\z@\z@
  $#1\mathord\leftarrow\mkern-6mu\cleaders
  \hbox{$#1\mkern-2mu\copy\z@\mkern-2mu$}\hfill
  \mkern-6mu\box\z@$}
\def\projlim{\qopnamewl@{proj\,lim}}
\def\injlim{\qopnamewl@{inj\,lim}}
\def\varinjlim{\mathpalette\varlim@\rightarrowfill@}
\def\varprojlim{\mathpalette\varlim@\leftarrowfill@}
\def\varliminf{\mathpalette\varliminf@{}}
\def\varliminf@#1{\mathop{\underline{\vrule\@depth.2\ex@\@width\z@
   \hbox{$#1\m@th\operator@font lim$}}}}
\def\varlimsup{\mathpalette\varlimsup@{}}
\def\varlimsup@#1{\mathop{\overline
  {\hbox{$#1\m@th\operator@font lim$}}}}
\def\align{\@verbatim \frenchspacing\@vobeyspaces \@alignverbatim
You are using the "align" environment in a style in which it is not defined.}
\let\csname endalign*\endcsname =\endtrivlist
\def\alignat{\@verbatim \frenchspacing\@vobeyspaces \@alignatverbatim
You are using the "alignat" environment in a style in which it is not defined.}
\let\csname endalignat*\endcsname =\endtrivlist
\def\xalignat{\@verbatim \frenchspacing\@vobeyspaces \@xalignatverbatim
You are using the "xalignat" environment in a style in which it is not defined.}
\let\csname endxalignat*\endcsname =\endtrivlist
\def\gather{\@verbatim \frenchspacing\@vobeyspaces \@gatherverbatim
You are using the "gather" environment in a style in which it is not defined.}
\let\csname endgather*\endcsname =\endtrivlist
\def\multiline{\@verbatim \frenchspacing\@vobeyspaces \@multilineverbatim
You are using the "multiline" environment in a style in which it is not defined.}
\let\csname endmultiline*\endcsname =\endtrivlist
\def\arrax{\@verbatim \frenchspacing\@vobeyspaces \@arraxverbatim
You are using a type of "array" construct that is only allowed in AmS-LaTeX.}
\def\tabulax{\@verbatim \frenchspacing\@vobeyspaces \@tabulaxverbatim
You are using a type of "tabular" construct that is only allowed in AmS-LaTeX.}
\let\csname endarrax*\endcsname =\endtrivlist
\let\csname endtabulax*\endcsname =\endtrivlist
\def\@@eqncr{\let\@tempa\relax
    \ifcase\@eqcnt \def\@tempa{& & &}\or \def\@tempa{& &}%
      \else \def\@tempa{&}\fi
     \@tempa
     \if@eqnsw
        \iftag@
           \@taggnum
        \else
           \@eqnnum\stepcounter{equation}%
        \fi
     \fi
     \global\tag@false
     \global\@eqnswtrue
     \global\@eqcnt\z@\cr}
 \def\endequation{%
     \ifmmode\ifinner 
      \iftag@
        \addtocounter{equation}{-1} 
        $\hfil
           \displaywidth\linewidth\@taggnum\egroup \endtrivlist
        \global\tag@false
        \global\@ignoretrue   
      \else
        $\hfil
           \displaywidth\linewidth\@eqnnum\egroup \endtrivlist
        \global\tag@false
        \global\@ignoretrue 
      \fi
     \else   
      \iftag@
        \addtocounter{equation}{-1} 
        \eqno \hbox{\@taggnum}
        \global\tag@false%
        $$\global\@ignoretrue
      \else
        \eqno \hbox{\@eqnnum}
        $$\global\@ignoretrue
      \fi
     \fi\fi
 } 
 \newif\iftag@ \tag@false
 \def\tag{\@ifnextchar*{\@tagstar}{\@tag}}
 \def\@tag#1{%
     \global\tag@true
     \global\def\@taggnum{(#1)}}
 \def\@tagstar*#1{%
     \global\tag@true
     \global\def\@taggnum{#1}%
}
\begin{document}

\title{Poincar\'e constant on manifolds with ends}
\author{Alexander Grigor'yan\thanks{%
Funded by the Deutsche Forschungsgemeinschaft (DFG, German Research Foundation) - SFB 1283/2 2021 - 317210226} \\
Department of Mathematics \\
University of Bielefeld\\
33501 Bielefeld, Germany \\
grigor@math.uni-bielefeld.de \and Satoshi Ishiwata\thanks{%
Partially supported by JSPS KAKENHI 17K05215 and 22K03280} \\
Department of Mathematical Sciences\\
Yamagata University\\
Yamagata 990-8560, Japan\\
ishiwata@sci.kj.yamagata-u.ac.jp \and Laurent Saloff-Coste \thanks{%
Partially supported by NSF grant DMS-1707589 and DMS-2054593} \\
Department of Mathematics\\
Cornell University\\
Ithaca, NY, 14853-4201, USA\\
lsc@math.cornell.edu}
\date{\today }
\maketitle

\begin{abstract}
We obtain optimal estimates of the Poincar\'e constant of central balls on
manifolds with finitely many ends. Surprisingly enough, the Poincar\'e constant is
determined by the \emph{second} largest end. The proof is based on the argument 
by Kusuoka-Stroock where the heat kernel estimates on the central balls play an
essential role. For this purpose, we extend earlier heat kernel estimates obtained by the 
authors 
 to a larger class of parabolic manifolds with ends.
\end{abstract}

\subjclass{Primary 58C40, Secondary 35K08, 58J65 and 58J35}
\keywords{Poincar\'e inequality, Poincar\'e constant, manifold with ends, model manifold, 
 heat kernel}
\tableofcontents


\section{Introduction}

\label{introduction}

Let $M$ be a Riemannian manifold. Denote by $\mu$ the Riemannian
measure on $M$ and by $\nabla$ the gradient.
For a precompact connected open set $U\subset M$, define the \textit{%
Poincar\'{e} constant} $\Lambda (U)$ 
as the smallest number such that the following inequality holds for all $%
f\in C^{1}(\overline{U})$: 
\[
\int_{U}|f-f_{U}|^{2}d\mu \leq \Lambda (U)\int_{U}|\nabla f|^{2}d\mu ,
\]%
where $f_{U}:=\frac{1}{\mu (U)}\int_{U}fd\mu $. Equivalently, we have 
\[
\Lambda (U)= \frac{1}{\lambda (U)},
\]%
where $\lambda(U)$ is the smallest positive eigenvalue of $-\Delta $ 
in $U$ with the Neumann condition on $\partial U$. Here $\Delta$ is the 
Laplace-Beltrami operator on $M$. 
Estimating the Poincar\'e constant has many applications. See \cite{Bate},
 \cite{Besson}, \cite{Carron 2022}, \cite{Colding}, \cite{Coulhon 2020},  \cite{SC LNS} for examples and references therein.

Denote by $d\left( x,y\right) $ the geodesic distance on $M$ and by $B\left(
x,r\right) $ -- open geodesic balls on $M$. In this paper we are concerned
with estimating the Poincar\'{e} constant $\Lambda \left( B\left(
x,r\right) \right) $. It is well-known that in $\mathbb{R}^{n}$ 
\[
\Lambda (B(x,r))=C_{n}r^{2}.
\]%
It is also known by \cite{Li-Yau} that on complete non-compact manifolds 
with non-negative Ricci curvature
\begin{equation}
\Lambda(B(x,r)) \simeq r^2.
\label{LYPI}
\end{equation}

There are other classes of manifolds satisfying (\ref{LYPI}),
for example, Lie groups of polynomial volume growth 
(see \cite{Coulhon-SC} and \cite{Grigoryan 1991}), 
the tube manifold around the square lattice $\mathbb{Z}^d$ (or jungle gym).

However, there are natural examples of manifolds where (\ref{LYPI}) does not hold,
for example, the hyperbolic spaces where $\Lambda (B(x,r))$
grows exponentially in $r$. Another example that is more relevant for this
paper is the connected sum $\mathbb{R}^{n}\#\mathbb{R}^{n}$, where $n\geq 2$%
. By $\mathbb{R}^{n}\#\mathbb{R}^{n}$ we denote any manifold that is
obtained by gluing together two copies of $\mathbb{R}^{n}$ over a compact
tube (see Fig. \ref{figure: Rn+Rn}). 
\begin{figure}[tbph]
\begin{center}
\scalebox{0.8}{
\includegraphics{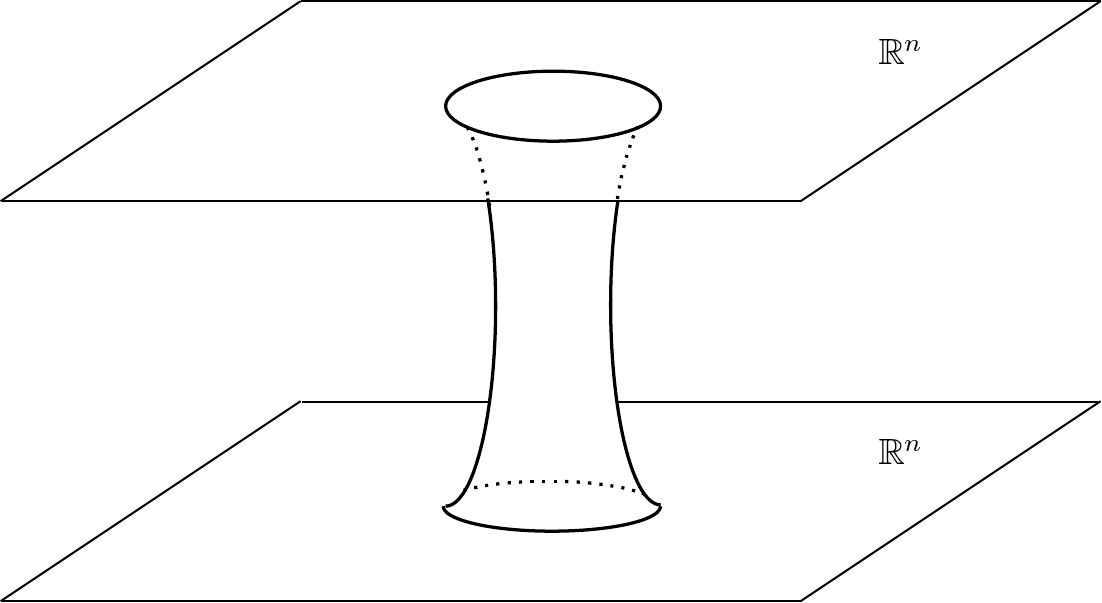} 
}
\end{center}
\caption{$\mathbb{R}^n\# \mathbb{R}^n$}
\label{figure: Rn+Rn}
\end{figure}

It follows from the results of this paper (Theorems \ref{main non-parab}  and \ref{main parab}) 
that on such a manifold%
\[
\Lambda (B(o,r))\simeq \left\{ 
\begin{array}{ll}
r^{n} & \mbox{if }n>2, \\ 
r^{2}\log r & \mbox{if }n=2,%
\end{array}%
\right.
\]%
that is,
\[
\lambda (B(o,r))\simeq \left\{ 
\begin{array}{ll}
\frac{1}{r^{n}} & \mbox{if }n>2, \\ 
\frac{1}{r^{2}\log r} & \mbox{if }n=2%
\end{array}%
\right.
\]%
for all large $r$ and for a central reference point $o\in \mathbb{R}^{n}\#%
\mathbb{R}^{n}$ (see Section \ref{SecExamples}). 
In these simple cases, these results are well-known to the specialists of the subject.

Consider now a connected sum $M=M_1 \# \cdots \# M_k$ of $k$ model 
manifolds $M_1, \ldots, M_k$ with a same dimension $N$
(see Section \ref{SecExamples} for details of this construction). 
For example, $M_i$ can be a surface of revolution (see Fig. 
\ref{figure: ex2}).
Assume that the volume growth function $V_i(r)$ of $M_i$ satisfies
 for some $\alpha_i>0$ and $\beta_i \in \mathbb{R}$
\[
V_i(r)\simeq r^{\alpha_i}(\log r)^{\beta_i}, ~~(i=1, \ldots ,k,  ~r\gg 1).
\] 
We assume that 
\[
(N , 0) \succeq (\alpha_1, \beta_1) \succeq 
(\alpha_2, \beta_2) \succeq \cdots \succeq (\alpha_k, \beta_k)
\]
in the sense of the lexicographical order which implies that
\[V_1(r) \gtrsim V_2(r) \gtrsim \cdots \gtrsim V_k(r)
\]
(see Fig. \ref{figure: ex2}). 
It follows from the main results of this paper (Theorems \ref{main non-parab}  and \ref{main parab}) 
that the Poincar\'e constant 
$\Lambda (B(o,r))$ on $M$ is determined, quite surprisingly, solely
by the {\bf{\emph{second largest}}} end $M_2$: 
\[
\Lambda (B(o,r))\simeq \left\{ 
\begin{array}{ll}
r^{\alpha _{2}}(\log r)^{\beta _{2}} & \mbox{if }(\alpha _{2},\beta
_{2})\succ (2,1), \\ 
r^{2} \left(\log r \right) \left(\log \log r\right) & \mbox{if }(\alpha _{2},\beta _{2})=(2,1), \\ 
r^{2}\log r & \mbox{if }\alpha _{2}=2,\beta _{2}<1, \\ 
r^{2} & \mbox{if }\alpha _{2}<2%
\end{array}%
\right.
\] 
for all large $r$ (see Example \ref{ex11} for details).

\begin{figure}[tbph]
\begin{center}
\scalebox{0.6}{
\includegraphics{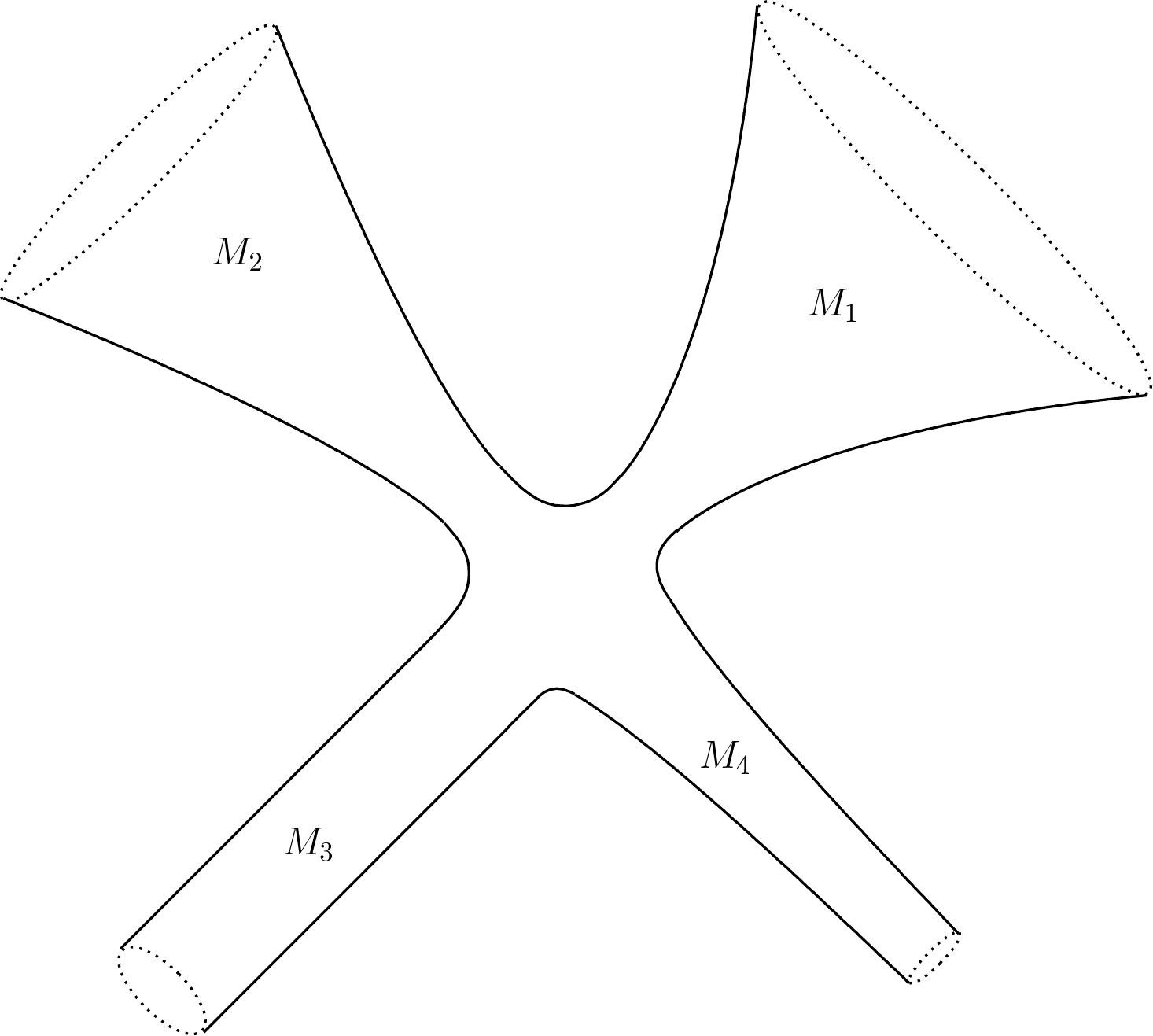} 
}
\end{center}
\caption{Connected sum of model manifolds $M_1, M_2, M_3, M_4$}
\label{figure: ex2}
\end{figure}

Let us mention for comparison that if $V_i(r) \simeq r^{\alpha_i}$ then 
the heat kernel long time behavior is determined by the end $M_i$ having $\alpha_i$ 
nearest to $2$ (see Example \ref{example heat}).

In the next section, we describe a more general class of manifolds with ends where we obtain 
two sided estimates of the Poincar\'e constant. The main results are stated in 
Theorems \ref{main non-parab} and \ref{main parab}. Proofs are given in Sections 
\ref{SecPI} and \ref{section Whitney} 
and use crucially heat kernel estimates.

\begin{notation}
The notation $f\simeq g$ for two non-negative functions $f,g$ means
that there are two positive constants $c_{1},c_{2}$ such that $c_{1}g\leq
f\leq c_{2}g$ for the specified range of the arguments of $f$ and $g$.
Similarly, the notation $f\gtrsim g$ (resp. $f\lesssim g$) means that there
is a positive constant $c$ such that $cf\geq g$ (resp. $f\leq cg$) .
Throughout this article, the letters $c,C,b,...$ denote
positive constants whose values may be different at different instances.
When the value of a constant is significant, it will be explicitly stated. 
\end{notation}



\section{Main results}\setcounter{equation}{0}

\subsection{Heat kernels}
\label{subsection Heat kernels}
Let us recall some known results about the heat kernel
on manifolds. Let $M$ be a Riemannian manifold. Denote by $\mathrm{vol}$ the Riemannian
measure on $M$. Given a smooth positive function $\sigma $ on $M$, define a
measure $\mu $ on $M$ by $d\mu =\sigma d\mathrm{vol}$. The pair $(M,\mu )$
is called a \textit{weighted manifold}. Any Riemannian manifold can be
considered as a weighted manifold with $\sigma =1$.
The Laplace operator $\Delta $ of the weighted manifold $(M,\mu )$ is
defined by 
\[
\Delta =\frac{1}{\sigma }\mathrm{div}(\sigma \nabla ),
\]%
where $\mathrm{div}$ and $\nabla $ are the divergence and the gradient of
the Riemannian metric of $M$. The operator $\Delta $ is known to be
symmetric with respect to the measure $\mu $ (see \cite{G AMS}).

Set $V\left( x,r\right) =\mu \left( B\left( x,r\right) \right) $.

\begin{definition}
We say that a weighted manifold $M$ satisfies the \emph{volume doubling} condition %
{\rm{(VD)}} if there exists a constant $C$ such that, for all $x\in M$ and $r>0$,%
\[
V\left( x,2r\right) \leq CV\left( x,r\right) .
\]
\end{definition}

\begin{definition}
We say that a weighted manifold $M$ admits the \emph{scale invariant Poincar\'{e}
inequality} {\rm{(PI)}} if there exist constants $C>0$ and $\kappa \geq 1$ such
that, for all $x\in M$ and $r>0,$ 
and for all $f\in C^{1}(\overline{B(x,\kappa r)})$, 
\[
\int_{B(x,r)}|f-f_{B(x,r)}|^{2}d\mu \leq Cr^{2}\int_{B(x,\kappa r)}|\nabla
f|^{2}d\mu .
\]
\end{definition}

Denote by $p\left( t,x,y\right) $ the heat kernel of $M$, that is, the
minimal positive fundamental solution of the heat equation $\partial
_{t}u=\Delta u$. The following theorem is a combined result of \cite%
{Grigoryan 1991}, \cite{SC 1992} based on previous contributions of Moser 
\cite{Moser}, Kusuoka--Stroock \cite{KS} et al.

\begin{theorem}
\label{TVD+PI}On a geodesically complete, non-compact weighted manifold $M$,
the following conditions are equivalent:

\begin{enumerate}
\item[(i)] {\rm{(PI)}} and {\rm{(VD)}}.

\item[(ii)] The Li-Yau type heat kernel estimates: 
\begin{equation}
p(t,x,y)\asymp \frac{C}{V(x,\sqrt{t})}\exp \left( -b\frac{d^{2}(x,y)}{t}%
\right) ,  \label{LY}
\end{equation}%
where the sign $\asymp $ means that both $\leq $ and $\geq $ hold but with
different values of the positive constants $C$ and $b$.

\item[(iii)] The uniform parabolic Harnack inequality (for the definition
see \cite[Section 2.1]{G-SC stability}).
\end{enumerate}
\end{theorem}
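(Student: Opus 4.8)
The plan is to establish the cycle of implications (i)$\,\Rightarrow\,$(iii)$\,\Rightarrow\,$(ii)$\,\Rightarrow\,$(i); this is exactly the Grigor'yan--Saloff-Coste characterization of Li--Yau bounds, so the argument assembles the methods of Moser \cite{Moser}, Kusuoka--Stroock \cite{KS}, \cite{Grigoryan 1991} and \cite{SC 1992}, and I only describe the architecture, indicating where the genuine difficulty lies. (Alternatively one can argue (ii)$\,\Leftrightarrow\,$(iii) directly, since two-sided Gaussian heat kernel bounds and the uniform parabolic Harnack inequality are classically equivalent.)

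\emph{Step} (i)$\,\Rightarrow\,$(iii). From (VD) and (PI) one first derives, by the standard argument combining doubling with the Poincar\'e inequality, a scale of localized Sobolev (or Nash) inequalities on geodesic balls, with exponent and constants uniform in the centre and the radius. Inserting these into Moser's iteration scheme yields local parabolic mean-value inequalities: the supremum of a nonnegative subsolution of $\partial_t u=\Delta u$ over a parabolic cylinder is bounded by a suitable $L^p$ average ($p>0$ arbitrary) over a slightly larger cylinder, and dually an infimum bound for nonnegative supersolutions. The passage from ``supremum over the past cylinder'' to ``infimum over the future cylinder'' is bridged by a John--Nirenberg-type lemma applied to $\log u$ for a positive supersolution $u$, the Poincar\'e inequality entering once more to control the oscillation of $\log u$. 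Assembling these estimates produces the uniform parabolic Harnack inequality. The technical heart is the bookkeeping: the dilation constant $\kappa$ in (PI) forces one to chain balls, and (VD) must be invoked to ensure that all covering and telescoping estimates stay uniform in $(x,r)$; geodesic completeness guarantees that the relevant balls are relatively compact so that the local regularity theory applies.

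\emph{Step} (iii)$\,\Rightarrow\,$(ii). Apply the parabolic Harnack inequality to the function $(t,y)\mapsto p(t,x,y)$. Harnack across nearby space-time points, together with conservativeness of the heat semigroup (itself a consequence of (iii)), first yields (VD) and the on-diagonal lower bound $p(t,x,x)\gtrsim 1/V(x,\sqrt t)$, using $\int_M p(t,x,y)\,d\mu(y)=1$. Iterating Harnack along a chain of balls following a geodesic from $x$ to $y$ upgrades this to the full Gaussian \emph{lower} bound in (\ref{LY}). For the \emph{upper} bound, the mean-value inequality contained in the Harnack estimate gives $p(t,x,x)\lesssim 1/V(x,\sqrt t)$, and one promotes this on-diagonal bound to the Gaussian off-diagonal upper estimate via the integrated maximum principle (Davies--Gaffney exponential $L^2$ decay combined with the on-diagonal bound). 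The main obstacle in this step is the upper bound and, throughout, keeping every constant uniform in $x$.

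\emph{Step} (ii)$\,\Rightarrow\,$(i). (VD) follows from the two-sided on-diagonal estimate together with the semigroup identity $p(2t,x,x)=\int_M p(t,x,y)^2\,d\mu(y)$ and the Harnack inequality that (\ref{LY}) itself implies. For (PI): expressing $f-f_{B(x,r)}$ through the heat semigroup at a time comparable to $r^2$ and using the Gaussian lower bound on $B(x,\kappa r)$ to estimate the resulting quadratic form gives the scale-invariant Poincar\'e inequality on $B(x,r)$ --- the classical route from Li--Yau bounds to (PI). Equivalently, (\ref{LY}) implies the parabolic Harnack inequality (iii) directly, and one closes the loop through the previous step. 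Under the stated hypotheses of geodesic completeness and non-compactness, each of these implications is contained in \cite{Grigoryan 1991} and \cite{SC 1992} (see also \cite{G-SC stability}); the present theorem simply collects them.
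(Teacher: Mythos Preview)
The paper does not give its own proof of this theorem: it is stated as a known result, attributed to \cite{Grigoryan 1991} and \cite{SC 1992} with antecedents in \cite{Moser} and \cite{KS}. Your sketch correctly outlines the standard cycle of implications from those references, so it is consistent with what the paper cites rather than an alternative to anything the paper proves.
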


The estimate (\ref{LY}) was proved for the first time by Li and Yau \cite%
{Li-Yau} on manifolds of non-negative Ricci curvature.

\begin{definition}\label{nice}
We say that a manifold $M$ is \textit{nice} if  $M$ is a geodesically complete, non-compact weighted manifold 
which admits the Poincar\'{e}
inequality (PI) and the volume doubling condition (VD), so that $M$
satisfies each of the conditions of Theorem \ref{TVD+PI}.
\end{definition}

\subsection{Manifold with ends}
\label{subsection ends}
Fix a natural number $k\geq 2$. Let $M_{1},...,M_{k}$ be a sequence of
geodesically complete, non-compact weighted manifolds of the same dimension.

\begin{definition}
Let $M$ be a weighted manifold. We say that $M$ is a manifold with $k$ ends $%
M_{1},\ldots , M_{k}$ and write 
\begin{equation}
M=M_{1}\#...\#M_{k}  \label{M1Mk}
\end{equation}%
if there is a compact set $K\subset M$ so that $M\setminus K$ consists of $k$
connected components $E_{1},E_{2},\ldots ,E_{k}$ such that each $E_{i}$ is
isometric (as a weighted manifold) to $M_{i}\setminus K_{i}$ for some
compact set $K_{i}\subset M_{i}$ (see Fig. \ref{figure: connectedsum}). Each 
$E_{i}$ will be referred to as an \emph{end} of $M$.
\end{definition}

\begin{figure}[tbph]
\begin{center}
\scalebox{0.8}{
\includegraphics{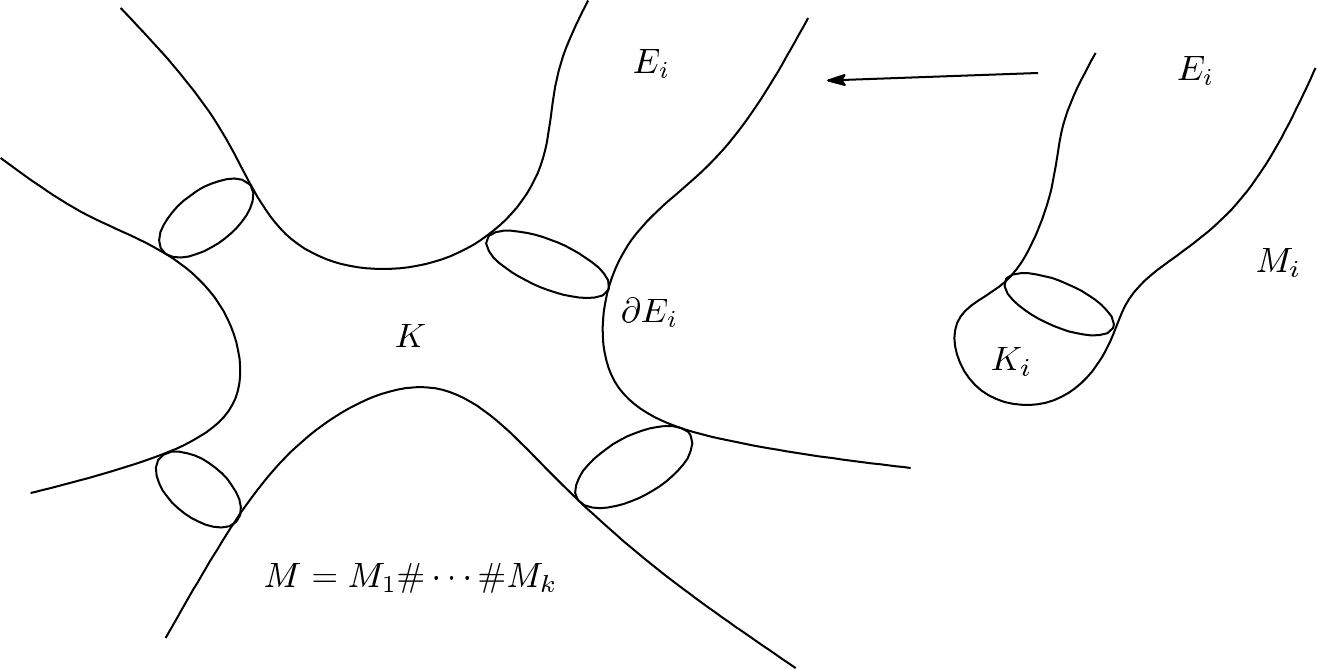} 
}
\end{center}
\caption{Manifold with ends}
\label{figure: connectedsum}
\end{figure}

For the manifold (\ref{M1Mk}) we will also use the notation $M=\#_{i\in
I}M_{i}$ where 
\[
I=\left\{ 1,2,...,k\right\} .
\]

It will be important for us to distinguish between \emph{parabolic} and 
\emph{non-parabolic} manifolds.

\begin{definition}
We say that a weighted manifold $N$ is \textit{parabolic} if the weighted
Laplace operator $\Delta $ has no positive Green function$.$ Equivalently $N$
is parabolic if and only if%
\begin{equation}
\int^{\infty }p\left( t,x,y\right) dt=\infty  \label{intpt}
\end{equation}%
for all/some $x,y\in N$. 
\end{definition}

If the integral in (\ref{intpt}) converges then it determines the minimal
positive Green function%
\[
g\left( x,y\right) =\int_{0}^{\infty }p(t,x,y)dt.
\]%
If $N$ satisfies (VD) and (PI), then $N$ is parabolic if and only if 
\[
\int^{\infty }\frac{rdr}{V\left( x,r\right) }=\infty
\]%
for all/some $x\in N$.

For a manifold with ends (\ref{M1Mk}), we say that an end $E_{i}$ is
parabolic (or non-parabolic) if $M_{i}$ is parabolic (resp., non-parabolic).
It is easy to verify that $M$ is parabolic if and only if all the ends $%
E_{i} $ are parabolic.


In the sequel, we always assume that each end $M_{i}$ is nice (see Definition \ref{nice}). 
Besides, if $M_{i}$
is parabolic then we assume in addition that $M_{i}$ satisfies the following
condition (RCA). 


\begin{definition}[RCA]
We say that a Riemannian manifold $N$ has \emph{relatively connected annuli}
(shortly {\rm{(RCA)}}) with respect to a reference point $o\in N$ if there exist a
constant $A>1$ such that, for any $r>A^{2}$, any two points $x,y\in N$
satisfying $d(x,o)=d(y,o)=r$, are connected by a continuous path in $%
B(o,Ar)\setminus B(o,A^{-1}r)$.
\end{definition}

For each $i=1,...,k$, let $\mu _{i}$ be the reference measure on $M_{i}$ and 
$d_{i}$ be the geodesic distance on $M_{i}$. Denote by $B_{i}\left(
x,r\right) $ geodesic balls on $M_{i}$ and set $V_{i}\left( x,r\right) =\mu
_{i}\left( B_{i}\left( x,r\right) \right) $. Fix a reference point $o_{i}\in
K_{i}$. Set 
\begin{equation}
V_{i}\left( r\right) =V_{i}\left( o_{i},r\right)  \label{Vi}
\end{equation}%
and refer to $V_{i}$ as the (pointed) volume function of $M_{i}$. Note that
for small $r$, we have $V_{i}(r)\simeq V_{j}(r)\simeq r^{n}$ where $n=\dim
M. $


Clearly, $M_{i}$ is parabolic if and only if 
\[
\int^{\infty }\frac{rdr}{V_{i}\left( r\right) }=\infty .
\]%
Define a function $h_{i}\left( r\right) $ for all $r>0$ by%
\begin{equation}
h_{i}\left( r\right) =1+\left( \int_{1}^{r}\frac{sds}{V_{i}(s)}\right) _{+}.
\label{hi}
\end{equation}%
Then $M_{i}$ is parabolic if $h_{i}\left( r\right) \rightarrow \infty $ as $%
r\rightarrow \infty $ and non-parabolic if $h_{i}\simeq 1.$

\begin{definition}\label{def subcritical}
We say that $M_i$ is \emph{subcritical} if for all $r\gg 1$
\begin{equation}
h_i(r) \simeq \frac{r^2}{V_i(r)}.\label{hiVi}
\end{equation}
\end{definition}
Note that the subcriticality condition (\ref{hiVi}) is equivalent to 
\[
h_{i}(r)\lesssim \frac{r^{2}}{V_{i}\left( r\right) },
\]%
since the opposite inequality 
\begin{equation}
h_{i}\left( r\right) \gtrsim \frac{r^{2}}{V_{i}\left( r\right) }  \label{hi>}
\end{equation}
follows trivially from (\ref{hi}) and the doubling property of $V_{i}\left(
r\right) .$

For example, if $V_{i}\left( r\right) \simeq r^{\alpha }$ for large $r$ then%
\[
h_{i}(r)\simeq \left\{ 
\begin{array}{ll}
r^{2-\alpha } & \mbox{if }\alpha <2, \\ 
\log r & \mbox{if }\alpha =2, \\ 
1 & \mbox{if }\alpha >2,%
\end{array}%
\right.
\]%
for large $r$. In this case $M_{i}$ is parabolic if and only if $\alpha \leq
2$ and subcritical if and only if $\alpha<2$.

If $V_{i}\left( r\right) \simeq r^{2}\left( \log r\right)^{\beta}$ for large $r$ then 
\[
h_{i}(r)\simeq \left\{ 
\begin{array}{ll}
(\log r)^{1-\beta } & \mbox{if }\beta <1, \\ 
\log \log r & \mbox{if }\beta =1, \\ 
1 & \mbox{if }\beta >1,%
\end{array}%
\right.
\]%
for large $r$. In this case $M_{i}$ is non-subcritical for all $\beta \in \mathbb{R}$ and parabolic if and only if $\beta \leq
1$.

In the next section we state the main results of this paper about the Poincar%
\'{e} constant on manifolds with ends. For all Poincar\'{e} type estimates
obtained in this article, the range of interest is $r\gg 1$. Similarly, for
all heat kernel estimates, the range of interest is $t\gg 1$.

\subsection{Poincar\'e constant}
First let us state a general definition of Poincar\'e constant. 
Let $(M,\mu)$ be a weighted manifold. 
For precompact connected open sets $U\subset U^{\prime }\subset M$, define the \textit{%
Poincar\'{e} constant} $\Lambda (U,U^{\prime })$ of the couple ($U,U^{\prime
}$) as the smallest number such that the following inequality holds for all $%
f\in C^{1}(\overline{U^{\prime }})$: 
\[
\int_{U}|f-f_{U}|^{2}d\mu \leq \Lambda (U,U^{\prime })\int_{U^{\prime
}}|\nabla f|^{2}d\mu ,
\]%
where $f_{U}:=\frac{1}{\mu (U)}\int_{U}fd\mu $. Equivalently, we have 
\begin{equation}
\Lambda (U,U^{\prime })=\sup_{\stackrel{ f\in C^{1}(\overline{U^{\prime }}) }{f\neq \mathrm{const}}}
\frac{\inf_{\xi \in \mathbb{R}}\int_{U}|f-\xi
|^{2}d\mu }{\int_{U^{\prime }}|\nabla f|^{2}d\mu }.  \label{Lam sup}
\end{equation}%
We note that the Poincar\'e inequality (PI) is equivalent to hold for all $x \in M$, and $r>0$,
\[
\Lambda(B(x, r), B(x, \kappa r)) \lesssim r^2.
\]
The function $(U,U^{\prime })\rightarrow \Lambda (U,U^{\prime })$ is clearly
monotone in the following sense: if $W\subset U\subset U^{\prime }\subset
W^{\prime }$ then $\Lambda \left( W,W^{\prime }\right) \leq \Lambda \left(
U,U^{\prime }\right) .$
If $U=U^{\prime }$, then we use a shorter notation%
\[
\Lambda \left( U\right) :=\Lambda \left( U,U\right) .
\]%
In this case the number 
\begin{equation}
\lambda (U):=\frac{1}{\Lambda (U)}=\inf_{\stackrel{ f\in C^{1}(\overline{U})}
{f\neq \mathrm{const}}}\frac{\int_{U}|\nabla f|^{2}d\mu }{%
\int_{U}|f-f_{U}|^{2}d\mu }  \label{ev}
\end{equation}%
is the spectral gap of $-\Delta $ on $U$, that is, the smallest positive
eigenvalue of $-\Delta $ in $U$ with the Neumann condition on $\partial U$.

Let $M=\#_{i\in I}M_{i}$ be a manifold with nice ends as described above.
For any 
$r>0$ define natural numbers $m=m(r)$ and $n=n(r)$ so that 
\begin{equation}
V_{m}(r)=\max_{i\in I}V_{i}(r)
\label{largest end}
\end{equation}%
and 
\begin{equation}
V_{n}(r)=\max_{i\in I\backslash \{m\}}V_{i}(r).
\label{second largest end}
\end{equation}%
That is, $V_{m}(r)$ is the largest volume function at scale $r$, and $V_{n}(r)$ is
the \emph{second largest} volume function at scale $r$.

Fix a central reference point $o \in K$.
We first state our result in the case when all $M_{i}$ are non-parabolic.

\begin{theorem}[All ends are non-parabolic]
\label{main non-parab}Let $M=\#_{i\in I}M_{i}$ be a manifold with nice ends.
Assume that all $M_{i}$ are non-parabolic.
Then, for all $r\gg 1,$ 
\begin{equation}
\Lambda (B(o,r))\lesssim V_{n}(r), \label{PC upper non-parab}
\end{equation}%
where $n$ is defined in (\ref{second largest end}). 
If, in addition each $V_{i}$ satisfies for all $r\gg 1$%
\[
rV_{i}^{\prime }(r)\lesssim V_{i}(r),
\]%
then, for all $r\gg 1$,%
\[
\Lambda (B(o,r))\simeq V_{n}(r).
\]
\end{theorem}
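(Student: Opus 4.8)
My plan treats the two halves of the statement separately: the bound $\Lambda(B(o,r))\lesssim V_{n}(r)$ via heat kernels following Kusuoka--Stroock, and the reverse bound by a single test function.

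\textbf{Upper bound.} Put $B=B(o,r)$ and let $p^{B}$ be the Neumann heat kernel of $B$, with eigenvalues $0=\lambda_{0}<\lambda_{1}(B)\le\lambda_{2}(B)\le\cdots$ and eigenfunctions $\varphi_{j}$. The plan is first to prove, using the heat kernel estimates on central balls that the paper extends, that the Neumann semigroup on $B$ equilibrates by time $T\simeq V_{n}(r)$, in the quantitative form
\[
\sup_{x,y\in B}\Bigl|\,p^{B}(t,x,y)-\tfrac{1}{\mu(B)}\,\Bigr|\le\tfrac{1}{2\mu(B)}\qquad\text{for all }t\ge C\,V_{n}(r),
\]
and then to convert this into a spectral-gap bound by the classical trace argument: integrating $p^{B}(t,x,x)-\mu(B)^{-1}=\sum_{j\ge1}e^{-\lambda_{j}(B)t}\varphi_{j}(x)^{2}\ge0$ over $B$ gives $\sum_{j\ge1}e^{-\lambda_{j}(B)t}\le\tfrac12$ for $t\ge CV_{n}(r)$, hence $e^{-\lambda_{1}(B)t}\le\tfrac12$, i.e. $\lambda_{1}(B)\gtrsim V_{n}(r)^{-1}$, which is the desired $\Lambda(B(o,r))\lesssim V_{n}(r)$.

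The content is the equilibration estimate, and this is where the heat kernel estimates on central balls and the interplay of the ends do the work. The picture to be made rigorous is as follows. Because each $M_{i}$ is non-parabolic, one has (from $\int^{\infty}\!s\,ds/V_{i}(s)<\infty$ together with volume doubling) $r^{2}\lesssim V_{i}(r)\le V_{n}(r)$; inside each end $E_{i}$ the reflected process therefore mixes by time $\lesssim r^{2}\le V_{n}(r)$, so the slow mode on $B$ is the exchange of total mass among the ends through the junction $K$. That exchange is governed by a $k$-state chain with rates $\simeq\operatorname{cap}(K_{i},\infty)/V_{i}(r)\simeq V_{i}(r)^{-1}$ (the capacities staying bounded below, again by non-parabolicity), whose spectral gap is $\simeq V_{n}(r)^{-1}$ --- the second-largest volume entering because the largest end $E_{m}$ supplies the almost-harmonic ground state. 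The same mechanism underlies a direct Poincar\'e proof organized by a Whitney decomposition of $B$ (Section~\ref{section Whitney}): with reference constant $c=f_{E_{m}\cap B}$, each piece $E_{i}\cap B$ contributes $\lesssim r^{2}\int_{B}|\nabla f|^{2}\lesssim V_{n}(r)\int_{B}|\nabla f|^{2}$ by niceness of $M_{i}$, while the gluing terms are bounded by $V_{n}(r)\sum_{i\ne m}|f_{E_{i}\cap B}-f_{E_{m}\cap B}|^{2}$, and non-parabolicity forces $|f_{E_{i}\cap B}-f_{E_{m}\cap B}|^{2}\lesssim\int_{B}|\nabla f|^{2}$ through a telescoping chain of ball-Poincar\'e inequalities running out along each end.

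\textbf{Lower bound.} Here I would simply test the variational formula (\ref{Lam sup}). Fix $\rho_{0}<\rho_{1}$ independent of $r$ so that, for every $i$, $E_{i}\cap\{d(o,\cdot)\le\rho_{0}\}$ contains a neighbourhood of the gluing region, and let $f$ be smooth on $M$ with $f\equiv1$ on $E_{n}\cap\{d(o,\cdot)\ge\rho_{1}\}$, $f\equiv0$ on $(M\setminus E_{n})\cup\bigl(E_{n}\cap\{d(o,\cdot)\le\rho_{0}\}\bigr)$, and $|\nabla f|\le C$. Then $\int_{B}|\nabla f|^{2}\le C^{2}\mu\bigl(E_{n}\cap\{\rho_{0}\le d(o,\cdot)\le\rho_{1}\}\bigr)=O(1)$ uniformly in $r$, while $\mu(\{f=1\}\cap B)\simeq V_{n}(r)$ and $f_{B}=\mu(B)^{-1}\!\int_{B}f\le\mu(E_{n}\cap B)/\mu(B)$ is bounded away from $1$ (as $m\ne n$ and $\mu(E_{m}\cap B)\gtrsim\mu(E_{n}\cap B)$). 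Hence, for $r$ large,
\[
\int_{B}|f-f_{B}|^{2}\ \ge\ \int_{\{f=1\}\cap B}(1-f_{B})^{2}\,d\mu\ \gtrsim\ \mu(\{f=1\}\cap B)\ \simeq\ V_{n}(r),
\]
so $\Lambda(B(o,r))\ge\int_{B}|f-f_{B}|^{2}\big/\int_{B}|\nabla f|^{2}\gtrsim V_{n}(r)$.

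\textbf{Main obstacle and the role of $rV_{i}'\lesssim V_{i}$.} The hard step is the equilibration estimate: obtaining sharp two-sided long-time bounds for $p^{B(o,r)}(t,x,y)$ that pin the mixing time to $\simeq V_{n}(r)$, rather than merely bounding it by something comparable. This is exactly the extension of the authors' earlier heat kernel estimates announced in the abstract, and it is where the several ends, the gluing region, and (non-)parabolicity must be handled with care. The regularity hypothesis $rV_{i}'(r)\lesssim V_{i}(r)$ enters precisely at this point: it regularizes the volume functions (so that sphere areas, annular capacities, and the scale-dependent labelling $m(r),n(r)$ behave as the $V_{i}$ predict), which is what lets the heat kernel estimates --- hence the equilibration time --- come out in the clean form $\simeq V_{n}(r)$; it is needed much more essentially in the companion theorem allowing parabolic ends. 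Once the heat kernel estimates on central balls are in place, the Kusuoka--Stroock conversion and the test-function lower bound above are routine.
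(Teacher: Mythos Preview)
Your lower bound is essentially the paper's: a bump function supported in the second-largest end $E_n$, zero elsewhere. The paper uses $f_i(x)=h_i(|x|)$, which for non-parabolic ends is $\simeq 1$ outside a compact set, so the two test functions are equivalent. Your version is slightly cleaner here because the gradient lives in a fixed annulus and you avoid the area integral $\int s^2 V_i'(s)/V_i(s)^2\,ds$; in fact your argument does not use $rV_i'\lesssim V_i$ at all.

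This brings up a genuine misattribution in your write-up. You locate the hypothesis $rV_i'(r)\lesssim V_i(r)$ in the upper bound (``it regularizes\ldots\ which is what lets the heat kernel estimates come out in the clean form $\simeq V_n(r)$''). In the paper it is the other way round: the upper bound $\Lambda(B)\lesssim V_n(r)$ is proved with no hypothesis on $V_i'$, and the extra assumption enters only in the lower bound (Theorem~\ref{Pmain(c)}), precisely to control $\int_B|\nabla h_i(|x|)|^2\,d\mu$ via $sV_i'(s)\lesssim V_i(s)$.

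The real gap is in your upper bound. You propose to show that the \emph{Neumann} heat kernel on $B$ equilibrates, i.e.\ $\sup_{x,y}|p^B(t,x,y)-\mu(B)^{-1}|\le\tfrac12\mu(B)^{-1}$ for $t\ge CV_n(r)$, and then read off the gap from the trace. But the heat kernel input available in the paper is for the full-space kernel $p(t,x,y)$ on $M$ (Theorems~\ref{non-parabolic} and~\ref{off-diagonal}), not for the Neumann kernel on a ball; there is no evident route from one to the other, and your equilibration statement is essentially equivalent to the spectral gap you want, so the argument is circular as written. The paper avoids this completely: its Kusuoka--Stroock lemma (Lemma~\ref{Dirichlet Poincare}) needs only a \emph{lower} bound on the \emph{Dirichlet} kernel $p^D_{\kappa B}$ of an \emph{enlarged} ball, via the trivial inequality $p^N\ge p^D$. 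That lower bound (Lemma~\ref{Dirichlet}) is obtained at time $t=r^2$ (not $t\simeq V_n(r)$) by writing $p^D_{\kappa B}=p-\mathbb{E}_y[\mathbf{1}_{\tau_{\kappa B}\le r^2}\,p(\cdot)]$ and showing the exit term is dominated, uniformly in $\kappa$, by $Ce^{-b\kappa^2}$ times the main term; then one takes $A=A_m(r)$ in the largest end and the factors of $V_m(r)$ cancel, giving $\Lambda(B,\kappa B)\lesssim V_n(r)$. Finally the Whitney machinery of Section~\ref{section Whitney} reduces $\kappa>1$ to $\kappa=1$ by bounding $\Lambda(B(o,r))$ in terms of $\Lambda$ of a small central ball (where the $\kappa$-version applies) plus local Poincar\'e constants inside each end (which are $\lesssim r^2\lesssim V_n(r)$). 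Your $k$-state-chain heuristic is the right picture, but the rigorous mechanism is Dirichlet-on-$\kappa B$ at time $r^2$, not Neumann-on-$B$ at time $V_n(r)$.
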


As this theorem shows, the Poincar\'{e} constant is
determined by the \emph{second largest} volume function at scale $r$
via the simple formula $\Lambda(B(o,r)) \simeq V_n(r)$.

The case when some ends are parabolic is more challenging and our results
are less complete. In this case, we will need the following definition which 
requires to subdivide the set of ends into three subsets $I_{super}$, $I_{middle}$ and 
$I_{sub}$. The subset $I_{super}$ is made of ``large ends''. The set of $I_{sub}$ is made of 
``small ends''. Here, large and small are defined by two fixed positive parameter $\epsilon$ 
and $\delta$. Restrictive hypothesis are made on the ends in $I_{middle}$.
\begin{definition}[COE]
\label{DefCOE}
Let $M=\#_{i \in I} M_i$ be a manifold with nice ends. 
We say that  $M$ has \emph{%
critically ordered ends} {\rm{(COE)}} if there exist $\varepsilon ,\delta ,\gamma
_{1},\gamma _{2}>0$ such that 
\begin{equation}
\gamma _{1}<\varepsilon ,~~\gamma _{1}+\gamma _{2}<\delta <2,~~2\gamma
_{1}+\gamma _{2}<2,  \label{gade}
\end{equation}%
and a decomposition 
\begin{equation}
I=I_{super}\sqcup I_{middle}\sqcup I_{sub}  \label{I=}
\end{equation}%
such that the following conditions are satisfied:

\begin{itemize}
\item[$\left( a\right) $] For each $i\in I_{super}$ and all $r\geq 1$, 
\[
V_{i}(r)\gtrsim r^{2+\epsilon }\ .
\]

\item[$\left( b\right) $] For each $i\in I_{sub}$, $V_{i}$ is subcritical (see Definition \ref{def subcritical}) 
and 
\begin{equation}
V_{i}(r)\lesssim r^{2-\delta }\ .  \label{2-de}
\end{equation}

\item[$\left( c\right) $] For each $i\in I_{middle}$, 
\begin{equation}
\left( \frac{R}{r}\right) ^{2-\gamma _{2}}\lesssim \frac{V_{i}(R)}{V_{i}(r)}%
\lesssim \left( \frac{R}{r}\right) ^{2+\gamma _{1}}~\mbox{for all }1\leq
r\leq R.  \label{middle}
\end{equation}%
For any pair $i,j\in I_{middle}$, $V_{i}\gtrsim
V_{j} $ or $V_{j}\gtrsim V_{i}$ (i.e., the ends in $I_{middle}$ can be
ordered according to their volume growth uniformly over $r\in \lbrack
1,\infty )$) and $V_{i}\gtrsim V_{j}$ implies that $V_{i}h_{i}\gtrsim
V_{j}h_{j}$. Finally, if $M$ is parabolic (i.e., all ends are parabolic)
then we assume that 
$V_{i}\gtrsim V_{j}$ also implies $V_{i}h_{i}^{2}\lesssim V_{j}h_{j}^{2}
$.
\end{itemize}
\end{definition}

Note that  (\ref{hiVi}) implies (\ref{2-de}) with \emph{some} $\delta
>0$ (see Section \ref{Secsubcritical}), but here we need $\delta $ also to
satisfy (\ref{gade}).

%
%
%
%
%

\begin{example}
\label{Exaibi}Let all functions $V_{i}\left( r\right) $ have for $r\gg 1$
the form 
\begin{equation}
V_{i}(r)=r^{\alpha _{i}}\left( \log r\right) ^{\beta _{i}}  \label{aibi}
\end{equation}%
for some $\alpha _{i}>0$ and $\beta _{i}\in \mathbb{R}$. Let us show that $M$
satisfies {\rm{(COE)}}. Define $I_{super}$ to consist of all indices 
$i$ such that $\alpha _{i}>2$, $I_{sub}$ to consist of all $i$ such that $%
\alpha _{i}<2$, and $I_{middle}$ to consists of all $i$ with $\alpha _{i}=2$%
. Clearly, $\left( a\right) $ and $\left( b\right) $ are satisfied. Let us
verify $\left( c\right) .$ Indeed, in the case of $\alpha _{i}=2$, we have%
\[
h_{i}(r)\simeq \left\{ 
\begin{array}{ll}
1, & \mbox{if }\beta _{i}>1 \\ 
\log \log r & \mbox{if }\beta _{i}=1 \\ 
\left( \log r\right) ^{1-\beta _{i}} & \mbox{if }\beta _{i}<1%
\end{array}%
\right.
\]%
and, hence,%
\[
V_{i}(r)h_{i}(r)\simeq \left\{ 
\begin{array}{ll}
r^{2}\left( \log r\right) ^{\beta _{i}} & \mbox{if }\beta _{i}>1 \\ 
r^{2}\log r\log \log r & \mbox{if }\beta _{i}=1 \\ 
r^{2}\log r & \mbox{if }\beta _{i}<1.%
\end{array}%
\right.
\]%
We see that 
\[
V_{i}\gtrsim V_{j}\Leftrightarrow \beta _{i}\geq \beta _{j}\Leftrightarrow
V_{i}h_{i}\gtrsim V_{j}h_{j}.
\]%
Moreover, if $M$ is parabolic then all $\beta _{i}\leq 1$ and in this case%
\[
V_{i}(r)h_{i}^{2}(r)\simeq \left\{ 
\begin{array}{ll}
r^{2}\log r\left( \log \log r\right) ^{2} & \mbox{if }\beta _{i}=1 \\ 
r^{2}\left( \log r\right) ^{2-\beta _{i}} & \mbox{if }\beta _{i}<1%
\end{array}%
\right.
\]%
so that%
\[
V_{i}\gtrsim V_{j}\Leftrightarrow \beta _{i}\geq \beta _{j}\Leftrightarrow
V_{i}h_{i}^{2}\lesssim V_{j}h_{j}^{2}.
\]%
Hence, $M\ $satisfies {\rm{(COE)}}. Further examples of such manifolds can be
found in Section \ref{SecExamples}.
\end{example}

Now we can state our main result in the case when the ends may be parabolic.
\begin{theorem}[At least one end is parabolic]
\label{main parab}Let $M=\#_{i\in I}M_{i}$ be a manifold with nice ends.
Assume that each parabolic end $M_{i}$
satisfies also {\rm{(RCA)}} with a reference point $o_{i}$. 
Assume further that $%
M $ satisfies {\rm{(COE)}}. Then, for all $r\gg 1$, we have 
\begin{equation}
\Lambda (B(o,r))\lesssim V_{n}(r)h_{n}(r),  \label{PC upper parab}
\end{equation}%
where $n$ is defined in (\ref{second largest end}). 
If, in addition each $V_{i}$ satisfies for all $r\gg 1$%
\begin{equation}
rV_{i}^{\prime }(r)\lesssim V_{i}(r),  \label{rV'}
\end{equation}%
then, for all $r\gg 1,$%
\[
\Lambda (B(o,r))\simeq V_{n}(r)h_{n}(r).
\]
\end{theorem}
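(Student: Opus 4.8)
The plan is to prove the two inequalities $\Lambda (B(o,r))\lesssim V_{n}(r)h_{n}(r)$ and $\Lambda (B(o,r))\gtrsim V_{n}(r)h_{n}(r)$ separately, the second one only under the extra hypothesis (\ref{rV'}). Write $B=B(o,r)$, let $m=m(r)$, $n=n(r)$ be as in (\ref{largest end})--(\ref{second largest end}), and note that, because $o\in K$, each end piece $E_{i}\cap B$ is, up to the fixed compact core, the annular region $B_{i}(o_{i},r')\setminus B_{i}(o_{i},r'')$ of $M_{i}$ with $r'\simeq r$. Everything rests on two facts about a single nice end $M_{i}$ (with (RCA) added when $M_{i}$ is parabolic): (P) the scale-invariant Poincaré inequality on $M_{i}$, giving $\Lambda _{M_{i}}(B_{i}(o_{i},r))\simeq r^{2}$; and (R) a resistance estimate $|u_{A_{i}}-u_{D_{i}(r)}|^{2}\lesssim h_{i}(r)\int_{B_{i}(o_{i},r)}|\nabla u|^{2}\,d\mu_i$ for $u\in C^{1}$, where $A_{i}=B_{i}(o_{i},1)$ is a fixed anchor and $D_{i}(r)=B_{i}(o_{i},r)\setminus B_{i}(o_{i},r/2)$. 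Here (R) is obtained by telescoping over dyadic annuli, the annulus of radius $2^{k}$ contributing $\simeq 4^{k}/V_{i}(2^{k})$ to the resistance so that the total is $\simeq h_{i}(r)$, and (RCA) is exactly what lets two points at a common radius be joined inside a controlled annulus. Combining (P) and (R) yields an anchored Poincaré inequality on each end, $\int_{B_{i}(o_{i},r)}|u-u_{A_{i}}|^{2}\lesssim (r^{2}+V_{i}(r)h_{i}(r))\int |\nabla u|^{2}\simeq V_{i}(r)h_{i}(r)\int |\nabla u|^{2}$, the last step using (\ref{hi>}).

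For the upper bound, take $f\in C^{1}(\overline{B})$ and estimate $\int_{B}|f-f_{B}|^{2}\leq \int_{B}|f-\xi |^{2}$ with $\xi :=f_{E_{m}\cap B}$, the mean of $f$ over the \emph{largest} end piece; this is the decisive choice, since $\mu(E_m\cap B)=V_m(r)$ dominates $\mu(B)$, so $f_B$ is comparable to $\xi$ and the substitution loses nothing. On $E_{m}\cap B$ the term $\int |f-\xi |^{2}$ is bounded by (P) on $M_{m}$, costing $r^{2}\int_{B}|\nabla f|^{2}$. On $E_{i}\cap B$ with $i\neq m$, insert anchors and use $|f-\xi |^{2}\lesssim |f-f_{A_{i}}|^{2}+|f_{A_{i}}-f_{A_{m}}|^{2}+|f_{A_{m}}-\xi |^{2}$; integrating, the three pieces cost, respectively, $V_{i}(r)h_{i}(r)\int |\nabla f|^{2}$ by the anchored inequality on $M_{i}$, $V_{i}(r)(h_{i}(r)+h_{m}(r))\int_{B} |\nabla f|^{2}$ by (R) on $M_{i}$ and on $M_{m}$ (finitely many $O(1)$ errors coming from $K$ are absorbed via a crude Poincaré inequality on the compact core), and $V_{i}(r)h_{m}(r)\int |\nabla f|^{2}$. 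Since for $i\neq m$ one has $V_{i}\leq V_{n}\leq V_{m}$, hence $h_{m}\lesssim h_{i}$, and since (COE) part $(c)$ together with subcriticality of the small ends gives $V_{i}(r)h_{i}(r)\lesssim V_{n}(r)h_{n}(r)$ for every $i\neq m$ (using that, once some end is parabolic, all ends lie in $I_{middle}\sqcup I_{sub}$ and are uniformly ordered), summing over $i$ gives $\int_{B}|f-f_{B}|^{2}\lesssim (r^{2}+V_{n}(r)h_{n}(r))\int_{B}|\nabla f|^{2}\lesssim V_{n}(r)h_{n}(r)\int_{B}|\nabla f|^{2}$. The delicate point is that this gluing naturally produces the gradient integral over a slightly enlarged ball; upgrading it to the genuine Poincaré constant $\Lambda(B(o,r))=\Lambda(B(o,r),B(o,r))$ with no enlargement is carried out by the Kusuoka--Stroock heat-kernel argument, which needs the on-diagonal bound $\sup_{x\in B(o,\sqrt t\,)}p(t,x,x)\lesssim 1/V_{m}(\sqrt t\,)$ for $t\gg 1$ near the center. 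Establishing this for the (COE) parabolic class is the purpose of extending the authors' earlier heat-kernel work, begun in Section~\ref{subsection Heat kernels}.

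For the matching lower bound we build a test function. Under (\ref{rV'}), the radial profile $\varphi _{i}(s)=h_{i}(\min (s,r))/h_{i}(r)$, rising from $\approx 0$ near $o_{i}$ to $1$ at radius $r$, satisfies by the coarea formula $\int_{B_{i}(o_{i},r)}|\nabla (\varphi _{i}\circ d_{i})|^{2}=\frac{1}{h_{i}(r)^{2}}\int_{1}^{r}\frac{s^{2}V_{i}'(s)}{V_{i}(s)^{2}}\,ds\lesssim \frac{1}{h_{i}(r)^{2}}\int_{1}^{r}\frac{s\,ds}{V_{i}(s)}\simeq \frac{1}{h_{i}(r)}$, the key step $V_{i}'(s)\lesssim V_{i}(s)/s$ being precisely (\ref{rV'}); integration by parts (together with the reverse doubling of $V_i$ that is automatic on a nice end) gives $\int_{B_{i}(o_{i},r)}\varphi _{i}\circ d_{i}\simeq V_{i}(r)$ and $\int_{B_{i}(o_{i},r)}(\varphi _{i}\circ d_{i})^{2}\simeq V_{i}(r)$. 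Define $f$ on $B$ to equal $\varphi _{n}\circ d_{n}$ on end $n$, to equal $-c\,(\varphi _{m}\circ d_{m})$ on end $m$ with $c\simeq V_{n}(r)/V_{m}(r)$ chosen so that $\int_{B}f\approx 0$, to vanish on the remaining ends, and to interpolate through $K$. Then $f_{B}\approx 0$, so $\int_{B}|f-f_{B}|^{2}\simeq \int_{B}f^{2}\simeq V_{n}(r)+c^{2}V_{m}(r)\simeq V_{n}(r)$, while $\int_{B}|\nabla f|^{2}\lesssim \frac{1}{h_{n}(r)}+\frac{c^{2}}{h_{m}(r)}\simeq \frac{1}{h_{n}(r)}+\frac{V_{n}(r)^{2}}{V_{m}(r)^{2}h_{m}(r)}$. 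The argument closes with the inequality $V_{n}(r)^{2}h_{n}(r)\lesssim V_{m}(r)^{2}h_{m}(r)$: for non-parabolic ends this is just $V_{n}\leq V_{m}$ and $h_{n}\simeq h_{m}\simeq 1$, and in the parabolic case it follows from (COE) part $(c)$, which supplies both $V_{m}h_{m}\gtrsim V_{n}h_{n}$ and $V_{m}h_{m}^{2}\lesssim V_{n}h_{n}^{2}$ (multiply these and use $h_{n}\geq h_{m}$). Hence $\int_{B}|\nabla f|^{2}\lesssim 1/h_{n}(r)$ and $\Lambda (B(o,r))\gtrsim V_{n}(r)h_{n}(r)$.

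The main obstacle, and the technical heart of the argument, is the heat-kernel input to the upper bound: obtaining sharp on-diagonal estimates near the center for the enlarged (COE, parabolic) class of manifolds with ends and then running the Kusuoka--Stroock argument so as to land on $B(o,r)$ itself with the sharp constant $V_{n}(r)h_{n}(r)$. This is subtle precisely because several parabolic ends of comparable but unequal volume growth now interact at the junction, and the hierarchy of inequalities among the $V_{i}$ and $h_{i}$ that singles out the second-largest end as the genuine bottleneck is exactly what the conditions (COE) are designed to encode.
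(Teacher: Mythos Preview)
Your upper-bound strategy diverges from the paper's. You propose a direct functional argument: glue anchored Poincar\'e inequalities end by end via a resistance estimate (R), with reference value $\xi=f_{E_m\cap B}$. The paper instead goes entirely through heat kernels: it establishes the two-sided estimate $p(t,o,o)\simeq 1/V_m(\sqrt t)$ under (COE) (Theorem~\ref{main heat}), feeds this and the full off-diagonal estimate of Theorem~\ref{off-diagonal} into a lower bound for the \emph{Dirichlet} heat kernel $p^D_{\kappa B}(r^2,x,y)$ (Lemma~\ref{Dirichlet}), and then applies the Kusuoka--Stroock inequality (Lemma~\ref{Dirichlet Poincare}) to obtain $\Lambda(B,\kappa B)\lesssim V_n h_n$. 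The passage from $\kappa>1$ to $\kappa=1$ is \emph{not} Kusuoka--Stroock (that step itself produces $\kappa>1$, since Lemma~\ref{Dirichlet} requires room to kill boundary effects); it is the Whitney-covering argument of Section~\ref{section Whitney} (Theorem~\ref{main Whitney}), which combines the central-ball estimate $\Lambda(3B_0,3\kappa B_0)$ with (PI) on each end. Also, the heat-kernel input is not the on-diagonal upper bound you state but the full off-diagonal two-sided estimate, without which the Dirichlet lower bound in Lemma~\ref{Dirichlet} cannot be proved.

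Your direct gluing argument has a genuine gap. The resistance estimate (R) on $M_m$ rests on telescoping over dyadic annuli, which requires (RCA); but (RCA) is assumed \emph{only} for parabolic ends. Your parenthetical ``once some end is parabolic, all ends lie in $I_{middle}\sqcup I_{sub}$'' is false: (COE) permits $I_{super}\neq\emptyset$ (and the paper treats that case explicitly). When $m\in I_{super}$, (R) on $M_m$ is unavailable, and the crude bound $|f_{A_m}-\xi|^2\lesssim r^2\int|\nabla f|^2$ (from (PI) alone) gives a contribution $V_i(r)\,r^2\int|\nabla f|^2$ on end $i$; for $i=n\in I_{super}$ this is $V_n(r)\,r^2\not\lesssim V_n(r)h_n(r)\simeq V_n(r)$. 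This is precisely the regime where the paper's heat-kernel detour earns its keep.

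For the lower bound your construction is correct in spirit but more elaborate than needed, and the closing inequality $V_n^2 h_n\lesssim V_m^2 h_m$ is only justified in the sub-case $m,n\in I_{middle}$ with $M$ parabolic and $V_m\gtrsim V_n$ globally (which is what (COE)(c) literally gives; the indices $m(r),n(r)$ are pointwise, and Lemma~\ref{Lem2} is needed to pass to the global dominating index $l$). The paper (Theorem~\ref{Pmain(c)}) takes the simpler test function $f_i(x)=h_i(|x|)$ on $E_i$ and $0$ elsewhere for any fixed $i\neq m$, and uses the elementary fact (\ref{zero}) that a function vanishing on a set of proportion $\varepsilon$ of $B$ satisfies $\int_B|f-f_B|^2\geq \varepsilon\int_B f^2$. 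This avoids the balancing constant $c$ and the inequality $V_n^2 h_n\lesssim V_m^2 h_m$ altogether.
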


Hence, in this case the Poincar\'{e} constant $\Lambda (B(o,r))$ is again
determined by the second largest volume function $V_{n}$
but the formula also involves the associated function $h_n$.

\begin{remark}
To obtain the upper bound of $\Lambda (B(o, r))$ in
 (\ref{PC upper non-parab}) and (\ref{PC upper parab}), 
we first prove the upper bound of $\Lambda(B(o,r), B(o, \kappa r))$ 
for some $\kappa>1$ (See section \ref{SecUBPoincare}). 
To reduce $\kappa>1$ to $\kappa =1$, we need an additional argument presented 
in Section \ref{section Whitney}.
\end{remark}

\begin{example}\label{ex11}
Assume that, under the hypotheses of Theorem \ref{main parab}, all $M_{i}$
have volume functions (\ref{aibi}) as in Example \ref{Exaibi}. Define on the
set of all pair $\left( \alpha _{i},\beta _{i}\right) $ the lexicographical
order $\succeq $, that is, 
\[
\left( \alpha _{i},\beta _{i}\right) \succeq \left( \alpha _{j},\beta
_{j}\right)
\]%
if $\alpha _{i}> \alpha _{j}$ or $\alpha _{i}=\alpha _{j}$ and $\beta
_{i}\geq \beta _{j}.$ Then we have 
\[
V_{i}\gtrsim V_{j}\Leftrightarrow \left( \alpha _{i},\beta _{i}\right)
\succeq \left( \alpha _{j},\beta _{j}\right) .
\]%
Clearly, each of the function (\ref{aibi}) satisfies (\ref{rV'}). By Theorem %
\ref{main parab}, we conclude that the Poincar\'{e} constant on $M$ is
determined by the second largest pair $\left( \alpha _{n},\beta _{n}\right) $%
, that is 
\[
\Lambda (B(o,r))\simeq V_{n}(r)h_{n}(r)\simeq \left\{ 
\begin{array}{ll}
r^{\alpha _{n}}(\log r)^{\beta _{n}} & \mbox{if }(\alpha _{n},\beta
_{n})\succ (2,1), \\ 
r^{2}\log r\log \log r & \mbox{if }(\alpha _{n},\beta _{n})=(2,1), \\ 
r^{2}\log r & \mbox{if }\alpha _{n}=2,\beta _{n}<1, \\ 
r^{2} & \mbox{if }\alpha _{n}<2.%
\end{array}%
\right.
\]
\end{example}

\begin{corollary}
Under the hypotheses of Theorem \emph{\ref{main parab}}, if 
\begin{equation}
|I_{super}|+|I_{middle}|\leq 1,  \label{I+I}
\end{equation}%
then
\[
\Lambda \left( B(o,r)\right) \lesssim r^{2},
\]%
that is,
\[
\lambda (B(o,r)) \gtrsim \frac{1}{r^2}
\]
for all large $r$.  
Consequently, $M$ satisfies {\rm{(PI)}}.
\end{corollary}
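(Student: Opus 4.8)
The plan is to locate, under the extra assumption $|I_{super}|+|I_{middle}|\le 1$, which end carries the second largest volume at large scales, then to read off $\Lambda(B(o,r))\lesssim r^{2}$ from Theorem~\ref{main parab}, and finally to upgrade this central estimate to the scale invariant inequality {\rm(PI)}. \emph{Step 1 (the second largest end is subcritical for $r\gg1$).} Since $k\ge 2$, the subset $I_{sub}$ contains at least $k-1\ge 1$ indices, so it is non-empty. If $|I_{super}|+|I_{middle}|=0$ then $I=I_{sub}$ and every $M_{i}$ is subcritical. If there is exactly one index $i_{0}\in I_{super}\cup I_{middle}$, I claim that the maximizer $m(r)$ of (\ref{largest end}) equals $i_{0}$ for all $r\gg1$. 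Indeed, if $i_{0}\in I_{super}$ then $V_{i_{0}}(r)\gtrsim r^{2+\epsilon}$ by $(a)$ of {\rm(COE)}, while every $j\in I_{sub}$ obeys $V_{j}(r)\lesssim r^{2-\delta}$ by (\ref{2-de}); if $i_{0}\in I_{middle}$ then putting $r=1$ in (\ref{middle}) gives $V_{i_{0}}(R)\gtrsim R^{2-\gamma_{2}}$, and $2-\gamma_{2}>2-\delta$ because $\gamma_{1}+\gamma_{2}<\delta$ by (\ref{gade}). In either case $V_{i_{0}}(r)/V_{j}(r)\to\infty$ for each $j\in I_{sub}$, so $V_{i_{0}}(r)$ eventually exceeds every other $V_{j}(r)$; hence $m(r)=i_{0}$ and the index $n(r)$ of (\ref{second largest end}) lies in $I\setminus\{i_{0}\}=I_{sub}$ once $r$ is large. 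Thus in all cases $M_{n(r)}$ is subcritical for $r\gg1$.

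\emph{Step 2 (the estimate at $o$).} Since $M_{n(r)}$ is subcritical, $h_{n(r)}(r)\simeq r^{2}/V_{n(r)}(r)$ by (\ref{hiVi}), so $V_{n}(r)h_{n}(r)\simeq r^{2}$. Substituting this into the upper bound (\ref{PC upper parab}) of Theorem~\ref{main parab} yields, for all $r\gg1$,
\[
\Lambda(B(o,r))\ \lesssim\ V_{n}(r)h_{n}(r)\ \simeq\ r^{2},
\]
and therefore $\lambda(B(o,r))=\Lambda(B(o,r))^{-1}\gtrsim r^{-2}$.

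\emph{Step 3 (from $o$ to all balls, i.e.\ {\rm(PI)}).} Let $\kappa_{0}=\max_{i}\kappa_{i}$, where $\kappa_{i}$ is the dilation constant of the {\rm(PI)} satisfied by the nice manifold $M_{i}$, and fix $D_{K}\ge\operatorname{diam}K$. Given $x\in M$ and $r>0$, distinguish two cases. If $d(x,K)\ge 2\kappa_{0}r$, then no minimizing geodesic between two points of $B(x,\kappa_{0}r)$ can meet $K$, so $B(x,\kappa_{0}r)$ lies in a single end and is isometric to the corresponding ball of $M_{i}$; the required inequality is then the {\rm(PI)} of $M_{i}$, with $\kappa=\kappa_{0}$. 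If instead $d(x,K)<2\kappa_{0}r$, then $d(x,o)<2\kappa_{0}r+D_{K}$, so with $R:=(1+2\kappa_{0})r+D_{K}$ one has $B(x,r)\subset B(o,R)\subset B(x,\kappa'r)$ for a suitable $\kappa'$, with $R\simeq r$; using $\int_{B(x,r)}|f-f_{B(x,r)}|^{2}d\mu=\inf_{\xi}\int_{B(x,r)}|f-\xi|^{2}d\mu\le\inf_{\xi}\int_{B(o,R)}|f-\xi|^{2}d\mu$ and Step 2 gives
\[
\int_{B(x,r)}|f-f_{B(x,r)}|^{2}\,d\mu\ \lesssim\ R^{2}\int_{B(o,R)}|\nabla f|^{2}\,d\mu\ \lesssim\ r^{2}\int_{B(x,\kappa'r)}|\nabla f|^{2}\,d\mu
\]
as soon as $r$ is large enough that $R\gg1$. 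For the remaining bounded range of $r$ one invokes a standard compactness argument: balls meeting a fixed precompact neighborhood of $K$ satisfy a uniform Poincar\'e inequality with constant $\lesssim r^{2}$ (locally the Riemannian metric is comparable to the Euclidean one), while balls disjoint from $K$ are again controlled by the ends. Collecting the finitely many constants and taking $\kappa=\max(\kappa_{0},\kappa')$ yields {\rm(PI)} on $M$; the bookkeeping here is of the same nature as, and may be carried out with the tools of, Section~\ref{section Whitney}.

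The short Steps 1--2 are routine; the real work is Step 3. It cannot be shortcut via Theorem~\ref{TVD+PI}, because {\rm(VD)} may fail on $M$ for balls centered deep inside a subcritical end whose volume growth is strictly dominated by another end; hence {\rm(PI)} must be assembled by hand from the central estimate of Step 2 together with the {\rm(PI)} of the individual ends. A secondary point, in Step 1, is to make sure that the unique large end \emph{strictly} dominates every subcritical end at large scales, which is exactly what the condition $\gamma_{1}+\gamma_{2}<\delta$ in (\ref{gade}) provides.
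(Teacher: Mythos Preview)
Your proof is correct and follows essentially the same route as the paper's: identify $n(r)\in I_{sub}$, use subcriticality to get $V_{n}h_{n}\simeq r^{2}$, and then patch the central estimate with the (PI) of the individual ends via an anchored/remote decomposition. The only difference is presentational: the paper dispatches your Step~3 in one line by citing \cite[Prop.~4.2]{G-SC stability} (anchored plus remote Poincar\'e implies global Poincar\'e), whereas you sketch that reduction by hand; and the paper simply asserts $n\in I_{sub}$, whereas you supply the verification that the unique non-subcritical end (if any) strictly dominates all others at large scales.
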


\begin{proof}
Indeed, under the hypothesis (\ref{I+I}) the second largest end $M_{n}$
belongs to $I_{sub}.$ By Definition \ref{DefCOE}, for a subcritical end we
have%
\[
h_{n}(r)\simeq r^{2}/V_{n}(r)
\]%
whence%
\[
\Lambda (B(o,r))\lesssim V_{n}\left( r\right) h_{n}\left( r\right) \simeq
r^{2}.
\]%
Note also that if $B(x,\kappa r)\subset E_{i}$ then, by (PI) on $M_{i}$,
we have 
\begin{equation}
\Lambda (B(x,r),B(x,\kappa r))\leq Cr^{2}.  \label{NC}
\end{equation}%
Hence, using the terminology of \cite[Sect. 4]{G-SC stability}, 
(PI) holds for \emph{anchored} and \emph{remote} balls in $M.$
By \cite[Prop. 4.2]{G-SC stability}, (PI) holds for all balls
in $M$.
\end{proof}

For some applications it is desirable to get rid of the hypothesis (COE).
We state the target result here as a conjecture.

\begin{conjecture}
Let $M=\#_{i\in I}M_{i}$ be a manifold with nice ends. 
Assume also that each parabolic end satisfies {\rm{(RCA)}}.
For any $r\gg 1$, define $m=m\left( r\right) $ and $n=n\left( r\right) $ so
that%
\[
V_{m}(r)h_{m}(r)\gtrsim V_{n}(r)h_{n}(r)\gtrsim V_{i}\left( r\right)
h_{i}\left( r\right) \ \ \mbox{for all }i\neq n,m.
\]%
Then%
\[
\Lambda (B(o,r))\lesssim V_{n}(r)h_{n}(r).
\]
\end{conjecture}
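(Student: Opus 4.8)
We sketch a possible route, following the architecture of the proof of Theorem~\ref{main parab}. The estimate $\Lambda(B(o,r))\lesssim V_n(r)h_n(r)$ would be obtained in two stages: (A) an upper bound $\Lambda(B(o,r),B(o,\kappa r))\lesssim V_n(r)h_n(r)$ for the couple, at some fixed $\kappa>1$; and (B) the passage from $\kappa>1$ to $\kappa=1$ carried out in Section~\ref{section Whitney}. Stage (B) uses only (PI) for remote balls inside each nice end, so it is insensitive to (COE) and applies verbatim. Hence the whole problem is stage (A), which, via the Kusuoka--Stroock argument, is equivalent to the lower bound $\lambda_1(B(o,R))\gtrsim\bigl(V_n(R)h_n(R)\bigr)^{-1}$ for all $R\gg1$, and in turn to a Neumann heat kernel minorization $p_{B(o,R)}(T,x,y)\gtrsim 1/\mu(B(o,R))$ at time $T\simeq V_n(R)h_n(R)$, uniformly over $x,y\in B(o,R)$, where $\mu(B(o,R))\simeq\max_i V_i(R)$.

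There is a partial reduction worth making first. Since $\Lambda(B(o,r))$ depends only on the weighted Riemannian structure of $\overline{B(o,r)}$, one may, for a given $r$, replace each end $M_i$ outside $B_i(o_i,2r)$ by a nice manifold whose volume function becomes a pure power of the radius, matched to $V_i$ near radius $r$, obtaining $\widetilde M$ that agrees with $M$ on a neighbourhood of $\overline{B(o,r)}$; then $\Lambda(B_{\widetilde M}(o,r))=\Lambda(B_M(o,r))$, while $\widetilde V_i(r)=V_i(r)$ and $\widetilde h_i(r)=h_i(r)$. Whenever $\widetilde M$ can be arranged to satisfy (COE), Theorem~\ref{main parab} already gives $\Lambda(B(o,r))\simeq V_n(r)h_n(r)$. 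This works whenever each original $V_i$ is, at scales $\le r$, compatible with one of the three r\'egimes $I_{super}$, $I_{middle}$, $I_{sub}$ of Definition~\ref{DefCOE} --- in particular when each $V_i$ is regularly varying --- since truncation cannot repair an incompatibility already present below scale $r$. What remains is the case of ``wild'' ends, whose intermediate-scale volume growth (e.g. an oscillating exponent) is incompatible with the trichotomy; for these neither (COE) nor the truncation helps.

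For the wild case the guiding picture is a finite electrical network: at scale $R$, model $M$ by one central vertex of capacitance $O(1)$ and, for each end, a reservoir vertex of capacitance $\simeq V_i(R)$ joined to the centre by an edge of resistance $\simeq h_i(R)$ --- the last being the capacity estimate $\mathrm{cap}_i\bigl(B_i(o_i,\rho_0),B_i(o_i,R)^c\bigr)\simeq h_i(R)^{-1}$, valid for a nice manifold with (RCA) (this is why (RCA) is imposed on parabolic ends). A direct computation shows that the slowest mode of this network is ``one reservoir against the rest'': when the reservoir involved is large, the mean-zero constraint forces a large counter-value on the other reservoirs, and the variance-to-energy ratio works out to $\simeq V_n(R)h_n(R)$ in every case --- this is the source of the ``second largest'' phenomenon. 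Transferring this to $M$ would proceed by splitting $B(o,R)$ into a fixed collar $K'$ of the central set (on which (PI) holds with an $O(1)$ constant) and the annular pieces $E_i'=E_i\cap B(o,R)$, establishing on each $E_i'$ the annular Poincar\'e inequality $\int_{E_i'}|f-f_{E_i'}|^2\,d\mu\lesssim R^2\int_{E_i'}|\nabla f|^2\,d\mu$ (a lower-order contribution, since $R^2\lesssim V_i(R)h_i(R)$ by (\ref{hi>})) together with a one-dimensional radial estimate relating the reservoir averages $f_{E_i'}$ and the central average $f_{K'}$ through the $h_i$, and finally feeding the resulting reduced, finite-dimensional problem in $(f_{K'},f_{E_1'},\dots,f_{E_k'})$ into the network inequality.

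The main obstacle is that the heat kernel estimates powering the Kusuoka--Stroock argument --- both the Li--Yau-type estimates on the individual ends $M_i$ and the resulting estimates for $p_{B(o,R)}$ on the central balls --- are, in the present technology, established under the polynomial-type controls packaged into (COE); removing them for wild ends, so that the capacity/heat-kernel relation $\mathrm{cap}_i\simeq h_i^{-1}$ and its parabolic refinements hold with the required uniformity, is the genuine difficulty, and this is exactly what (COE) was introduced to bypass. A secondary, more bookkeeping-type issue is that, as $R$ varies, the ordering of the finitely many quantities $V_i(R)h_i(R)$ --- hence the identity of $m(R)$ and $n(R)$ --- can change repeatedly; since each $V_ih_i$ is doubling this is harmless in isolation, but it must be propagated uniformly through the transition-region estimates and the network reduction. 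I expect the first point to be the real crux.
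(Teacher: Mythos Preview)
This statement is presented in the paper as a \emph{conjecture}, not a theorem: the authors write that they ``state the target result here as a conjecture'' and give no proof. So there is no paper proof to compare your proposal against.

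Your proposal is not a proof either, and you are aware of this: you call it a ``possible route'' and explicitly flag the main obstacle at the end. Read as a roadmap, your discussion is accurate. You are right that the Whitney-covering machinery of Section~\ref{section Whitney} (your Stage~(B)) uses only (VD) on each end, so the entire content of the conjecture sits in Stage~(A); and your identification of the heat-kernel input --- specifically the on-diagonal estimate of Theorem~\ref{main heat} feeding Lemma~\ref{Dirichlet} --- as the place where (COE) is actually consumed matches the paper's own diagnosis. Indeed, the paper states a parallel heat-kernel conjecture (Conjecture~\ref{conj heat}); proving that would, via the argument of Section~\ref{SecUBPoincare}, essentially yield the present one.

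One caution about your truncation argument. For each fixed $r$ you construct a different manifold $\widetilde M=\widetilde M(r)$, and Theorem~\ref{main parab} applied to $\widetilde M(r)$ yields $\Lambda(B(o,r))\le C(\widetilde M(r))\,V_n(r)h_n(r)$ with a constant depending on the (VD), (PI), (RCA) and (COE) data of $\widetilde M(r)$. To obtain the conjectured uniform $\lesssim$ you must choose the truncations so that these constants are bounded uniformly in $r$; this is plausible (the pure-power tails can be drawn from a bounded family, since each $V_i$ has bounded doubling exponent), but it is a point that must be checked, not assumed. Note also that Theorem~\ref{main parab} orders the ends by $V_i$ while the conjecture orders by $V_ih_i$; under (COE) these orderings agree by Lemma~\ref{Lem2}(e), so your reduction is internally consistent, but the reconciliation should be made explicit. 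Beyond these bookkeeping issues, the residual ``wild end'' case is, as you say, genuinely open --- which is precisely why the paper leaves the statement as a conjecture.
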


Obviously, Theorems \ref{main non-parab} and \ref{main parab} support this
conjecture. A typical case that is not covered by these theorems is when $%
M=M_{1}\#M_{2}$ where both volume functions $V_{1},V_{2}$ are close to
the critical case $V(r)=r^{2}$ but not ordered in the sense of $\gtrsim $.
See \cite{GISfluc} for such examples.

\subsection{Heat kernel estimates on manifolds with  nice ends}

Our strategy of the proof of Theorems \ref{main non-parab} and \ref{main
parab} is inspired by the argument of Kusuoka-Stroock \cite{KS} (see also 
\cite{SC LNS}), where (PI) was deduced from the heat kernel
estimates (\ref{LY}).

Let $M=\#_{i\in I}M_{i}$ be a manifold with nice ends as above. We obtain the
estimates of the Poincar\'{e} constant on $M$ by using heat kernel bounds on 
$M$. In the case when $M$ is non-parabolic, matching upper and lower
estimates of the heat kernel on $M$ were obtained in \cite{G-SC ends}. To
state this result, let us introduce the following notation: 
\[
\widetilde{V}_{i}:=V_{i}h_{i}^{2},
\]%
where $V_{i}\left( r\right) $ and $h_{i}\left( r\right) $ were defined in (%
\ref{Vi}) and (\ref{hi}), respectively.

\begin{theorem}
\label{non-parabolic} \label{G-SC on diagonal}(\cite[Cor. 6.8]{G-SC ends}, 
\cite{G-SC FK}) Let $M=\#_{i\in I}M_{i}$ be a manifold with nice  ends
 where  each parabolic $M_{i}$ satisfies 
{\rm{(RCA)}}. If $M$ is non-parabolic then, for all $t>0$, we have 
\[
p(t,o,o)\simeq \frac{1}{\min_{i\in I}\widetilde{V}_{i}(\sqrt{t})}.
\]
\end{theorem}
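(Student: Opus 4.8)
The plan is to reduce the on-diagonal heat kernel estimate on $M=\#_{i\in I}M_i$ to the known estimates on the individual ends, using the gluing technique from \cite{G-SC ends}. The central heuristic is that for $t\gg 1$, the mass $p(t,o,o)$ is governed by the end through which Brownian motion is most likely to ``escape and return'', and since $M$ is non-parabolic, the relevant quantity measuring the effective volume seen from the central region along end $M_i$ is precisely $\widetilde V_i(\sqrt t)=V_i(\sqrt t)h_i(\sqrt t)^2$. The smallest such quantity dominates because the return probability is largest along the end offering the least ``room'' to diffuse away.

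First I would recall the setup of \cite[Cor.~6.8]{G-SC ends}: one fixes the compact set $K$, writes $M\setminus K=\bigsqcup_i E_i$, and compares the heat kernel on $M$ with Dirichlet/Neumann heat kernels on the ends $M_i$ and on a bounded ``central'' piece. The upper bound proceeds by the standard chain: (i) an on-diagonal upper bound $p(t,o,o)\lesssim 1/\Phi(t)$ follows from a Faulkner--Nash-type or Faber--Krahn inequality on $M$, where $\Phi(t)\simeq \min_i\widetilde V_i(\sqrt t)$; this is exactly the content of \cite{G-SC FK}, which translates the volume/capacity profile of the glued manifold into a Faber--Krahn function, and here one checks that the Faber--Krahn function of $M$ near the central point is the minimum of the contributions $\widetilde V_i$ coming from each end. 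The lower bound comes from the parabolic Harnack inequality valid separately on each nice end (and on the central compact piece), chained along a path from $o$ into the end $M_{i_0}$ realizing the minimum: one uses $p(t,o,o)\gtrsim p(t/2, o, z)^2 \mu(\text{small ball around }z)$ type estimates together with the Li--Yau bound on $M_{i_0}$ to get $p(t,o,o)\gtrsim 1/\widetilde V_{i_0}(\sqrt t)$, which is $1/\min_i\widetilde V_i(\sqrt t)$.

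The role of (RCA) on the parabolic ends is to guarantee that the relevant annuli are connected, so that the Harnack chaining argument and the capacity estimates behind $h_i$ behave as on model manifolds; without it the function $h_i$ might not capture the true resistance to infinity along that end. I would also note that non-parabolicity of $M$ (equivalently, at least one end is non-parabolic, so that $\min_i h_i\simeq 1$ for that end, forcing $\min_i\widetilde V_i\lesssim V_{i_0}$ for a non-parabolic $i_0$) is what ensures the Green function exists and the gluing formulas of \cite{G-SC ends} apply in the stated clean form.

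The main obstacle — and the reason this is merely a citation-plus-verification rather than a from-scratch proof — is checking that the earlier results of \cite{G-SC ends} and \cite{G-SC FK}, originally stated under slightly more restrictive hypotheses on the ends, still apply to the present ``nice + (RCA) for parabolic ends'' class. Concretely, one must verify that (VD)+(PI) on each $M_i$ plus (RCA) on the parabolic ones suffices to run the Faber--Krahn estimate and the Harnack chaining uniformly, with the constants depending only on the structural constants of the ends and on $K$. I would isolate this as the technical heart: confirm the hypotheses of \cite[Cor.~6.8]{G-SC ends} are met, invoke it to get $p(t,o,o)\simeq 1/\min_i\widetilde V_i(\sqrt t)$, and then the theorem follows immediately.
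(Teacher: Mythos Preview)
Your proposal is appropriate: the paper does not supply its own proof of this theorem but simply cites it as a known result from \cite[Cor.~6.8]{G-SC ends} and \cite{G-SC FK}, so your plan of reducing to those references and verifying that the present hypotheses (nice ends, (RCA) on parabolic ends, non-parabolicity of $M$) match those of the cited results is exactly what is called for. Your sketch of the underlying mechanism (Faber--Krahn for the upper bound, Harnack chaining into the minimizing end for the lower bound) is consistent with the methods of the cited papers.
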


In particular, when all $M_{i}$ are non-parabolic then $\widetilde{V_{i}}%
\simeq V_{i}$ and we obtain%
\[
p(t,o,o)\simeq \frac{1}{\min_{i\in I}V_{i}(\sqrt{t})},
\]%
that is, $p\left( t,o,o\right) $ is determined by the end with the smallest
volume function.

In the case when $M$ is parabolic, similar estimates were obtained in \cite%
{GIS}, however, with certain restriction on the volume functions $%
V_{i}\left( r\right) $ near the critical case $V_{i}(r)\simeq r^{2}$. For
example, the result of \cite{GIS} includes $V_{i}\left( r\right) \simeq
r^{\alpha }$ with $\alpha \leq 2$ but does not include $V_{i}\left( r\right)
\simeq r^{2}\log r$ or $r^{2}/\log r$.

In this paper, we prove matching upper and lower heat kernel bounds on
parabolic manifolds with ends under weaker restrictions than in \cite{GIS}.

\begin{definition}
\label{DefRegular}A parabolic end $M_{i}$ is called \emph{regular} if it
satisfies (\ref{middle}) with positive exponents $\gamma _{1}$ and $\gamma
_{2}$ such that $2\gamma _{1}+\gamma _{2}<2,$ that is, 
\begin{equation}
c\left( \frac{R}{r}\right) ^{2-\gamma _{2}}\leq \frac{V_{i}(R)}{V_{i}(r)}%
\leq C\left( \frac{R}{r}\right) ^{2+\gamma _{1}}\ \ \mbox{for all }1\leq
r\leq R.  \label{regular}
\end{equation}
\end{definition}

For example, if $M$ satisfies (COE) (see Definition \ref%
{DefCOE}) then any $M_{i}$ with $i\in I_{middle}$ is regular.

\begin{definition}[DOE]
\label{DOE}We say that $M=\#_{i\in I}M_{i}$ has an end that dominates in the
order\emph{\ }(shortly {\rm{(DOE)}}) if there exists $l \in I$ such that, for all $%
i\in I$ 
\begin{equation}
V_{l }\gtrsim V_{i}~\mbox{ and }~\widetilde{V}_{l }\lesssim \widetilde{V}_{i}.
\label{DOE2}
\end{equation}
\end{definition}
\begin{remark}
If a manifold $M=\#_{i \in I} M_i$ satisfies {\rm{(COE)}} with $I_{super}=\emptyset$ and 
$I_{middle}\neq \emptyset$, then $M$ admits {\rm{(DOE)}} with $l \in I_{middle}$ and
 for all $r\gg 1$
\[
V_{l} (r) \simeq V_m(r),
\]
where $m=m(r)$ is the index of the largest end defined in (\ref{largest end}). 
See Lemma \ref{Lem2} for details.
\end{remark}
The next theorem is our main result regarding heat kernel estimates.

\begin{theorem}
\label{main heat} \label{main1}\label{main} Let $M=\#_{i\in I}M_{i}$ be a
manifold with parabolic nice ends, where each end satisfies {\rm{(RCA)}}
and is regular or subcritical. 
If  there exists at least one non-subcritical regular end, 
assume also that $M$ admits {\rm{(DOE)}}.
Then, for all $t>0$, 
\begin{equation}
p(t,o,o)\simeq  \frac{1}{V_m(\sqrt{t})},
\label{ptoo}
\end{equation}
where $m$ is defined in (\ref{largest end}). 
\end{theorem}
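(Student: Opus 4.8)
The plan is to establish the on-diagonal heat kernel estimate (\ref{ptoo}) by the standard dichotomy between upper and lower bounds, treating each end type with the tools already developed for manifolds with ends. First I would recall that on any manifold with nice ends the heat kernel $p(t,o,o)$ admits a gluing formula: up to constants, it behaves like the maximum over $i\in I$ of the central heat kernels $p_i(t,o_i,o_i)$ corrected by the ``capacity'' or ``h-function'' factors coming from the other ends (this is the mechanism behind Theorem~\ref{non-parabolic}). Since each nice end $M_i$ satisfies (VD)+(PI), the Li--Yau estimate (\ref{LY}) gives $p_i(t,o_i,o_i)\simeq 1/V_i(\sqrt t)$, and the (RCA) hypothesis is exactly what is needed to run the parabolicity/capacity comparison arguments of \cite{G-SC ends}, \cite{GIS} on each end. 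The goal $1/V_m(\sqrt t)$ says that the dominant contribution is the \emph{largest} end --- which is the expected behaviour for a parabolic manifold, in contrast with the non-parabolic case where $\widetilde V_i=V_ih_i^2$ and the minimum appears.

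Next I would split $I=I_{reg}\sqcup I_{sub}$ according to whether an end is regular or subcritical. For the \textbf{upper bound} $p(t,o,o)\lesssim 1/V_m(\sqrt t)$: the contribution of any single end is at most $c/V_i(\sqrt t)\le c/V_m(\sqrt t)$ since $V_m$ is the largest volume function at scale $\sqrt t$; the subtlety is that the gluing formula multiplies some end contributions by growing factors $h_j(\sqrt t)^{2}$ of the \emph{other} ends, so one must check these never overcome the gap $V_m/V_i$. This is where (DOE) enters when there is a non-subcritical regular end: the dominating index $\ell$ has $V_\ell\gtrsim V_i$ and $\widetilde V_\ell\lesssim\widetilde V_i$ simultaneously, which pins the value of the gluing minimum to $1/V_\ell\simeq 1/V_m$ (using the Remark relating $V_\ell$ and $V_m$ under (COE)-type hypotheses). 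If every end is subcritical, then $h_i(r)\simeq r^2/V_i(r)$, so $\widetilde V_i=V_ih_i^2\simeq r^4/V_i(r)$, and minimizing $\widetilde V_i$ is the same as maximizing $V_i$, i.e. $\min_i\widetilde V_i(\sqrt t)\simeq t^2/V_m(\sqrt t)$ --- but here I would need the sharper gluing formula valid for parabolic manifolds (from \cite{GIS}, extended as announced in the paper) which produces $1/V_m$ rather than $1/\widetilde V_{\min}$; the point is that for parabolic ends the relevant quantity is genuinely $V_i$, not $\widetilde V_i$, because the Green function is infinite and the capacity estimates scale differently.

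For the \textbf{lower bound} $p(t,o,o)\gtrsim 1/V_m(\sqrt t)$ I would use the general principle that the heat kernel on $M$ is bounded below by the Dirichlet heat kernel of the end $M_m$ realizing the largest volume, localized to a remote ball inside $E_m$, plus a central correction; combined with the Li--Yau lower bound on $M_m$ and a chaining argument through the compact core $K$ (this is where (RCA) on the parabolic ends is used again, to connect annuli and propagate the lower bound to the reference point $o$). Alternatively, the lower bound follows from a Faber--Krahn / mean-value inequality on central balls as in \cite{G-SC FK}. The \textbf{main obstacle} I anticipate is the upper bound in the mixed case: verifying that when regular (non-subcritical) ends and subcritical ends coexist, the $h_j^2$-factors from subcritical ends --- which can be as large as $(\sqrt t)^4/V_j(\sqrt t)$, a genuinely growing power --- do not spoil the bound $1/V_m$; this is precisely what the structural hypotheses (regular-or-subcritical for every end, plus (DOE) when a non-subcritical regular end is present) are designed to rule out, and making that quantitative will require carefully tracking the exponents $\gamma_1,\gamma_2$ in (\ref{regular}) and the subcriticality exponent $\delta$ through the gluing estimate, re-deriving the relevant parts of \cite{GIS} under the weaker regularity assumption stated in Definition~\ref{DefRegular}.
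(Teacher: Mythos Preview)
Your proposal has a genuine gap in the upper bound, and over-complicates the lower bound.

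\textbf{Upper bound.} You invoke a ``gluing formula'' expressing $p(t,o,o)$ as a combination of the end heat kernels $p_i(t,o_i,o_i)$ with $h$-corrections, citing the mechanism behind Theorem~\ref{non-parabolic}. But the off-diagonal gluing formula (Theorem~\ref{off-diagonal}) takes $p(t,o,o)$ as an \emph{input}, not an output; and the Nash/Faber--Krahn approach that yields Theorem~\ref{non-parabolic} is explicitly noted in the paper to be non-optimal in the parabolic case. The actual tool is the \emph{integrated resolvent method}: one controls $p(t,o,o)$ by $\dot{\gamma}_{1/t}(x)=\partial_\lambda G_\lambda 1_K$ via the inequality $p(t,o,o)\le Ct^{-2}\dot{\gamma}_{1/t}(x)$, and then bounds $\dot{\gamma}_\lambda$ through the comparison inequalities (Lemmas~4.9--4.10 of \cite{GIS}) relating it to the end-by-end quantities $\Phi_\lambda^{E_i}$ and $\Psi_\lambda^{E_i}$. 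The regularity/subcriticality hypotheses enter precisely in bounding $\Psi_\lambda^{K^c}$ (Lemma~\ref{lemma Psi}); this yields the clean estimate $p(t,o,o)\lesssim \min_i h_i^2(\sqrt t)\big/\min_i\widetilde V_i(\sqrt t)$, after which the case split (all subcritical vs.\ (DOE)) collapses the right-hand side to $1/V_m(\sqrt t)$ by a short algebraic computation. Your plan to ``track $\gamma_1,\gamma_2,\delta$ through the gluing estimate'' does not name this mechanism and, as written, is circular.

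\textbf{Lower bound.} Your route via Dirichlet heat kernels on $E_m$ plus chaining through $K$ can be made to work, but the paper's argument is far shorter: once $p(t,o,o)\lesssim 1/V(o,\sqrt t)$ is established and $V(o,\cdot)\simeq V_m(\cdot)$ is doubling, the matching lower bound $p(t,o,o)\gtrsim 1/V(o,\sqrt t)$ follows immediately from the general result of Coulhon--Grigor'yan \cite{Coulhon-Grigoryan}. No end-specific lower bound or (RCA)-based chaining is needed at this step.
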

Hence, in the situation covered by this theorem, the long time behavior of
the heat kernel is determined by the \emph{largest} volume function, in
contrast to the case of non-parabolic ends where, as we have seen above, $%
p\left( t,o,o\right) $ is determined by the \emph{smallest} volume function.
The estimate (\ref{ptoo}) implies sharp matching upper and lower bounds for $%
p\left( t,x,y\right) $ for all $x,y\in M$ by means of the gluing techniques
of \cite{G-SC ends} (see also Theorem \ref{off-diagonal} below). 
\begin{example}
\label{example heat}
Let $M=\#_{i \in I} M_i$ be a manifold with nice ends, where each parabolic $M_i$ satisfies {\rm{(RCA)}}. 
Assume that the volume growth function $V_i(r)$ of $M_i$ satisfies 
for some $\alpha_i >0$ 
\[
V_i(r) \simeq r^{\alpha_i}, ~~(i \in I, ~ r \gg 1).
\]
Then Theorems \ref{non-parabolic} and \ref{main heat} imply that  for all $t>0$
\[
p(t, o, o) \simeq \frac{1}{t^{\alpha/2}},
\]
where $\alpha =2+\min_{i\in I}|\alpha_i-2|$ so that 
$p(t,o,o)$ is determined by $\alpha_i$ nearest to $2$, unlike the Poincar\'e constant 
$\Lambda (B(o,r))$.
\end{example}

\begin{example}Let $M=\#_{i \in I} M_i$ be as above.
Assume that $V_i(r)$ satisfies for some $\beta_i  \in \mathbb{R}$
\[
V_i(r) \simeq r^2 \left( \log r \right)^{\beta_i}, ~~(i \in I, ~ r \gg 1).
\]
Then Theorems \ref{non-parabolic} and \ref{main heat} imply that  for all $t>0$
\[
p(t, o, o) \simeq 
\left\{
\begin{array}{ll} 
\frac{1}{t \left( \log t \right)^{\beta}} & \mbox{if } \beta_i \neq 1 \mbox{ for all } i \in I , \\
\frac{1}{t \left( \log t \right) \left( \log \log t \right)^2} &\mbox{if }
 \beta_i=1\mbox{ for some } i \in I ,
\end{array}
\right.
\]
where $\beta =1+\min_{i \in  I}|\beta_i-1|$ so that 
$p(t,o,o)$ is determined by $\beta_i$ nearest to $1$.
\end{example}

It seems that the condition (DOE)  in Theorem \ref{main heat} is technical and 
we conjecture, that in general the following is true:
\begin{conjecture}
\label{conj heat} Let $M=\#_{i\in I}M_{i}$ be a manifold with  nice ends, 
and each parabolic end also satisfies 
{\rm{(RCA)}}. Then, for all $t>0$, we have 
\begin{equation}
p(t,o,o)\simeq \frac{\min_{i\in I}h_{i}^{2}(\sqrt{t})}{\min_{i\in I}%
\widetilde{V}_{i}(\sqrt{t})}.  \label{heat conj}
\end{equation}
\end{conjecture}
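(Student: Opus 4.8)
A natural line of attack on (\ref{heat conj}) is the gluing method of \cite{G-SC ends}, as refined in \cite{GIS}, carried out symmetrically in the ends rather than under a hypothesis that forces a single end to dominate. Fix a smooth precompact $\Omega\supset K$ so that $M\setminus\Omega$ is the disjoint union of ends $E_i'\subset E_i$, and let $p_i$ denote the Dirichlet heat kernel of $M_i\setminus K_i$. Applying the strong Markov property to the diffusion started at $o$ at its successive exits from and returns to $\Omega$ yields a renewal-type representation of $p(t,o,o)$ through the kernels $p_i$, the Dirichlet heat kernel of the bounded region $\Omega$, and harmonic-measure type transition kernels on $\partial\Omega$; passing to Laplace transforms turns this into an algebraic identity for the $\lambda$-resolvent $G_\lambda(o,o)=\int_0^\infty e^{-\lambda t}p(t,o,o)\,dt$ (the parabolic analogue of the Green-function computation in \cite{G-SC ends}). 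The input for a single end is the Li-Yau estimate (\ref{LY}) on $M_i$, supplemented by {\rm{(RCA)}} in the parabolic case: it controls $p_i$ and the excursion statistics into $E_i$, and in particular gives $\int_1^t p(s,o_i,o_i)\,ds\simeq h_i(\sqrt t)$ on $M_i$ and survival/return probabilities of order $1/h_i(\sqrt t)$ for an excursion into $E_i$. Writing $\widetilde{V}_i=V_ih_i^2$ is exactly what puts these single-end quantities into the shape appearing in (\ref{heat conj}); this is already the mechanism behind Theorem \ref{non-parabolic}.

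The heuristic that makes (\ref{heat conj}) plausible is that the return density at $o$ is governed by the end in which excursions are simultaneously most frequent and shortest-lived. Organizing the Laplace bookkeeping of the renewal identity should give
\[
G_\lambda(o,o)\;\simeq\;\frac{1}{\lambda}\cdot\frac{\min_{i\in I}h_i^2(1/\sqrt{\lambda})}{\min_{i\in I}\widetilde{V}_i(1/\sqrt{\lambda})},
\]
after which a Karamata-Tauberian argument --- legitimate because, by {\rm{(VD)}}, each $V_i$ is regularly varying up to slowly varying factors --- transfers this to (\ref{heat conj}). The two halves would then be established by the usual dichotomy: a lower bound for $p(t,o,o)$ obtained from a subsolution, or by restricting the identity $p(t,o,o)=\|p(t/2,o,\cdot)\|_2^2$ to a well-chosen region inside the ends with largest $\widetilde{V}_i$, the factor $\min_i h_i^2$ reflecting the parabolic trapping near $K$; and an upper bound from a Faber-Krahn (or weighted Nash) inequality for $M$ whose effective volume profile at scale $r$ is $\min_i V_i(r)$, the parabolic corrections promoting it to the time-integrated profile $\min_i\widetilde{V}_i$ and the slowly varying weight $\min_i h_i^2$. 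When one end dominates, both bounds collapse to a single inequality about that end, and one recovers Theorems \ref{non-parabolic} and \ref{main heat}.

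The main obstacle --- and the reason (\ref{heat conj}) is stated only as a conjecture --- is the non-dominated case: $M=M_1\#M_2$ with $V_1,V_2$ both comparable to $r^2$ up to slowly varying factors but not $\gtrsim$-comparable, so that the index achieving $\min_i\widetilde{V}_i(\sqrt t)$ and the one achieving $\min_i h_i^2(\sqrt t)$ oscillate, possibly out of phase, as $t\to\infty$. Then the renewal/resolvent expansion genuinely couples the two ends, the one-end estimates no longer control it uniformly, the quotient $\min_i h_i^2/\min_i\widetilde{V}_i$ need not be regularly varying so the Tauberian step breaks down, and the upper bound cannot be reduced to a single Faber-Krahn inequality. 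Removing the hypotheses {\rm{(DOE)}}/{\rm{(COE)}} thus appears to require a genuinely coupled two-end heat-kernel estimate that is not currently available; see \cite{GISfluc} for the fluctuation phenomena that make this delicate.
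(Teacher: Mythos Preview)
The statement is a \emph{conjecture}: the paper does not prove it and offers no argument beyond the surrounding remarks. Your proposal is, appropriately, not a proof either but a heuristic outline together with an explanation of the obstruction. On that level you are in agreement with the paper: the difficult case is $M=M_1\#M_2$ with both $V_i$ close to $r^2$ but not $\gtrsim$-comparable, exactly as the paper notes in the remark at the end of Section~\ref{SecProofTmain} (and as treated in \cite{GISfluc}).

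Two points in your heuristic deserve correction. First, your justification of the Tauberian step --- ``legitimate because, by {\rm{(VD)}}, each $V_i$ is regularly varying up to slowly varying factors'' --- is wrong: {\rm{(VD)}} only gives polynomial doubling bounds $\big(R/r\big)^\theta\lesssim V_i(R)/V_i(r)\lesssim\big(R/r\big)^\Theta$, which is strictly weaker than regular variation, and indeed the fluctuating examples of \cite{GISfluc} are doubling but not regularly varying. This is not a side issue: it is precisely the mechanism that blocks the argument you sketch. Second, your proposed route to the upper bound via a Faber--Krahn/Nash inequality is not what the paper actually does for the partial result it \emph{does} prove: Proposition~\ref{prop min min} establishes the upper half of (\ref{heat conj}) under the extra hypothesis that each parabolic end is regular or subcritical, and the method is the integrated-resolvent comparison of \cite{GIS} (estimating $\gamma_\lambda$ and $\dot\gamma_\lambda$ via $\Phi_\lambda^{K^c}$ and $\Psi_\lambda^{K^c}$ on each end), not a global Faber--Krahn inequality. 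So if you want to sharpen the heuristic into a program, the upper bound is closer to hand than you suggest, by that route; the genuine open problem is the matching lower bound without {\rm{(DOE)}}.
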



\begin{remark}Regarding condition {\rm{(DOE)}}, observe that
 the first condition $V_{l }\gtrsim V_{i}$ in (\ref{DOE2}) does
not imply in general the second condition $\widetilde{V}_{l }\lesssim 
\widetilde{V}_{i}$. However, if all ends are regular and there exists
 $\eta \geq 2\gamma _{1}$ such that for all $i$ 
\begin{equation}
\frac{V_{l }(r)}{V_{i}(r)}\geq Cr^{\eta }~~\mbox{for all }r\geq 1,
\label{delta}
\end{equation}%
then $\widetilde{V}_{l }\lesssim \widetilde{V}_{i}$ is satisfied. Indeed, by
the regularity of $V_{l }$ and  (\ref{hi}), we obtain 
\[
h_{l }(r)=1+\frac{1}{V_{l }\left( r\right) }\int_{1}^{r}\frac{V_{l }\left(
r\right) }{V_{l }\left( s\right) }sds\lesssim 1+\frac{1}{V_{l }\left( r\right) 
}\int_{1}^{r}\left( \frac{r}{s}\right) ^{2+\gamma _{1}}sds\lesssim \frac{%
r^{2+\gamma _{1}}}{V_{l }\left( r\right) },
\]%
which implies%
\[
\widetilde{V}_{l }(r)=V_{l}\left( r\right) h_{l }^{2}\left( r\right) \lesssim 
\frac{r^{4+2\gamma _{1}}}{V_{l }(r)}.
\]%
Since $h_{i}(r)\gtrsim \frac{r^{2}}{V_{i}(r)}$, we obtain 
\[
\widetilde{V}_{i}(r)=V_{i}\left( r\right) h_{i}^{2}\left( r\right) \gtrsim 
\frac{r^{4}}{V_{i}(r)}.
\]%
Hence, (\ref{delta}) implies that 
\[
\frac{\widetilde{V}_{l }(r)}{\widetilde{V}_{i}(r)}\lesssim \frac{V_{i}(r)}{%
V_{l }(r)}r^{2\gamma _{1}}\lesssim r^{2\gamma _{1}-\eta }\lesssim 1,
\]%
as was claimed.
\end{remark}

\subsection{Structure of the paper}

In Section \ref{SecExamples} we give examples of application of Theorems \ref%
{main non-parab} and \ref{main parab} in the case when all ends are model
manifolds.

In Section \ref{SecHK}, we first survey the previous results of \cite{G-SC
ends} and \cite{GIS} on heat kernel estimates on manifolds with ends and
then prove Theorem \ref{main1}.

%
In Section \ref{SecPI} we give the proofs of Theorems \ref{main non-parab}
and \ref{main parab}. For that, we first obtain in Section \ref%
{SecLBDirichlet} a lower bound for the Dirichlet heat kernel in balls (Lemma %
\ref{Dirichlet}) using the two-sided off-diagonal bounds of the heat kernel $%
p\left( t,x,y\right) $ on $M$ that follow from Theorem \ref{main1}. By means
of this estimate, we obtain in Section \ref{SecUBPoincare} an upper bound
for $\Lambda (B(o,r),B(o,\kappa r))$ for some large enough $\kappa >1$. 
We use an additional argument to reduce $ \kappa $ to $1$ stated in
Section \ref{section Whitney}. The lower bound for $\Lambda (B(o,r))$ is proved in Section %
\ref{SecLBPoincare}. 

In Section \ref{section Whitney}, we obtain a general upper bound of the Poincar\'e 
constant $\Lambda(B(o,r))$ by using a collection of $\Lambda(B(x,s), B(x, \kappa s))$, 
where $B(x,s) \subset B(o,r)$ and $\kappa \geq 1$.

\section{Model manifold and examples}

\label{SecExamples}
\setcounter{equation}{0}
 Let $\psi $ be a smooth positive
function on $\left( 0,+\infty \right) $ such that such $\psi \left( r\right)
=r$ for $r<1$. Fix an integer $N\geq 2$ and consider in $\mathbb{R}^{N}$ a
Riemannian metric $g_{\psi }$ given in the polar coordinates $(r,\theta )\in
(0,\infty )\times \mathbb{S}^{N-1}$ by 
\[
g_{\psi }=dr^{2}+\psi (r)^{2}d\theta ^{2}.
\]%
In a punctured neighborhood of the origin $o\in \mathbb{R}^{N}$, this metric
coincides with the Euclidean one and, hence, extends to the full
neighborhood of $o$, so that $g_{\psi }$ is defined on the entire $\mathbb{R}%
^{N}$. The Riemannian manifold $\left( \mathbb{R}^{N},g_{\psi }\right) $ is
called a model manifold. For example, if $\psi \left( r\right) =r$ then $%
g_{\psi }$ is the canonical Euclidean metric.

Let us assume in addition that, for some $C>0$ and all $r\gg 1$,%
\begin{equation}
\left\{ 
\begin{array}{l}
\sup\limits_{\lbrack r,2r]}\psi \leq C\inf\limits_{[r,2r]}\psi , \\ 
\psi (r)\leq Cr, \\ 
\displaystyle{\int_{0}^{r}}\psi ^{N-1}(s)ds\leq Cr\psi ^{N-1}(r).%
\end{array}%
\right.  \label{psi3}
\end{equation}%
Since for any bounded range $r\in \left( 0,r_{0}\right) $ the condition (\ref%
{psi3}) is trivially satisfied, we obtain by \cite[Prop. 4.10]{G-SC
stability} that $\left( \mathbb{R}^{N},g_{\psi }\right) $ satisfies (PI)
and (VD). It is also obvious that $\left( \mathbb{R}^{N},g_{\psi }\right) $
satisfies (RCA) with respect to the origin $o$ because the geodesic balls $%
B\left( o,r\right) $ coincide with the Euclidean balls.

Let us reformulate conditions (\ref{psi3}) in terms of the volume $V\left(
r\right) $ of a ball $B(o,r)$ on $\left( \mathbb{R}^{N},g_{\psi }\right) $:%
\begin{equation}
V(r)=V\left( o,r\right) =\omega _{N}\int_{0}^{r}\psi ^{N-1}(s)ds.
\label{V(r)}
\end{equation}

\begin{lemma}
\label{Lempsi3}The conditions (\ref{psi3}) are equivalent to the following
conditions to be satisfied for all $r\gg 1$:
\begin{equation}
\begin{array}{l}
V(r)\leq Cr^{N} \\ 
V\left( r\right) \simeq rV^{\prime }\left( r\right) .%
\end{array}
\label{V2}
\end{equation}
\end{lemma}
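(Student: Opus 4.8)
The plan is to prove the two implications separately, working throughout with the identity (\ref{V(r)}), so that $V'(r)=\omega_N\psi^{N-1}(r)$, $V$ is $C^1$ and strictly increasing, the third line of (\ref{psi3}) reads literally $V(r)\le CrV'(r)$, and the second line of (\ref{V2}) is the two-sided comparison $V(r)\simeq rV'(r)$.

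For the direction (\ref{psi3})$\Rightarrow$(\ref{V2}): the inequality $V(r)\le CrV'(r)$ is exactly the third line of (\ref{psi3}) (the factors $\omega_N$ cancel), so it remains to produce the reverse inequality $rV'(r)\lesssim V(r)$ together with the bound $V(r)\le Cr^N$. For the former I would discard all of the integral defining $V(r)$ except the piece over $[r/2,r]$ and apply the first line of (\ref{psi3}) on that interval (valid since $r/2\gg 1$) to get $\psi(s)\ge c\,\psi(r)$ for $s\in[r/2,r]$; then $V(r)\ge\omega_N\int_{r/2}^r\psi^{N-1}\ge c' r\,\psi^{N-1}(r)=c'' rV'(r)$. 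For the latter I would first upgrade the second line of (\ref{psi3}) to $\psi(s)\le Cs$ for \emph{all} $s>0$ (using $\psi(s)=s$ for $s<1$ and continuity of $\psi$ on the compact remaining range), and then integrate: $V(r)=\omega_N\int_0^r\psi^{N-1}(s)\,ds\le\omega_N\int_0^r(Cs)^{N-1}\,ds\lesssim r^N$.

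For the converse (\ref{V2})$\Rightarrow$(\ref{psi3}): the third line of (\ref{psi3}) is immediate from $V(r)\le c_2 rV'(r)$, which is half of $V\simeq rV'$. For the second line, $V'(r)\simeq V(r)/r\le Cr^{N-1}$ by (\ref{V2}), whence $\psi(r)=(V'(r)/\omega_N)^{1/(N-1)}\lesssim r$. The first line of (\ref{psi3}) is the only step requiring a small idea: from $V\simeq rV'$ we have $(\log V)'(r)=V'(r)/V(r)\asymp 1/r$, so integrating over $[r,2r]$ gives $\log(V(2r)/V(r))\asymp\log 2$, i.e. $V$ is both doubling and reverse-doubling. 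Then for $s\in[r,2r]$ monotonicity of $V$ gives $V(s)\simeq V(r)$, hence $\psi^{N-1}(s)=V'(s)/\omega_N\simeq V(s)/s\simeq V(r)/r$ uniformly in $s\in[r,2r]$; taking $(N-1)$-th roots yields $\sup_{[r,2r]}\psi\le C\inf_{[r,2r]}\psi$.

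The main obstacle, to the extent there is one, is exactly this last point: converting the infinitesimal statement $V\simeq rV'$ into a dyadic oscillation bound for $\psi$, handled cleanly by the logarithmic-derivative computation above. Everything else is bookkeeping with the definitions, plus the standard care near $r\sim 1$, since (\ref{psi3}) and (\ref{V2}) are only asserted for $r\gg 1$ whereas the integral defining $V$ runs from $0$; this is absorbed by noting that a bounded initial contribution to $V(r)$ is negligible against $r^N$ for large $r$.
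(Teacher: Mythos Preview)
Your proof is correct and follows essentially the same route as the paper's: in both directions you use the identifications $V'=\omega_N\psi^{N-1}$ and the third line of (\ref{psi3}) as $V\le CrV'$, obtain $rV'\lesssim V$ by restricting the integral to $[r/2,r]$, and for the converse derive the dyadic oscillation bound on $\psi$ by integrating $(\log V)'\asymp 1/r$ to get doubling of $V$ and then combining with $V'\simeq V/r$. The only cosmetic difference is that the paper phrases the last step as ``it suffices to prove (VD) for $V$'' before carrying out the same logarithmic-derivative computation.
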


\begin{proof}
The inequality $V\left( r\right) \leq Cr^{N}$ follows from $\psi \left(
r\right) \leq Cr$ and (\ref{V(r)}). The third condition in (\ref{psi3}) is
equivalent to%
\[
V\left( r\right) \leq CrV^{\prime }\left( r\right)
\]%
that is the upper bound in the second condition in (\ref{V2}). To obtain a
similar lower bound, observe that the first condition in (\ref{psi3}) is
equivalent to%
\begin{equation}
V^{\prime }\left( s\right) \simeq V^{\prime }\left( r\right) \ \ \mbox{for
all }s\in \left[ \frac{1}{2}r,r\right] ,  \label{V'sr}
\end{equation}%
which implies%
\[
V\left( r\right) \geq \int_{r/2}^{r}V^{\prime }\left( s\right) ds\simeq
rV^{\prime }\left( r\right) .
\]%
Let us now prove that (\ref{V2}) implies (\ref{psi3}). Clearly, the third
condition in (\ref{psi3}) follows from 
\begin{equation}
V\left( r\right) \simeq rV^{\prime }\left( r\right) .  \label{VV'}
\end{equation}%
The conditions (\ref{V2}) imply the second condition in (\ref{psi3}) as
follows: 
\begin{equation}
\psi \left( r\right) ^{N-1}=\omega _{N}^{-1}V^{\prime }\left( r\right)
\simeq \frac{V\left( r\right) }{r}\leq Cr^{N-1}.  \label{psi cond}
\end{equation}%
Finally, let us prove the first condition in (\ref{psi3}), or, equivalently,
(\ref{V'sr}). In the view of (\ref{VV'}) and (\ref{psi cond}) it suffices to
prove that $V$ satisfies the doubling property, that is,%
\begin{equation}
V\left( 2r\right) \leq CV\left( r\right) .  \label{V2r}
\end{equation}%
Indeed, applying again (\ref{VV'}), we obtain%
\[
\ln V\left( 2r\right) -\ln V\left( r\right) =\int_{r}^{2r}\frac{V^{\prime
}\left( t\right) }{V\left( t\right) }dt\simeq \int_{r}^{2r}\frac{dt}{t}=\ln
2,
\]%
whence (\ref{V2r}) follows.
\end{proof}

In view of Lemma \ref{Lempsi3}, it will be more convenient to describe a
model manifold in terms of the volume function $V\left( r\right) $ rather
than $\psi \left( r\right) $. Hence, we denote the model manifold $\left( 
\mathbb{R}^{N},g_{\psi }\right) $ shortly by $\mathcal{M}_{V}$. 

Given $k$ model manifolds $\mathcal{M}_{V_{1}},....,\mathcal{M}_{V_{k}}$
satisfying (\ref{V2}) (and, hence, {\rm{(VD)}} and {\rm{(PI)}}), consider their
connected sum%
\[
M=\mathcal{M}_{V_{1}}\#...\#\mathcal{M}_{V_{k}}.
\]%
Assume that the standing assumptions of either Theorem \ref{main non-parab}
or Theorem \ref{main parab} are satisfied, that is, either all ends are
non-parabolic or all ends are (COE). Then, Theorems \ref{main non-parab} and %
\ref{main parab} yield that%
\[
\Lambda (B(o,r))\simeq V_{n}(r)h_{n}(r),
\]%
that is,
\[
\lambda (B(o,r))\simeq \frac{1}{V_{n}(r)h_{n}(r)},
\]%
where $n$ is the index of the second largest end.

\begin{example}
Assume that, for all $r\gg 1$,%
\begin{equation}
V\left( r\right) =r^{\alpha }\prod_{j=1}^{J}(\log _{[j]}r)^{\beta (j)},
\label{Vab}
\end{equation}%
where $\alpha >0$, $\beta (1),\ldots ,\beta (J )\in \mathbb{R}$ and $\log
_{[j]}r$ is the $j$-times iterated logarithm. Clearly, in this case $%
\mathcal{M}_{V}$ is parabolic if and only if 
\begin{equation}
(\alpha ,\beta (1),\ldots ,\beta (J ))\preceq (2,1,\ldots ,1),
\label{abparab}
\end{equation}%
where $\preceq $ denotes the lexicographical order. Also, it is easy to see
that $V\left( r\right) $ satisfies (\ref{V2})  and hence (PHI) if and only if 
\begin{equation}
(\alpha ,\beta (1),\ldots ,\beta (J ))\preceq (N,0,\ldots ,0).  \label{ab}
\end{equation}
Note for comparison that the function $V\left( r\right) =\left( \log
r\right) ^{\beta }$ with $\beta >0$ does not satisfy the second condition in
(\ref{V2}).
Some examples of model manifolds with $V\left( r\right) =r^{\alpha }$ are
shown on Fig. \ref{figure: model2}. 
\begin{figure}[tbph]
\begin{center}
\scalebox{1.2}{
\includegraphics{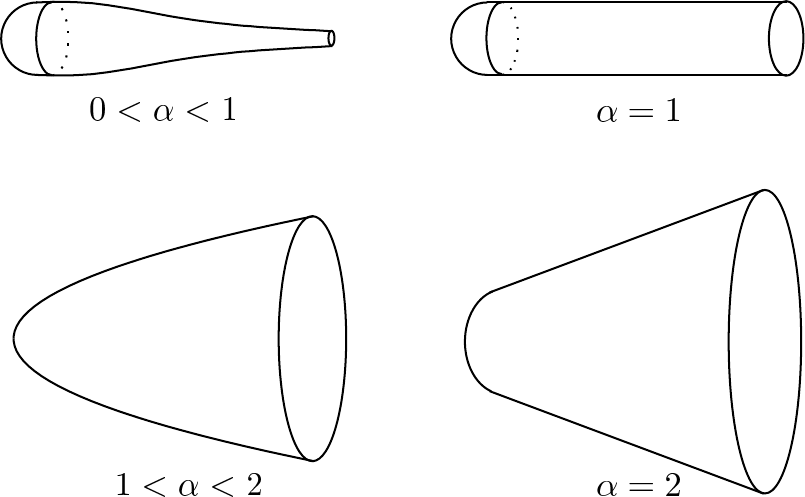} 
}
\end{center}
\par
\caption{Model manifolds $\mathcal{M}_{V}$ of dimension $N=2$ with $V(r)=r^{%
\protect\alpha }$, $r\gg 1.$}\label{figure: model2}
\end{figure}
\end{example}

Assume now that for each $i$ and all $r \gg 1$
\begin{equation}
V_{i}\left( r\right) =r^{\alpha _{i}}\prod_{j=1}^{J }(\log
_{[j]}r)^{\beta _{i}(j)},  \label{Vex}
\end{equation}%
where each $(J +1)$-tuple $\left( \alpha _{i},\beta _{i}(1),\ldots
,\beta _{i}(J )\right) $ satisfies (\ref{ab}). The condition (COE) holds
in this case with the following decomposition of the index set $I$: 
\[
i\in \left\{ 
\begin{array}{ll}
I_{super} &\mbox{if }\alpha _{i}>2, \\ 
I_{sub} & \mbox{if }\alpha _{i}<2, \\ 
I_{middle} &\mbox{if\ }\alpha _{i}=2.%
\end{array}%
\right.
\]%
Let us compute $V_{i}h_{i}$. If $V_{i}$ is non-parabolic, that is, if 
\[
(\alpha _{i},\beta _{i}(1),\ldots ,\beta _{i}(J ))\succ (2,1,\ldots ,1),
\]%
then by (\ref{hi}) $h_{i}\simeq 1$ and, hence, $V_{i}h_{i}\simeq V_{i}.$

Let now $V_{i}$ be parabolic, that is, 
\begin{equation}
\left( \alpha _{i},\beta _{i}(1),\ldots ,\beta _{i}(J )\right) \preceq
(2,1,\ldots ,1).  \label{aipar}
\end{equation}%
If $\alpha _{i}<2$ then $V_{i}$ is subcritical and, hence, 
\begin{equation}
V_{i}h_{i}\left( r\right) \simeq r^{2}.  \label{r2}
\end{equation}%
In the case $\alpha _{i}=2$, we denote by $J _{i}$ the smallest index $%
j=1,...,J $ such that $\beta _{i}\left( j\right) <1;$ if such $j$ does
not exists then take $J _{i}=J +1.$ The parabolicity of $V_{i}$
implies that 
\[
\beta _{i}(j)=1\ \ \mbox{for all }j<J _{i}.
\]%
With this notation we obtain, for all $r\gg 1$, 
\[
h_{i}(r)\simeq \left\{ 
\begin{array}{ll}
r^{2-\alpha _{i}}\prod_{j=1}^{J }\left( \log _{[j]}r\right) ^{-\beta
_{i}(j)} & \mbox{if }\alpha _{i}<2 \\ 
\left( \log _{[J _{i}]}r\right) \prod_{j=J _{i}}^{J }\left( \log
_{[j]}r\right) ^{-\beta _{i}(j)} & \mbox{if }\alpha _{i}=2.%
\end{array}%
\right.
\]%
Combining this, (\ref{r2}) and (\ref{Vex}), we obtain 
\begin{equation}
V_{i}\left( r\right) h_{i}\left( r\right) \simeq \left\{ 
\begin{array}{ll}
r^{2} & \mbox{if }\alpha _{i}<2, \\ 
r^{2}\prod_{j=1}^{J _{i}}\log _{[j]}r & \mbox{if }\alpha _{i}=2%
\end{array}%
\right.  \label{ViHi}
\end{equation}%
and 
\begin{equation}
\widetilde{V}_{i}(r)\simeq \left\{ 
\begin{array}{ll}
r^{4-\alpha _{i}}\prod_{j=1}^{J }\left( \log _{[j]}r\right) ^{-\beta
_{i}(j)} & \mbox{if }\alpha _{i}<2, \\ 
r^{2}\prod_{j=1}^{J _{i}-1}\left( \log _{[j]}r\right) (\log _{[J
_{i}]}r)^{2}\prod_{j=J _{i}}^{J }\left( \log _{[j]}r\right) ^{-\beta
_{i}(j)} & \mbox{if }\alpha _{i}=2.%
\end{array}%
\right.  \label{Vtilex}
\end{equation}

Theorems \ref{main non-parab}, \ref{main parab}, \ref{main heat} and \ref%
{non-parabolic} yield the following. Let $m$ be the index for the largest
volume function and let $n$ be the index for the \emph{second largest} volume
function, that is, for all $i\neq m, n,$ 
\[
(\alpha _{m},\beta _{m}(1),\ldots ,\beta _{m}(J ))\succeq (\alpha
_{n},\beta _{n}(1),\ldots ,\beta _{n}(J ))\succeq (\alpha _{i},\beta
_{i}(1),\ldots ,\beta _{i}(J )).
\]

\emph{Case 1}. Let $M$ be non-parabolic. Then 
\begin{equation}
p(t,o,o)\simeq \frac{1}{\min_{i\in I}\widetilde{V}_{i}(\sqrt{t})},
\label{model heat1}
\end{equation}%
that is, $p(t,o,o)$ is determined by the smallest function $\widetilde{V}%
_{i}(\sqrt{t})$ (see further examples and comments in \cite{G-SC ends}).
Note that it is not easy to give an explicit expression for 
$\min_{ i \in  I} \widetilde{V}_i$ because the form of the result varies depending on the exact nature of 
$(\alpha_i, \beta_i(1), \ldots ,\beta_i(J)) , 1\le  i\le k$. 
Of course, in any particular example, one can compute $\min_{i \in I} \widetilde{V}_i$ explicitly.

\emph{Case 2}. Let $M$ be parabolic. Then 
\begin{equation}
p(t,o,o)\simeq \frac{1}{V_{m}(\sqrt{t})}\simeq \frac{1}{t^{\alpha
_{m}/2}\prod_{j=1}^{J }\left( \log _{[j]}t\right) ^{\beta _{m}(j)}}.
\label{model heat2}
\end{equation}
If $V_i(r) \simeq r^{\alpha_i}$, then the above estimates in (\ref{model heat1}) and (\ref{model heat2}) 
imply that
\[
p(t,o,o) \simeq \frac{1}{t^{\alpha/2}},
\]
where
\[
\alpha=2+\min_{i \in I} |\alpha_i-2|
\]
so that $p(t,o,o)$ is determined by the volume growth function nearest to $r^2$.

In both case 1 and case 2, the Poincar\'{e} constant $\Lambda \left( B\left(
o,r\right) \right) $ is determined by the \emph{second largest} volume function $%
V_{n}(r)$, namely, for all $r\gg 1$ 
\[
\Lambda (B(o,r))\simeq V_{n}(r)h_{n}(r).
\]%
If $V_{n}$ is non-parabolic, then $V_{n}h_{n}\simeq V_{n}$, whence we obtain 
\[
\Lambda (B(o,r))\simeq r^{\alpha _{n}}\prod_{j=1}^{J }\left( \log
_{[j]}r\right) ^{\beta _{n}(j)},
\]%
that is, 
\[
\lambda (B(o,r))\simeq r^{-\alpha _{n}}\prod_{j=1}^{J }\left( \log
_{[j]}r\right) ^{-\beta _{n}(j)}.
\]%

If $V_{n}$ is parabolic, then by (\ref{ViHi}) we obtain 
\begin{equation}
\Lambda (B(o,r))\simeq \left\{ 
\begin{array}{ll}
r^{2} & \mbox{if }\alpha _{n}<2, \\ 
r^{2}\prod_{j=1}^{J _{n}}\log _{[j]}r & \mbox{if }\alpha _{n}=2,%
\end{array}%
\right.  \label{rigidity}
\end{equation}%
that is,
\begin{equation}
\lambda (B(o,r))\simeq \left\{ 
\begin{array}{ll}
\frac{1}{r^{2}} & \mbox{if }\alpha _{n}<2, \\ 
\frac{1}{r^{2}\prod_{j=1}^{J _{n}}\log _{[j]}r }& \mbox{if }\alpha _{n}=2.%
\end{array}%
\right.  
\end{equation}%
For example, (\ref{rigidity}) holds provided at most one of the ends of $M$
is non-parabolic, because in this case the second largest$\ V_{n}$ is
parabolic. We see from (\ref{rigidity}) that in this case the estimate of
the Poincar\'{e} constant exhibits a certain rigidity -- it does not depend
on the exponents $\alpha _{i}$ and $\beta _{i}\left( j\right) ,$ although
it does depend on $J _{n}.$

\section{Heat kernel estimates (central estimates)}

\label{SecHK}
\setcounter{equation}{0}
This section is devoted to obtain
heat kernel estimates on manifolds with nice  ends, which play a key role in the
proof of Theorem \ref{main parab}. The main result here is Theorem \ref{main}
which provides two-sided matching estimates of $p\left( t,o,o\right) $ on
parabolic manifolds with nice ends. This result is of interest by itself
independently of the application to bounding the Poincar\'{e} constant. Theorem \ref{main}
implies the off-diagonal estimates of $p\left( t,x,y\right) $ as in Theorem %
\ref{off-diagonal}.

\subsection{Known results}

\label{Secsubcritical}Let us first recall some known results. As before,
consider a manifold with ends $M=M_{1}\#...\#M_{k}$ where each $M_{i}$ is nice, namely 
$M_i$ is a geodescially complete, weighted manifold that satisfies (PI) and (VD) (see Definition \ref{nice}).
Besides, if $M_{i}$ is parabolic then we assume that it satisfies (RCA). 
Fix a reference point $o_{i}\in M_{i}$, set $V_{i}\left(
r\right) =V_{i}\left( o_{i},r\right) $ and define $h_{i}\left( r\right) $ by
(\ref{hi}).

In the case when $M$ is non-parabolic the matching upper and lower bounds of 
$p\left( t,o,o\right) $ are provided by Theorem \ref{non-parabolic}. Let us
discuss the case when $M$ is parabolic.

Recall that an end $M_{i}$ is called subcritical if, for all $r\gg 1$,%
\begin{equation}
h_{i}(r)\simeq \frac{r^{2}}{V_{i}(r)}  
\end{equation}%
(see Definition \ref{def subcritical}). For example, if $V_{i}\left( r\right) \simeq
r^{\alpha }$ then \thinspace $M_{i}$ is subcritical if and only if $\alpha
<2 $. Conversely, if $M_{i}$ is subcritical, then there exists $\delta >0$
such that 
\begin{equation}
V_{i}(r)\lesssim r^{2-\delta }.  \label{subcritical upper}
\end{equation}%
Indeed, (\ref{def subcritical}) implies that, for some $\delta >0$, 
\[
(\log h_{i}(r))^{\prime }\geq \frac{\delta }{r},
\]%
which implies upon integration that $h_{i}\left( r\right) \gtrsim r^{\delta
} $, which together with (\ref{def subcritical}) yields (\ref{subcritical upper}%
).

\begin{definition}
An end $M_{i}$ is called \emph{critical} if 
$$V_{i}(r)\simeq r^{2}.$$ 
Note
that all critical and subcritical ends are parabolic.
\end{definition}

\begin{theorem}
\label{GIS} (\cite[Theorem 2.1]{GIS}) Let $M=M_{1}\#\cdots \#M_{k}$ where
each end $M_{i}$ is either critical or subcritical. Then, for all $t>0$, we
have%
\[
p(t,o,o)\simeq \frac{1}{V_{m}(\sqrt{t})},
\]%
where $m=m(r)$ is the index of the largest volume function at scale $r$.
\end{theorem}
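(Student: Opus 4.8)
The plan is to obtain the estimate from the surgery/gluing machinery for heat kernels on manifolds with ends developed in \cite{G-SC ends}, specialized to the parabolic regime, as is done in \cite{GIS}. The inputs are: each end $M_i$ is nice, so by Theorem \ref{TVD+PI} the heat kernel $p_{M_i}$ obeys the Li--Yau bounds (\ref{LY}) and $M_i$ satisfies the parabolic Harnack inequality; and each parabolic $M_i$ satisfies (RCA), which keeps the annuli of $M_i$ connected at all scales and thereby makes the exterior (Dirichlet) heat kernel in the end $E_i$ controlled from both sides in terms of $V_i$, the geodesic distance, and the resistance function $h_i$. Since $o$ lies in the central compact part $K$, volume doubling on each end gives $V(o,r)\simeq\sum_{i}V_i(r)\simeq V_m(r)$ for $r\gg 1$, so the target is equivalently $p_M(t,o,o)\simeq 1/V(o,\sqrt t)$, and it suffices to establish the two one-sided bounds.

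For the lower bound, in time $t$ the heat from $o$ fills $B(o,\sqrt t)$; running a Harnack chain along a curve that exits into the largest end $E_m$ --- available by (RCA) on $M_m$ --- the parabolic Harnack inequality on $M_m$ gives $p_M(t,o,y)\gtrsim 1/V_m(\sqrt t)$ for all $y$ in a fixed fraction of $B(o,c\sqrt t)\cap E_m$, a set of measure $\simeq V_m(\sqrt t)$. Then $p_M(2t,o,o)=\int_M p_M(t,o,z)^2\,d\mu(z)\gtrsim V_m(\sqrt t)\cdot(1/V_m(\sqrt t))^{2}=1/V_m(\sqrt t)$. Thus the lower bound is driven by the largest end together with (RCA), and does not really need recurrence.

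For the upper bound, which is the heart of the matter, I would split the diffusion at its first exit from $B(o,R\sqrt t)$ for a large fixed $R$, writing $p_M(t,o,o)$ as the sum of the Dirichlet term $p_M^{B(o,R\sqrt t)}(t,o,o)$ and an excursion term. The Dirichlet term is bounded by $C/\mu(B(o,\sqrt t))\simeq C/V_m(\sqrt t)$: each end being nice, a relative Faber--Krahn inequality holds on $B(o,R\sqrt t)$ with profile governed by $V(o,R\sqrt t)\simeq V_m(\sqrt t)$ by doubling, which yields the on-diagonal upper bound inside the ball. The excursion term collects paths that travel a distance $R\sqrt t$ into some end $E_i$ and return to $K$ within time $t$; bounding it uses the two-sided exterior heat-kernel control on $E_i$ together with the parabolicity of $M_i$, and it is precisely the hypothesis that each end is critical ($V_i(r)\simeq r^2$, hence $V_i h_i\simeq r^2\log r$) or subcritical ($V_i h_i\simeq r^2$) that keeps the resulting contribution $\lesssim 1/V_m(\sqrt t)$. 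In this dichotomy the volume functions are automatically comparable --- every critical end has $V_i\simeq r^2$ and dominates every subcritical end --- so no ordering hypothesis such as (DOE) is required here, in contrast to the more general Theorem \ref{main heat}.

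The main obstacle is exactly this excursion estimate: producing the sharp exterior heat-kernel bounds in each parabolic end and showing that long excursions into the ends contribute no more than $1/V_m(\sqrt t)$ at the central point. This is the step that uses niceness, (RCA), parabolicity and the critical/subcritical structure simultaneously, and it is also where the argument genuinely fails for an end such as $V_i(r)\simeq r^2\log r$, whose treatment requires the finer analysis behind Theorem \ref{main heat} (and the additional assumption (DOE)).
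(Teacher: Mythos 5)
This theorem is not proved in the present paper: it is quoted from \cite[Theorem 2.1]{GIS}, and the paper reproves a generalization of it (Theorem \ref{main heat}) by a potential-theoretic route --- the integrated resolvent $\gamma_{\lambda}=G_{\lambda}\mathbf{1}_{K}$ and its derivative $\dot{\gamma}_{\lambda}$, the inequalities (\ref{p gamma})--(\ref{p gamma dot}), and the hitting-probability bounds (\ref{Phi}) and (\ref{Psi}) for $\Phi_{\lambda}^{K^{c}}$ and $\Psi_{\lambda}^{K^{c}}$, leading to Proposition \ref{prop min min} and then to $1/V_{m}(\sqrt{t})$. Your route (exit-time decomposition, Faber--Krahn for the Dirichlet piece, excursion estimates for the rest) is genuinely different, and its central step does not work. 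The Dirichlet term $p^{D}_{B(o,R\sqrt{t})}(t,o,o)$ is the dominant term --- it differs from $p(t,o,o)$ only by the excursion correction --- so bounding it by $C/V(o,\sqrt{t})$ is essentially the whole theorem. The tool you invoke for it, a relative Faber--Krahn inequality on the central ball with profile governed by $V(o,\cdot)$, is false on manifolds with ends: such an inequality is equivalent to (VD)+(PI) for central balls, which the main theorems of this paper show fails as soon as the second largest end grows faster than $r^{2}$ (already on $\mathbb{R}^{3}\#\mathbb{R}^{3}$, the set $\Omega=B(o,r)\cap E_{2}$ has $\lambda_{1}(\Omega)\simeq r^{-3}$, not $\gtrsim r^{-2}$). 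What Faber--Krahn/Nash surgery actually yields on a connected sum is the bound $p(t,o,o)\lesssim 1/\min_{i}\widetilde{V}_{i}(\sqrt{t})$ of \cite{G-SC FK}, and Section \ref{Secsubcritical} states explicitly that in the parabolic case this is not optimal; it is precisely to overcome this that \cite{GIS} introduces $\gamma_{\lambda}$ and $\dot{\gamma}_{\lambda}$. So the heart of your upper bound is missing, not merely technical.

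The lower bound as written is also circular: a parabolic Harnack chain propagates a lower bound for the caloric function $p_{M}(\cdot,o,\cdot)$ from one space-time point to another but cannot create the absolute bound $1/V_{m}(\sqrt{t})$ without an anchor, and the natural anchor is $p(ct,o,o)$ --- the quantity being estimated. The correct and shorter arguments are either the one the paper uses (the on-diagonal upper bound together with the doubling of $V(o,r)\simeq V_{m}(r)$ implies the matching lower bound by \cite{Coulhon-Grigoryan}), or the exit-time/Cauchy--Schwarz inequality
\begin{equation*}
\tfrac{1}{2}\leq \int_{B(o,C\sqrt{t})}p(t,o,z)\,d\mu (z)\leq \left( \mu
\left( B(o,C\sqrt{t})\right) \,p(2t,o,o)\right) ^{1/2},
\end{equation*}
neither of which needs the detour through $E_{m}$. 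Your concluding observations --- that in the critical/subcritical dichotomy the ends are automatically ordered so that no (DOE)-type hypothesis is needed, and that ends like $V_{i}(r)\simeq r^{2}\log r$ require the finer analysis of Theorem \ref{main heat} --- are correct and consistent with Lemma \ref{Lem2} and the remarks in Section \ref{SecHK}.
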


Our main goal in this section is to prove Theorem \ref{main} stated in
Introduction that provides the same estimates of $p\left( t,o,o\right) $ but
for a more general class of parabolic manifolds with ends (see Section \ref%
{SecExamples}).

Let us first explain how the estimates of $p(t,o,o)$ lead to the estimates
of $p\left( t,x,y\right) $ for all $x,y\in M.$ For $r,t>0$, set 
\begin{eqnarray*}
H_{i}\left( r,t\right) &=&\frac{r^{2}}{V_{i}(r)h_{i}(r)}+\frac{1}{h_{i}(%
\sqrt{t})}\left( \int_{r}^{\sqrt{t}}\frac{sds}{V_{i}(s)}\right) _{+} \\
&=&\frac{r^{2}}{V_{i}(r)h_{i}(r)}+\frac{\left( h_{i}\left( \sqrt{t}\right)
-h_{i}\left( r\right) \right) _{+}}{h_{i}(\sqrt{t})}
\end{eqnarray*}%
and%
\[
D_{i}\left( r,t\right) =\frac{h_{i}(r)}{h_{i}(r)+h_{i}(\sqrt{t})}.
\]

The following result follows from \cite[Theorems 3.5 ]{G-SC ends}, \cite[%
Thms 4.4, 4.6]{G-SC hitting}, \cite[Thms 3.3, 4.9]{G-SC Dirichlet}, \cite[%
(5.1), (5.6), (5.7)]{GIS} and the local Harnack inequality near $o\in K$.
Set $\left\vert x\right\vert =d\left( x,K\right) +3.$

\begin{theorem}
\label{off-diagonal} Let $M=M_{1}\#\cdots \#M_{k}$ be a manifold with nice ends,
where  each parabolic end satisfies {\rm{(RCA)}}. Then, for all $x\in E_{i}$, $y\in E_{j}$, $t\gg 1$ we
have 
\begin{small}
\begin{eqnarray*}
p(t,x,y) &\asymp &\delta _{ij}\frac{1}{V_{i}(x,\sqrt{t})}%
D_{i}\left( \left\vert x\right\vert ,t\right) D_{j}\left( \left\vert
y\right\vert ,t\right) e^{-b\frac{d^{2}(x,y)}{t}} \\
&&\!\!\!\!\!\!\!\!\!\!\!\!\!\!\!\!\!\!\!\!+p(t,o,o)H_{i}\left( \left\vert x\right\vert ,t\right)
H_{j}\left( \left\vert y\right\vert ,t\right) e^{-b\frac{|x|^{2}+|y|^{2}}{t}}\\
&&\!\!\!\!\!\!\!\!\!\!\!\!\!\!\!\!\!\!\!\! +\int_{1}^{t}p(s,o,o)ds\left[ \frac{D_{i}\left( \left\vert
x\right\vert ,t\right) H_{j}\left( \left\vert y\right\vert ,t\right) }{V_{i}(%
\sqrt{t})h_{i}(\sqrt{t})}+\frac{D_{j}\left( \left\vert y\right\vert
,t\right) H_{i}\left( \left\vert x\right\vert ,t\right) }{V_{j}(\sqrt{t}%
)h_{j}(\sqrt{t})}\right] e^{-b\frac{|x|^{2}+|y|^{2}}{t}}. 
\end{eqnarray*}
\end{small}
\end{theorem}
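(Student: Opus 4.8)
The plan is to assemble the displayed estimate from the gluing machinery for heat kernels on manifolds with ends already developed in \cite{G-SC ends}, \cite{G-SC hitting}, \cite{G-SC Dirichlet} and \cite{GIS}: the only datum specific to the present setting is the diagonal value $p(t,o,o)$, which the formula keeps symbolic, so the statement is really a structural translation of ``knowing the diagonal'' into ``knowing everything''. First I would invoke the gluing identity of \cite[Theorem 3.5]{G-SC ends}, which, for $x\in E_i$ and $y\in E_j$, splits $p(t,x,y)$ into three contributions according to the behaviour of the diffusion bridge relative to the central compact set $K$: bridges that never meet $K$ (possible only when $i=j$, governed by the Dirichlet heat kernel of the end $E_i$ with absorption on $\partial E_i$); bridges that visit $K$ in essentially one short bout (a convolution of the hitting data $x\rightsquigarrow K$, the heat kernel of $M$ near $K$, and the entrance data $K\rightsquigarrow y$); and bridges that linger near $K$ for a macroscopic fraction of $[0,t]$ (the scenario isolated by the second-moment / Chapman--Kolmogorov decomposition of \cite[(5.1), (5.6), (5.7)]{GIS}). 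These are precisely the three lines of the displayed estimate.

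Next I would substitute the sharp bounds for each ingredient. On a nice end $E_i\cong M_i\setminus K_i$, the Dirichlet heat kernel with absorption on $\partial E_i$ equals, by \cite[Thms 3.3, 4.9]{G-SC Dirichlet} together with the hitting estimates of \cite[Thms 4.4, 4.6]{G-SC hitting}, the $M_i$-kernel $\tfrac{C}{V_i(x,\sqrt t)}e^{-bd^2(x,y)/t}$ damped by the factors $D_i(|x|,t)$ and $D_j(|y|,t)$ coming from the exit-time estimates; here $D_i(r,t)=h_i(r)/(h_i(r)+h_i(\sqrt t))$, which is $\simeq 1$ on non-parabolic ends ($h_i\simeq 1$) and decays on parabolic ones. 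The same hitting estimates control the $x\rightsquigarrow K$ data by $H_i(|x|,t)\,e^{-b|x|^2/t}$, with $H_i(r,t)$ a time-truncated Green-type quantity for the passage from distance $r$ to $K_i$ (so $H_i\simeq r^2/V_i(r)$ on non-parabolic ends and $H_i\simeq 1$ on subcritical ends, the parabolic correction being $\tfrac{1}{h_i(\sqrt t)}(h_i(\sqrt t)-h_i(r))_+$). The central factor is $p(t,o,o)$ for the one-bout term and the time-truncated Green function $\int_1^t p(s,o,o)\,ds$ for the lingering term. I would then glue the three regimes using the local parabolic Harnack inequality available on every nice end $M_i$ (Definition \ref{nice}) together with the ordinary local Harnack inequality on a fixed neighbourhood of $K$; this is what lets one replace each base point of $K$ by $o$, replace $d(x,z)$ for $z\in K$ by $|x|=d(x,K)+3$ (the shift keeping all points uniformly away from $K$ so Harnack applies), absorb the small-time and near-$K$ behaviour, and produce the two-sided $\asymp$ with loss of constants only in $C$ and $b$.

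The main obstacle is bookkeeping across the cited sources: \cite{G-SC ends} is written for non-parabolic $M$ and \cite{GIS} for $M$ whose ends are all critical or subcritical, so I would need to check that under the present hypotheses --- all ends nice, parabolic ends obeying {\rm{(RCA)}} --- the pieces recombine into one formula with a genuinely continuous transition between the parabolic and non-parabolic cases. The delicate point is that the cross terms are governed by the time-\emph{integral} $\int_1^t p(s,o,o)\,ds$ rather than by $t\,p(t,o,o)$: this encodes that the cheapest route from end $E_i$ to end $E_j$ has the diffusion dwell a constant fraction of $[0,t]$ near the junction, and it is exactly the content of the second-moment estimate of \cite{GIS}. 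Once this is in place, feeding in the diagonal estimates of Theorems \ref{non-parabolic} and \ref{main1} turns the displayed formula into fully explicit two-sided heat kernel bounds in every regime.
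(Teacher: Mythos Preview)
Your proposal is correct and matches the paper's approach exactly: the paper does not give a proof but simply states that the result follows from \cite[Theorem 3.5]{G-SC ends}, \cite[Thms 4.4, 4.6]{G-SC hitting}, \cite[Thms 3.3, 4.9]{G-SC Dirichlet}, \cite[(5.1), (5.6), (5.7)]{GIS} and the local Harnack inequality near $o\in K$, which is precisely the list of ingredients you assemble. Your sketch is in fact more detailed than what the paper provides.
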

This is a very general estimate.  
In the view of this result, the on-diagonal value $p(t,o,o)$ of the heat
kernel plays a key role in estimating of $p\left( t,x,y\right) .$
 One of the aim of this paper is to improve upon the existing results from \cite{GIS}.
See Theorem \ref{main}.

\subsection{Preliminary estimates}

A classical method of obtaining on-diagonal heat kernel bounds is to use a
Nash-type functional inequality, which gives a uniform upper bound for $%
p\left( t,x,x\right) $ for all $x\in M.$ Indeed, such an inequality can be
proved on manifold $M$ in the setting of Theorem \ref{off-diagonal} (see 
\cite{G-SC FK}) but in the case when $M$ is parabolic, this upper bound is
not optimal. In order to obtain an optimal upper bound, we have developed in 
\cite[Section 3]{GIS} a different method based on the \emph{integrated
resolvent kernel} $\gamma _{\lambda }(x)$ and its derivative $\dot{\gamma}%
_{\lambda }(x).$ However, in \cite[Section 3]{GIS} we could handle only
critical and subcritical ends (cf. Theorem \ref{GIS}). For the proof of
Theorem \ref{main}, we apply the method of \cite[Section 3]{GIS}, but with
significant improvements that allow us to handle much more general ends.

Let $(M,\mu )$ be a geodesically complete, non-compact weighted manifold.
For any $\lambda >0$, we define the \emph{resolvent operator} $G_{\lambda }$
acting on non-negative measurable functions $f$ on $M$ by 
\[
G_{\lambda }f(x)=\int_{M}\int_{0}^{\infty }e^{-\lambda t}p(t,x,z)f(z)dzdt.
\]%
We remark that $u=G_{\lambda }f$ is the minimal non-negative weak solution
of the equation 
\begin{equation}
\Delta u-\lambda u=-f  \label{Dirichlet problem}
\end{equation}%
(see \cite{G AMS}). Fix a compact set $K\subset M$. We define the functions $%
\gamma _{\lambda }$ and $\dot{\gamma}_{\lambda }$ on $M$ by 
\[
\gamma _{\lambda }(x):=G_{\lambda }1_{K}(x)=\int_{K}\int_{0}^{\infty
}e^{-\lambda t}p(t,x,z)dzdt
\]%
and 
\[
\dot{\gamma}_{\lambda }(x):=G_{\lambda }\gamma _{\lambda }=-\frac{\partial }{%
\partial \lambda }\gamma _{\lambda }(x)=\int_{K}\int_{0}^{\infty
}te^{-\lambda t}p(t,x,z)dzdt.
\]%
Fix a reference point $o\in K$. The following lemma follows from the
estimates in \cite[(3.10), (3.11)]{GIS}.

\begin{lemma}
There exist constants $C>0$ and $t_{0}>0$ depending on $K$ and such that,
for all $x\in K$ and $t\geq t_{0}$, 
\begin{eqnarray}
p(t,o,o)& \leq & \frac{C}{t}\gamma _{\frac{1}{t}}(x),  \label{p gamma} \\
p(t,o,o)& \leq &\frac{C}{t^{2}}\dot{\gamma}_{\frac{1}{t}}(x).
\label{p gamma dot}
\end{eqnarray}
\end{lemma}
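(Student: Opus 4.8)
The plan is to bound $\gamma_{1/t}(x)$ and $\dot{\gamma}_{1/t}(x)$ from below by discarding most of the defining integrals, keeping only the range $s\in[t/2,t]$ and $z\in K$, and then comparing the integrand $p(s,x,z)$ with $p(t,o,o)$ by means of a local parabolic Harnack inequality near $K$. So the heart of the matter is the following claim: there are constants $c_{0}>0$ and $c_{1}>0$ (depending on $K$) such that
\[
p(s,x,z)\ \ge\ c_{0}\,p(s-c_{1},o,o)\qquad\text{for all }x,z\in K\text{ and all }s\ge 2c_{1}.
\]

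To prove this I would fix a connected, relatively compact open set $\widehat{K}\supset K$. Since $M$ is a smooth weighted manifold and $\widehat{K}$ is relatively compact, the heat equation $\partial_{s}u=\Delta u$ satisfies the classical local parabolic Harnack inequality inside $\widehat{K}$, uniformly for parabolic cylinders of some fixed radius $r_{0}>0$. For fixed $z$, the function $(s,x)\mapsto p(s,x,z)$ is a nonnegative solution of the heat equation, and by symmetry the same holds for $(s,z)\mapsto p(s,x,z)$. Chaining the Harnack inequality along a path in $\widehat{K}$ from $o$ to $x$, and then along a path from $z$ back to $o$, with the total number of chaining steps taken to be a fixed $N_{0}$ independent of $x,z\in K$ (always possible, by lengthening the paths if needed), each step costs a fixed multiplicative constant $C_{H}$ and a fixed backward time-shift of order $r_{0}^{2}$; this yields the claim, with $c_{1}$ of order $N_{0}r_{0}^{2}$ and $c_{0}=C_{H}^{-N_{0}}$.

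Granting the claim, I combine it with the elementary fact that $s\mapsto p(s,o,o)$ is non-increasing (the heat semigroup is an $L^{2}$-contraction, so $p(2s,o,o)=\|p(s,o,\cdot)\|_{2}^{2}$ is decreasing). Hence, for $t/2\le s\le t$ and $t\ge t_{0}:=4c_{1}$, we have $s-c_{1}\le t$ and therefore $p(s,x,z)\ge c_{0}\,p(s-c_{1},o,o)\ge c_{0}\,p(t,o,o)$. Integrating over $s\in[t/2,t]$ and $z\in K$, with $e^{-s/t}\ge e^{-1}$, gives for all $x\in K$ and $t\ge t_{0}$
\[
\gamma_{1/t}(x)\ \ge\ \int_{t/2}^{t}e^{-s/t}\Big(\int_{K}p(s,x,z)\,dz\Big)\,ds\ \ge\ \frac{c_{0}\,\mu(K)}{2e}\,t\,p(t,o,o),
\]
which is (\ref{p gamma}), and, keeping in addition the factor $s\ge t/2$,
\[
\dot{\gamma}_{1/t}(x)\ \ge\ \int_{t/2}^{t}s\,e^{-s/t}\Big(\int_{K}p(s,x,z)\,dz\Big)\,ds\ \ge\ \frac{c_{0}\,\mu(K)}{4e}\,t^{2}\,p(t,o,o),
\]
which is (\ref{p gamma dot}).

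I expect the Harnack chaining to be the only delicate step: one must keep the backward time-shift $c_{1}$ and the constant $c_{0}$ genuinely independent of $s$ (hence of $t$), which is exactly why the range $s\le t$ together with the monotonicity of the diagonal is exploited, and why the statement holds only for $t\ge t_{0}$. One could instead try to replace the Harnack step by a crude fixed-time bound such as $p(s,x,z)\ge\int_{K}p(s-1,x,w)\,p(1,w,z)\,dw$ together with $\inf_{K\times K}p(1,\cdot,\cdot)>0$, but passing from a general $x\in K$ to $o$ without destroying the correct (typically polynomial) decay rate of $p(t,o,o)$ still requires a Harnack-type comparison — a Dirichlet eigenfunction lower bound, for instance, would only produce the far too small rate $e^{-\lambda_{1}t}$.
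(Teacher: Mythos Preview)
Your argument is correct. The paper does not give a self-contained proof of this lemma but instead refers to the estimates (3.10)--(3.11) of \cite{GIS}; the argument there is precisely the one you wrote: restrict the time integral to $s\in[t/2,t]$, use the local parabolic Harnack inequality near $K$ (chained a fixed number of times, uniformly over $x,z\in K$) to get $p(s,x,z)\ge c_{0}\,p(s-c_{1},o,o)$, and combine this with the monotonicity of $s\mapsto p(s,o,o)$ so that the backward time-shift $c_{1}$ is harmless for $t\ge t_{0}$.
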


As in \cite[Section 4]{GIS}, in order to estimate $\gamma _{\lambda }$ and $%
\dot{\gamma}_{\lambda }$, we will use the functions $\Phi _{\lambda
}^{\Omega }$ and $\Psi _{\lambda }^{\Omega }$ defined below. Denote by $%
p_{\Omega }^{D}(t,x,y)$ the Dirichlet heat kernel in an open set $\Omega
\subset M$. For any $\lambda >0$, define the resolvent operator $G_{\lambda
}^{\Omega }$ on non-negative measurable functions $f$ in $\Omega $ by 
\[
G_{\lambda }^{\Omega }f(x)=\int_{\Omega }\int_{0}^{\infty }e^{-\lambda
t}p_{\Omega }^{D}(t,x,z)f(z)dzdt,
\]%
so that $G_{\lambda }^{\Omega }f$ is the minimal non-negative weak solution
of the equation $\Delta u-\lambda u=-f$ in $\Omega $.

Now we define the functions $\Phi _{\lambda }^{\Omega }$ and $\Psi _{\lambda
}^{\Omega }$ in $\Omega $ by 
\begin{eqnarray*}
\Phi _{\lambda }^{\Omega }&=& \lambda G_{\lambda }^{\Omega }1, \\
\Psi _{\lambda }^{\Omega }&=& G_{\lambda }^{\Omega }(1-\Phi _{\lambda
}^{\Omega }).
\end{eqnarray*}%
These functions have certain probabilistic meaning. Indeed, denote by $%
(\{X_{t}\}_{t\geq 0},\{\mathbb{P}_{x}\}_{x\in M})$ Brownian motion on $M$.
For any open set $\Omega \subset M$, let $\tau _{\Omega }$ be the first exit
time of $X_{t}$ from $\Omega $, that is, 
\[
\tau _{\Omega }=\inf \{t>0:X_{t}\not\in \Omega \}.
\]%
Then, by \cite[(3.17), (3.25)]{GIS}, for any $x\in \Omega $, 
\begin{eqnarray}
\Phi _{\lambda }^{\Omega }(x)& =&\int_{0}^{\infty }\lambda e^{-\lambda t}%
\mathbb{P}_{x}(\tau _{\Omega }>t)dt,  \label{FiOm} \\
\Psi _{\lambda }^{\Omega }(x)& =& \int_{0}^{\infty }te^{-\lambda t}\partial
_{t}\mathbb{P}_{x}(\tau _{\Omega }\leq t)dt.  \label{PsiOm}
\end{eqnarray}%
Now let us assume that the manifold $M$ satisfies $\mathrm{(PI)}$, $\mathrm{(VD)}$ and $\mathrm{(RCA)}$%
. Let $A$ be a precompact open set containing%
\[
K_{\varepsilon }:=\{z\in M:d(K,z)<\varepsilon \}
\]%
for some $\varepsilon >0$. Define function $h\left( r\right) $ for $r>0$ by 
\begin{equation}
h(r)=1+\left( \int_{1}^{r}\frac{sds}{V(s)}\right) _{+},  \label{hdef}
\end{equation}%
where $V(r)=V(o,r)$. In the next lemmas, we obtain some new estimates of $%
\Phi _{\lambda }^{K^{c}}$ and $\Psi _{\lambda }^{K^{c}}$.

\begin{lemma}
There exists $c,\lambda _{0}>0$ depending on $K$ and $A$, and such that, for
all $\lambda \in \left( 0,\lambda _{0}\right) $, 
\begin{equation}
\inf_{\partial A}\Phi _{\lambda }^{K^{c}}\geq \frac{c}{h(\frac{1}{\sqrt{%
\lambda }})}.  \label{Phi}
\end{equation}%
%
%
%
%
%
%
%
%
%
%
%
%
%
%
%
%
%
%
\end{lemma}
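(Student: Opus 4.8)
The plan is to establish the lower bound \eqref{Phi} on $\partial A$ by passing through an intermediate estimate on a sphere far from $K$ and using a capacity/comparison argument for the function $\Phi_\lambda^{K^c}$. First I would recall the probabilistic meaning \eqref{FiOm}, so that $\Phi_\lambda^{K^c}(x) = \mathbb E_x\!\left[e^{-\lambda \tau_{K^c}}\right]$ is the Laplace transform at $\lambda$ of the first hitting time $\tau_{K^c}$ of $K$ by Brownian motion started at $x$; equivalently, $\Phi_\lambda^{K^c}$ is the minimal nonnegative solution of $\Delta u - \lambda u = 0$ in $K^c$ with boundary value $1$ on $\partial K$. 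Since $A \supset K_\varepsilon$, the set $A$ sits ``just outside'' $K$, and the issue is purely about how much probability mass of the $\lambda$-subordinated motion returns to $K$ before the exponential clock rings, starting from the compact annular region $\partial A$.

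The key step is to compare $\Phi_\lambda^{K^c}$ from below by a solution on an annulus $B(o,R)\setminus K$ with $R \simeq 1/\sqrt{\lambda}$. Concretely, take $R = 1/\sqrt\lambda$ and consider the $\lambda$-harmonic function $w$ in $B(o,R)\setminus \overline K$ with $w = 1$ on $\partial K$ and $w = 0$ on $\partial B(o,R)$; by the maximum principle $\Phi_\lambda^{K^c} \ge w$ in that annulus. On the scale $R$ with $\lambda R^2 \simeq 1$, the zeroth-order term $\lambda u$ is comparable to $u/R^2$, so $w$ behaves, up to constants, like the $\lambda$-capacitary potential, and its value on $\partial A$ is bounded below by a ratio of capacities: roughly $w|_{\partial A} \gtrsim \capa(K, B(o,R)) / \capa(\partial A, B(o,R))$. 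Using (VD) and (PI) on $M$ — which give the Li--Yau heat kernel bounds \eqref{LY} and hence two-sided control of the Green function on annuli — one evaluates $\capa(\partial A, B(o,R))$ against $\capa(K,B(o,R))$; the former is comparable to the reciprocal of $\int_{c}^{R} \frac{s\,ds}{V(s)}$, which is exactly $h(R)-1 \asymp h(1/\sqrt\lambda)$ for $\lambda$ small (since $h(R)\to\infty$ or $h\simeq 1$, and in the parabolic case $h$ dominates the constant). This yields $\inf_{\partial A}\Phi_\lambda^{K^c} \gtrsim 1/h(1/\sqrt\lambda)$, which is \eqref{Phi}; in the non-parabolic case $h\simeq 1$ and the bound is just a positive constant, consistent with $\Phi_\lambda^{K^c}$ staying bounded away from $0$ as $\lambda\to 0$.

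An equivalent and perhaps cleaner route — the one I would actually write — avoids capacities and instead argues directly with $\mathbb E_x[e^{-\lambda\tau_{K^c}}]$: one shows the hitting probability $\mathbb P_x(\tau_{K^c} < T)$ for $x \in \partial A$ and $T \simeq 1/\lambda$ is at least $c/h(\sqrt T)$ by a chaining/last-exit decomposition using the off-diagonal heat kernel estimates of Theorem~\ref{off-diagonal} (or, more elementarily, the annulus harmonic measure estimates that follow from (VD)+(PI)), and then $\mathbb E_x[e^{-\lambda\tau_{K^c}}] \ge e^{-\lambda T}\,\mathbb P_x(\tau_{K^c}<T) \gtrsim \mathbb P_x(\tau_{K^c}<T)$. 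The main obstacle is getting the constant $c$ in \eqref{Phi} to be \emph{uniform} over all small $\lambda$ and independent of the particular choice of $A$ (only depending on $K$ and $A$ as stated): this requires the comparison to be genuinely at the correct scale $1/\sqrt\lambda$ and to use (RCA) so that the level sets of $\Phi_\lambda^{K^c}$ on spheres $\{d(o,\cdot)=r\}$ are connected and the restriction to $\partial A$ is comparable to a radial profile. The routine parts — identifying $\int_c^R s\,ds/V(s)$ with $h(R)$, and the maximum-principle comparisons — I would leave as short computations.
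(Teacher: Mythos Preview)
There is a genuine error at the very first step: you have misidentified $\Phi_\lambda^{K^c}$. From \eqref{FiOm},
\[
\Phi_\lambda^{K^c}(x)=\int_0^\infty \lambda e^{-\lambda t}\,\mathbb P_x(\tau_{K^c}>t)\,dt
=\mathbb P_x(T_\lambda<\tau_{K^c})
=1-\mathbb E_x\!\left[e^{-\lambda\tau_{K^c}}\right],
\]
where $T_\lambda$ is an independent exponential of rate $\lambda$. Thus $\Phi_\lambda^{K^c}$ is \emph{not} the Laplace transform of the hitting time of $K$; it is its complement. Equivalently, $\Phi_\lambda^{K^c}$ solves $\Delta u-\lambda u=-\lambda$ in $K^c$ with boundary value $0$ on $\partial K$, not the homogeneous equation with boundary value $1$. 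This reverses both of your arguments. In the capacity route, the comparison $\Phi_\lambda^{K^c}\ge w$ fails immediately since $\Phi_\lambda^{K^c}=0$ on $\partial K$ while $w=1$ there. In the ``cleaner route'', a lower bound on the hitting probability $\mathbb P_x(\tau_{K^c}<T)$ yields a lower bound on $\mathbb E_x[e^{-\lambda\tau_{K^c}}]$, hence an \emph{upper} bound on $\Phi_\lambda^{K^c}$, which is the wrong direction.

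What is actually needed is a lower bound on the \emph{survival} probability $\mathbb P_x(\tau_{K^c}>t)$ for $x\in\partial A$ and $t\simeq 1/\lambda$. The paper simply quotes the known two-sided estimate $\mathbb P_x(\tau_{K^c}>t)\simeq h(|x|)/h(\sqrt t)$ for $t\ge t_1$ (this is \cite[(4.47)]{G-SC hitting}, valid under (VD), (PI), (RCA)), substitutes it into \eqref{FiOm}, and integrates over $t\in[1/(2\lambda),1/\lambda]$:
\[
\Phi_\lambda^{K^c}(x)\ \ge\ \int_{1/(2\lambda)}^{1/\lambda}\lambda e^{-\lambda t}\,\frac{c}{h(\sqrt t)}\,dt\ \ge\ \frac{c}{h(1/\sqrt\lambda)}\int_{1/(2\lambda)}^{1/\lambda}\lambda e^{-\lambda t}\,dt\ \gtrsim\ \frac{1}{h(1/\sqrt\lambda)}.
\]
Once the sign is fixed, your probabilistic route becomes exactly this; there is no need for capacities or maximum-principle comparisons.
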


\begin{proof}
By \cite[(4.47)]{G-SC hitting}, we have, for all $x\in \partial A$ and $%
t\geq t_{1},$ 
\[
\mathbb{P}_{x}(\tau _{K^{c}}>t)\approx \frac{h(|x|)}{h(\sqrt{t})}
\]%
where $t_{1}>0$ depends on $A$. Substituting this into (\ref{FiOm}) and
using the fact that $\inf_{\partial A}h\left( \left\vert x\right\vert
\right) >0$, we obtain, for all $\lambda <1/\left( 2t_{1}\right) $ and $x\in
\partial A$, 
\[
\Phi _{\lambda }^{K^{c}}(x)\geq \int_{1/\left( 2\lambda \right) }^{1/\lambda
}\lambda e^{-\lambda t}\frac{c}{h(\sqrt{t})}dt\geq \frac{c}{h(\frac{1}{\sqrt{%
\lambda }})}\int_{1/\left( 2\lambda \right) }^{1/\lambda }\lambda
e^{-\lambda t}dt\geq \frac{c}{h(\frac{1}{\sqrt{\lambda }})}.
\]
\end{proof}

\begin{remark}
If $M$ is critical, then the estimate in (\ref{Phi}) implies that 
\[
\inf_{\partial A}\Phi _{\lambda }^{K^{c}}\geq \frac{c}{\log \frac{1}{\sqrt{%
\lambda }}},
\]%
which coincides with \cite[(3.34)]{GIS}. If $M$ is subcritical, the estimate
in (\ref{Phi}) implies that 
\[
\inf_{\partial A}\Phi _{\lambda }^{K^{c}}\geq c\lambda V(\frac{1}{\sqrt{%
\lambda }}),
\]%
which is identical to \cite[(3.33)]{GIS}.
\end{remark}

Recall that $V\left( r\right) $ is regular if it satisfies (\ref{regular})
that is, 
\[
c\left( \frac{R}{r}\right) ^{2-\gamma _{2}}\leq \frac{V(R)}{V(r)}\leq
C\left( \frac{R}{r}\right) ^{2+\gamma _{1}}\ \ \mbox{for all }1\leq r\leq R
\]%
for some positive $\gamma _{1},\gamma _{2}$ such that $2\gamma _{1}+\gamma
_{2}<2.$

Set $\widetilde{V}(r)=V(r)h^{2}(r)$. Then we obtain the following upper
estimate of $\widetilde{V}(r)$ on manifold with regular volume function.

\begin{lemma}
Let $M$ be regular, that is, the volume function $V$ is regular. Then, for
all $1\leq r\leq R$,%
\begin{eqnarray}
\widetilde{V}(r)& \lesssim & r^{2+\gamma _{1}+\gamma _{2}}\lesssim r^{4},
\label{Vtil1} \\
\frac{\widetilde{V}(R)}{\widetilde{V}(r)}& \lesssim &\left( \frac{R}{r}%
\right) ^{2+2\gamma _{1}+\gamma _{2}}.  \label{Vtil2}
\end{eqnarray}
\end{lemma}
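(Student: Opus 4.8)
The plan is to reduce everything to two elementary pointwise bounds coming from regularity, and then to handle the ratio estimate~(\ref{Vtil2}) by a careful combination rather than a naive product. First I would record, for all $r\ge 1$, that
\[
V(r)\lesssim r^{2+\gamma_1},\qquad V(r)h(r)\lesssim r^{2+\gamma_1},\qquad h(r)\lesssim r^{\gamma_2}.
\]
The first is (\ref{regular}) applied to the pair $(1,r)$. For the second, (\ref{regular}) applied to $(s,r)$ gives $V(r)/V(s)\lesssim (r/s)^{2+\gamma_1}$, so $V(r)\int_1^r \frac{s\,ds}{V(s)}\lesssim r^{2+\gamma_1}\int_1^r s^{-1-\gamma_1}\,ds\lesssim r^{2+\gamma_1}$ since $\gamma_1>0$; combined with the first bound and the identity $h(r)=1+\int_1^r\frac{s\,ds}{V(s)}$ (valid for $r\ge1$) this gives $V(r)h(r)\lesssim r^{2+\gamma_1}$. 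For the third, (\ref{regular}) applied to $(1,s)$ gives $V(s)\gtrsim s^{2-\gamma_2}$, hence $\int_1^r \frac{s\,ds}{V(s)}\lesssim \int_1^r s^{\gamma_2-1}\,ds\lesssim r^{\gamma_2}$ since $\gamma_2>0$, so $h(r)\lesssim r^{\gamma_2}$. With these in hand, (\ref{Vtil1}) is immediate: $\widetilde V(r)=\big(V(r)h(r)\big)h(r)\lesssim r^{2+\gamma_1}\cdot r^{\gamma_2}=r^{2+\gamma_1+\gamma_2}$, and $r^{2+\gamma_1+\gamma_2}\le r^4$ for $r\ge 1$ because $\gamma_1+\gamma_2<2\gamma_1+\gamma_2<2$.

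For~(\ref{Vtil2}) the naive route $\widetilde V(R)/\widetilde V(r)=\frac{V(R)}{V(r)}\big(\tfrac{h(R)}{h(r)}\big)^2$ with the separate extremal bounds $\frac{V(R)}{V(r)}\lesssim(R/r)^{2+\gamma_1}$ and $\frac{h(R)}{h(r)}\lesssim(R/r)^{\gamma_2}$ is too lossy (it produces the exponent $2+\gamma_1+2\gamma_2$, which can exceed $2+2\gamma_1+\gamma_2$). The reason is that the two extremes are mutually exclusive: if $V(R)/V(r)$ is near its upper extreme then $V$ grows fast and $h$ essentially saturates. To exploit this I would first prove two one-sided bounds on $h(R)-h(r)=\int_r^R\frac{s\,ds}{V(s)}$ (for $1\le r\le R$), estimating $1/V(s)$ two different ways via~(\ref{regular}): one anchored at $r$, namely $\int_r^R\frac{s\,ds}{V(s)}\lesssim\frac{r^2}{V(r)}(R/r)^{\gamma_2}$, and one anchored at $R$, namely $\int_r^R\frac{s\,ds}{V(s)}\lesssim\frac{R^2}{V(R)}(R/r)^{\gamma_1}$. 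Using the elementary inequality $h(r)\gtrsim r^2/V(r)$ (cf.~(\ref{hi>}), which holds since regularity implies doubling), the anchored-at-$r$ bound yields $h(R)\lesssim h(r)(R/r)^{\gamma_2}$, while the anchored-at-$R$ bound yields $h(R)\lesssim h(r)+\frac{R^2}{V(R)}(R/r)^{\gamma_1}$, which deliberately keeps the small factor $R^2/V(R)$.

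Then I would multiply the two bounds on $h(R)$ to estimate $h(R)^2$, and compute
\[
\widetilde V(R)=V(R)h(R)^2\lesssim (R/r)^{\gamma_2}h(r)\Big[V(R)h(r)+R^2(R/r)^{\gamma_1}\Big].
\]
In the first bracketed term the possibly large $V(R)$ is now multiplied only by the fixed factor $h(r)$, so $V(R)h(r)\lesssim (R/r)^{2+\gamma_1}V(r)h(r)$ by~(\ref{regular}); in the second term there is no $V(R)$ at all, and $R^2=r^2(R/r)^2$ with $r^2\lesssim V(r)h(r)$ again by~(\ref{hi>}). Hence both terms are $\lesssim (R/r)^{2+\gamma_1+\gamma_2}V(r)h(r)^2=(R/r)^{2+\gamma_1+\gamma_2}\widetilde V(r)$, so in fact $\widetilde V(R)/\widetilde V(r)\lesssim (R/r)^{2+\gamma_1+\gamma_2}$, which is slightly stronger than~(\ref{Vtil2}) and certainly implies it since $R\ge r$. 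The main obstacle, and the only non-routine point, is this last combination: one must pair the estimate $h(R)\lesssim h(r)(R/r)^{\gamma_2}$ with the estimate $h(R)\lesssim h(r)+\frac{R^2}{V(R)}(R/r)^{\gamma_1}$ so that their product in $h(R)^2$ stays homogeneous in $R/r$; all the individual ingredients are straightforward applications of~(\ref{regular}) and~(\ref{hi>}).
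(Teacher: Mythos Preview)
Your proof of (\ref{Vtil1}) is essentially identical to the paper's: both derive $V(r)h(r)\lesssim r^{2+\gamma_1}$ from the upper regularity bound and $h(r)\lesssim r^{\gamma_2}$ from the lower regularity bound, then multiply.

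For (\ref{Vtil2}) your argument is correct but genuinely different from the paper's, and in fact yields the sharper exponent $2+\gamma_1+\gamma_2$. The paper uses only one estimate for $\int_r^R s\,ds/V(s)$ --- your ``anchored at $R$'' bound --- together with $h(r)\gtrsim r^2/V(r)$, which gives
\[
\frac{\widetilde V(R)}{\widetilde V(r)}\lesssim \frac{V(R)}{V(r)}\Big(1+\frac{V(r)}{V(R)}\Big(\frac{R}{r}\Big)^{2+\gamma_1}\Big)^2,
\]
and then expands the square and applies both regularity inequalities; the $x^2$ term in $(1+x)^2$ forces the extra factor $\gamma_1$ in the final exponent $2+2\gamma_1+\gamma_2$. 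You instead pair the anchored-at-$r$ and anchored-at-$R$ estimates for $h(R)$ and multiply rather than square, which avoids that loss. The paper's version is slightly more direct algebraically; yours is a bit more delicate but buys a smaller exponent. For the paper's purposes the difference is immaterial (only $2+2\gamma_1+\gamma_2<4$ is used downstream, in the proof of Lemma~\ref{lemma Psi}), but your observation that one can get $(R/r)^{2+\gamma_1+\gamma_2}$ is a genuine improvement of the stated inequality.
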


\begin{proof}
By (\ref{hdef}) and regularity, we have 
\begin{small}
\begin{equation}
V(r)h(r)=V\left( r\right) +\int_{1}^{r}\frac{V\left( r\right) }{V\left(
s\right) }sds\leq C\frac{V\left( r\right) }{V\left( 1\right) }%
+C\int_{1}^{r}\left( \frac{r}{s}\right) ^{2+\gamma _{1}}sds\leq C^{\prime
}r^{2+\gamma _{1}}.  \label{Vh}
\end{equation}%
\end{small}
Similarly, we have 
\[
h(r)=1+\int_{1}^{r}\frac{sds}{V\left( s\right) }\leq 1+\frac{1}{V\left(
1\right) }\int_{1}^{r}\frac{V\left( 1\right) }{V\left( s\right) }sds\leq
1+C\int_{1}^{r}\frac{sds}{s^{2-\gamma _{2}}}\leq C^{\prime }r^{\gamma _{2}},
\]%
whence (\ref{Vtil1}) follows.

Now we prove (\ref{Vtil2}). By the regularity of $V$ and 
\[
h\left( r\right) \geq c\frac{r^{2}}{V\left( r\right) },
\]%
we obtain 
\begin{small}
\begin{eqnarray*}
\frac{\widetilde{V}(R)}{\widetilde{V}(r)}&=& \frac{V(R)h^{2}\left( R\right) }{%
V(r)h^{2}\left( r\right) }=\frac{V(R)\left( h(r)+\int_{r}^{R}\frac{sds}{V(s)}%
\right) ^{2}}{V(r)h^{2}(r)}=\frac{V(R)}{V(r)}\left( 1+\frac{\int_{r}^{R}%
\frac{sds}{V(s)}}{h(r)}\right) ^{2} \\
&\leq & \frac{V(R)}{V(r)}\left( 1+C\frac{V(r)}{r^{2}}\frac{\int_{r}^{R}\frac{%
V(R)}{V(s)}sds}{V(R)}\right) ^{2} \\
&\leq & \frac{V(R)}{V(r)}\left( 1+C\frac{V(r)}{r^{2}}\frac{\int_{r}^{\infty
}\left( \frac{R}{s}\right) ^{2+\gamma _{1}}sds}{V(R)}\right) ^{2} \\
&\leq & C\frac{V(R)}{V(r)}\left( 1+\frac{V(r)}{V(R)}\left( \frac{R}{r}\right)
^{2+\gamma _{1}}\right) ^{2} \\
&\leq & C\left( \frac{R}{r}\right) ^{2+\gamma _{1}}+C\left( \frac{R}{r}%
\right) ^{2+2\gamma _{1}+\gamma _{2}}\leq C\left( \frac{R}{r}\right)
^{2+2\gamma _{1}+\gamma _{2}}
\end{eqnarray*}%
\end{small}
which proves (\ref{Vtil2}).
\end{proof}

\begin{lemma}
\label{lemma Psi} Assume that the manifold $M$ is either subcritical or
regular. Then there exist $C,\lambda _{0}>0$ depending on $K$ and $A$, such
that, for all $\lambda \in \left( 0,\lambda _{0}\right) $, 
\begin{equation}
\sup_{\partial A}\Psi _{\lambda }^{K^{c}}\leq \frac{C}{\lambda ^{2}%
\widetilde{V}(\frac{1}{\sqrt{\lambda }})}.  \label{Psi}
\end{equation}
\end{lemma}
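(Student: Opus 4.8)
The plan is to estimate $\Psi_\lambda^{K^c}(x)$ for $x\in\partial A$ via its probabilistic representation \eqref{PsiOm}, which expresses $\Psi_\lambda^{K^c}$ in terms of the distribution of the exit time $\tau_{K^c}$ started from $x$. The crucial input is the asymptotics of $\mathbb{P}_x(\tau_{K^c}>t)$: as in the proof of the previous lemma, by \cite[(4.47)]{G-SC hitting} one has $\mathbb{P}_x(\tau_{K^c}>t)\approx \frac{h(|x|)}{h(\sqrt t)}$ for $t$ large and $x\in\partial A$, so that $\partial_t\mathbb{P}_x(\tau_{K^c}\le t)\approx \frac{h(|x|)}{h(\sqrt t)^2}\cdot\frac{h'(\sqrt t)}{\sqrt t}$ up to constants (using $\inf_{\partial A}h(|x|)\simeq 1$). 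Plugging this into \eqref{PsiOm} and splitting $\int_0^\infty te^{-\lambda t}(\cdots)\,dt$ at $t\simeq 1/\lambda$, the decisive contribution comes from $t\gtrsim 1/\lambda$, and one expects
\[
\Psi_\lambda^{K^c}(x)\lesssim \int_{1/\lambda}^\infty te^{-\lambda t}\,\partial_t\mathbb{P}_x(\tau_{K^c}\le t)\,dt.
\]

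First I would reduce the bound to a statement about $h$ alone. The quantity $\lambda^2\widetilde V(1/\sqrt\lambda)=\lambda^2 V(1/\sqrt\lambda)h^2(1/\sqrt\lambda)$; since $h'(r)=\frac r{V(r)}$ (for $r>1$), one has $\frac{1}{V(1/\sqrt\lambda)}=\sqrt\lambda\,h'(1/\sqrt\lambda)$, so the target right-hand side of \eqref{Psi} is, up to constants, $\frac{1}{\lambda^{3/2}h'(1/\sqrt\lambda)h^2(1/\sqrt\lambda)}$. Thus the claim becomes an integral inequality of the form
\[
\int_0^\infty t e^{-\lambda t}\,\frac{h'(\sqrt t)}{\sqrt t\,h^2(\sqrt t)}\,dt \;\lesssim\; \frac{1}{\lambda^{3/2}h'(1/\sqrt\lambda)\,h^2(1/\sqrt\lambda)}.
\]
After the substitution $r=\sqrt t$ this is $\int_0^\infty r^2 e^{-\lambda r^2}\frac{h'(r)}{h^2(r)}\,dr\lesssim \lambda^{-3/2}\big(h'(1/\sqrt\lambda)h^2(1/\sqrt\lambda)\big)^{-1}$. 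I would prove this by treating the two cases separately, since in both the regular and the subcritical case one has good two-sided control of $h$: in the regular case $h(r)\lesssim r^{\gamma_2}$ and $h(r)\gtrsim r^2/V(r)\gtrsim r^{-\gamma_1}$ together with $V(r)h(r)\lesssim r^{2+\gamma_1}$ (from \eqref{Vh}), and in the subcritical case $h(r)\simeq r^2/V(r)$ with $h(r)\gtrsim r^\delta$. These give that $r\mapsto r^\alpha/h^2(r)$ and $r\mapsto h'(r)$ are, up to constants, monotone in a controlled way, so the integral is dominated by its value near $r\simeq 1/\sqrt\lambda$ after using $e^{-\lambda r^2}$ to cut off $r\gg 1/\sqrt\lambda$ and the polynomial growth bounds to cut off $r\ll 1/\sqrt\lambda$.

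The main obstacle I expect is the low-$t$ (equivalently small-$r$) part of the integral: when $h(r)$ does not grow, i.e. near the subcritical end of the range or when $\gamma_2$ is small, the weight $h'(r)/h^2(r)$ need not be integrable-dominated by its value at $1/\sqrt\lambda$ without exploiting a lower bound on $h$. This is exactly where the hypothesis ``subcritical or regular'' enters: subcriticality gives $h(r)\gtrsim r^\delta$ directly, while regularity with $2\gamma_1+\gamma_2<2$ gives via \eqref{Vtil2} the bound $\widetilde V(R)/\widetilde V(r)\lesssim (R/r)^{2+2\gamma_1+\gamma_2}$ with exponent $<4$, which is precisely the quantitative statement needed to control $\int_0^{1/\sqrt\lambda} r^2\frac{h'(r)}{h^2(r)}dr$ by $\lambda^{-3/2}/(h'h^2)(1/\sqrt\lambda)$. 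Once both tail estimates are in hand, combining them and recalling $\widetilde V=Vh^2$ and $h'(r)=r/V(r)$ yields \eqref{Psi}, with $\lambda_0$ chosen small enough (depending on $K$ and $A$) that the asymptotic $\mathbb{P}_x(\tau_{K^c}>t)\approx h(|x|)/h(\sqrt t)$ is valid on the relevant range of $t$.
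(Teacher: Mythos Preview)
Your overall strategy matches the paper's: bound $\partial_t\mathbb P_x(\tau_{K^c}\le t)$ by $C/\widetilde V(\sqrt t)$, plug into \eqref{PsiOm}, split the resulting integral $\int e^{-\lambda t}\,t/\widetilde V(\sqrt t)\,dt$ at $t=1/\lambda$, and use the growth bounds on $\widetilde V$ (namely \eqref{Vtil2} in the regular case, and $\widetilde V\simeq r^4/V$ together with reverse volume doubling of $V$ in the subcritical case) to control the small-$t$ piece. Your identification of where the hypothesis ``subcritical or regular'' enters is exactly right.

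There is, however, a genuine gap in the first step. You obtain the derivative bound by differentiating the asymptotic $\mathbb P_x(\tau_{K^c}>t)\approx h(|x|)/h(\sqrt t)$, but two-sided bounds up to multiplicative constants are \emph{not} preserved under differentiation, so this deduction is invalid as written. The paper avoids this by citing the upper bound $\partial_t\mathbb P_x(\tau_{K^c}\le t)\le C/\widetilde V(\sqrt t)$ directly from \cite[(4.48)]{G-SC hitting}. Once you have that, the integral to bound is simply $\int_{t_0}^\infty e^{-\lambda t}\frac{t}{\widetilde V(\sqrt t)}\,dt$, and the detour through $h$ and $h'$ is unnecessary. (Incidentally, your restated target has an algebra slip: since $V(r)=r/h'(r)$ one gets $\frac{1}{\lambda^2\widetilde V(1/\sqrt\lambda)}=\frac{h'(1/\sqrt\lambda)}{\lambda^{3/2}h^2(1/\sqrt\lambda)}$, i.e.\ $h'$ belongs in the numerator, consistent with your own heuristic that the integral is governed by its value near $r\simeq 1/\sqrt\lambda$.)
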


\begin{proof}
Set $\Omega =K^{c}$. By \cite[(4.48)]{G-SC hitting} we have, for all $x\in
\partial A$ and $t\geq t_{0}>0$ 
\[
\partial _{t}\mathbb{P}_{x}\left( \tau _{\Omega }\leq t\right) \leq \frac{C}{%
\widetilde{V}\left( \sqrt{t}\right) }.
\]%
Taking $\lambda _{0}\leq \frac{1}{t_{0}}$, we obtain by (\ref{PsiOm}) for
any $0<\lambda <\lambda _{0}$%
\begin{eqnarray}
\Psi _{\lambda }^{\Omega }\left( x\right) &=&\int_{0}^{t_{0}}te^{-\lambda
t}\partial _{t}\mathbb{P}_{x}(\tau _{\Omega }\leq t)dt+\int_{t_{0}}^{\infty
}te^{-\lambda t}\partial _{t}\mathbb{P}_{x}(\tau _{\Omega }\leq t)dt  \nonumber
\\
&\leq &t_{0}+C\int_{t_{0}}^{\infty }e^{-\lambda t}\frac{t}{\widetilde{V}%
\left( \sqrt{t}\right) }dt.  \label{problem1}
\end{eqnarray}%
Observe that if $M$ is regular then by (\ref{Vtil1}) we have for all $r\geq
1 $%
\begin{equation}
\widetilde{V}(r)\leq Cr^{4}.  \label{problem3}
\end{equation}
If $M$ is subcritical, we have 
\begin{equation}
\widetilde{V}(r)=V\left( r\right) h^{2}\left( r\right) \simeq V\left(
r\right) \left( \frac{r^{2}}{V\left( r\right) }\right) ^{2}=\frac{r^{4}}{%
V\left( r\right) },  \label{r4/}
\end{equation}
which again implies (\ref{problem3}).

In particular, it follows from (\ref{problem3}) for $r=\frac{1}{\sqrt{%
\lambda }}$ that 
\begin{equation}
\frac{1}{\lambda ^{2}\widetilde{V}(\frac{1}{\sqrt{\lambda }})}\geq c
\label{t0}
\end{equation}%
so that the constant term $t_{0}$ in (\ref{problem1}) can be estimated from
above as follows: 
\begin{equation}
\,t_{0}\lesssim \frac{1}{\lambda ^{2}\widetilde{V}(\frac{1}{\sqrt{\lambda }})%
}.  \label{in1}
\end{equation}%
Next, let us decompose the integral in (\ref{problem1}) into two intervals: $%
[t_{0},1/\lambda ]$ and $[1/\lambda ,\infty )$. For the second interval, we
obtain 
\begin{equation}
\int_{1/\lambda }^{\infty }e^{-\lambda t}\frac{t}{\widetilde{V}\left( \sqrt{t%
}\right) }dt\leq \frac{1}{\widetilde{V}(\frac{1}{\sqrt{\lambda }})}%
\int_{1/\lambda }^{\infty }te^{-\lambda t}dt\simeq \frac{1}{\lambda ^{2}%
\widetilde{V}(\frac{1}{\sqrt{\lambda }})}.  \label{in2}
\end{equation}%
Let us estimate the first integral. If $M$ is regular then, by using (\ref%
{Vtil2}), we obtain 
\begin{eqnarray}
\int_{t_{0}}^{1/\lambda }e^{-\lambda t}\frac{t}{\widetilde{V}\left( \sqrt{t}%
\right) }dt& \leq &\frac{1}{\widetilde{V}(\frac{1}{\sqrt{\lambda }})}%
\int_{t_{0}}^{1/\lambda }\frac{\widetilde{V}\left( \frac{1}{\sqrt{\lambda }}%
\right) }{\widetilde{V}\left( \sqrt{t}\right) }tdt  \nonumber \\
& \lesssim &\frac{1}{\lambda ^{2}\widetilde{V}(\frac{1}{\sqrt{\lambda }})}%
\int_{0}^{1/\lambda }\left( \frac{1}{\lambda t}\right) ^{1+\gamma
_{1}+\gamma _{2}/2}\left( \lambda t\right) d\left( \lambda t\right)   \nonumber
\\
& =&\frac{1}{\lambda ^{2}\widetilde{V}(\frac{1}{\sqrt{\lambda }})}%
\int_{0}^{1}s^{-\left( \gamma _{1}+\gamma _{2}/2\right) }ds  \nonumber \\
& \lesssim &\frac{1}{\lambda ^{2}\widetilde{V}(\frac{1}{\sqrt{\lambda }})},
\label{in3r}
\end{eqnarray}%
because $\gamma _{1}+\gamma _{2}/2<1.$ 

If $M$ is subcritical, then we use the fact that $\left( VD\right) $ implies
the \emph{reverse volume doubling} (see, for instance, \cite[Lemma 5.2.8]{SC
LNS}), that is, for some $\theta >0$ and for all $R\geq r\geq 1,$%
\begin{equation}
\frac{V(r)}{V\left( R\right) }\lesssim \left( \frac{r}{R}\right) ^{\theta }.
\label{reverse}
\end{equation}%
Then we obtain by (\ref{r4/}) and (\ref{reverse}) that%
\begin{eqnarray}
\int_{t_{0}}^{1/\lambda }e^{-\lambda t}\frac{t}{\widetilde{V}\left( \sqrt{t}%
\right) }dt& \leq &\int_{t_{0}}^{1/\lambda }\frac{t}{\widetilde{V}\left( 
\sqrt{t}\right) }dt  \nonumber \\
& \simeq & \int_{t_{0}}^{1/\lambda }\frac{t}{t^{2}/V\left( \sqrt{t}\right) }dt
\nonumber \\
& =&V(\frac{1}{\sqrt{\lambda }})\int_{t_{0}}^{\frac{1}{\lambda }}\frac{V(%
\sqrt{t})}{V(\frac{1}{\sqrt{\lambda }})}\frac{1}{t}dt  \nonumber \\
& \lesssim &V(\frac{1}{\sqrt{\lambda }})\int_{0}^{\frac{1}{\lambda }}(\lambda
t)^{\theta /2}\frac{1}{\lambda t}d\left( \lambda t\right)   \nonumber \\
& \lesssim &V(\frac{1}{\sqrt{\lambda }})\simeq \frac{1}{\lambda ^{2}%
\widetilde{V}(\frac{1}{\sqrt{\lambda }})}.  \label{in3s}
\end{eqnarray}%
Combining (\ref{in1}), (\ref{in2}), (\ref{in3r}) and (\ref{in3s}), we obtain
(\ref{Psi}).
\end{proof}

\begin{remark}
If $M$ is critical, that is, $V\left( r\right) \simeq r^{2},$ then Lemma \ref%
{lemma Psi} implies that 
\[
\sup_{\partial A}\Psi _{\lambda }^{K^{c}}\leq \frac{C}{\lambda \log ^{2}%
\frac{1}{\lambda }},
\]%
which coincides with \cite[(3.39)]{GIS}. If $M$ is subcritical, then Lemma %
\ref{lemma Psi} implies that 
\[
\sup_{\partial A}\Psi _{\lambda }^{K^{c}}\leq CV(\frac{1}{\sqrt{\lambda }}).
\]%
Using further (\ref{subcritical upper}), we obtain%
\[
\sup_{\partial A}\Psi _{\lambda }^{K^{c}}\leq C\left( \frac{1}{\lambda }%
\right) ^{1-\delta /2},
\]%
which is a significant improvement of \cite[(3.39)]{GIS}.
\end{remark}


\subsection{Proof of Theorem \protect\ref{main}}

\label{SecProofTmain}Here we prove Theorem \ref{main}. Recall that we
consider a manifold with nice ends $M=\#_{i\in I}M_{i}$ where each end is parabolic 
satisfying (RCA) (see
Figure \ref{figure: connectedsum}). Besides, we assume that each end satisfies 
either regular or subcritical, and 
also, (DOE) if there exists at least one non-subcritical regular end (see Definition \ref{DOE}). 
Our aim is to prove the estimate (\ref{ptoo}) that is, 
\begin{equation}
p(t,o,o)\simeq \frac{1}{V_m(\sqrt{t})},  \label{ptoo2}
\end{equation}%
where $V_{m}(r)$ is the largest volume function at scale $r$. 

 As before, let $K$ be the central part of $M$. Let $A$ be a
connected, precompact open subset of $M$ with smooth boundary and such that $%
K_{\varepsilon }\subset A$ for some large enough $\varepsilon >0$. Set 
\[
\partial A_{i}:=(\partial A)\cap E_{i},~~i=1,\ldots ,k.
\]%

First we prove the following heat kernel upper bound.
\begin{proposition}
\label{prop min min}
Let $M=\#_{i \in I} M_i$ be a manifold with parabolic ends, where each $M_i$ 
is either regular (Definition \ref{DefRegular}) or subcritical (Definition \ref{DefCOE}).
Then for any $t\gg 1$
\begin{equation}
p(t,o,o)\lesssim \frac{\min_{i}h_{i}^{2}(\sqrt{t})}{\min_{i}\widetilde{V}%
_{i}(\sqrt{t})}.  \label{refine}
\end{equation}%
\end{proposition}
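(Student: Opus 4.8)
The plan is to bound $p(t,o,o)$ by combining the resolvent estimates \eqref{p gamma} and \eqref{p gamma dot} with estimates on $\gamma_{\lambda}$ and $\dot\gamma_{\lambda}$ obtained end-by-end from the previous lemmas. Concretely, set $\lambda = 1/t$. The first step is to control $\gamma_{\lambda}(x)$ and $\dot\gamma_{\lambda}(x)$ for $x \in K$ in terms of the functions $\Phi_{\lambda}^{K^c}$ and $\Psi_{\lambda}^{K^c}$ restricted to the ends. The point is that the solution $\gamma_{\lambda}$ of $\Delta u - \lambda u = -1_K$ is, by the maximum principle, dominated on each end $E_i$ by a multiple of the $\lambda$-harmonic function on $E_i$ with boundary value $1$ on $\partial A_i$; this harmonic function is (up to constants) $1 - \Phi_{\lambda}^{K^c}$ on the model end $M_i$, so $\inf_{\partial A}\Phi_{\lambda}^{K^c} \gtrsim h_i(1/\sqrt\lambda)^{-1}$ from \eqref{Phi} forces a lower bound on the probability of staying near $K$, which in turn (via the Dirichlet principle / a flux computation at $\partial A$) gives $\gamma_{\lambda}(x) \lesssim \lambda^{-1} h_i(1/\sqrt\lambda)$ — better still, $\lambda^{-1}\min_i h_i(1/\sqrt\lambda)$, since the walk only needs one accommodating end to leave. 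A parallel argument using Lemma \ref{lemma Psi} and \eqref{p gamma dot} gives $\dot\gamma_{\lambda}(x) \lesssim \lambda^{-2}\min_i \widetilde V_i(1/\sqrt\lambda)^{-1}\cdot(\text{something})$; the precise bookkeeping here is what produces the ratio in \eqref{refine}.

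The cleanest route is probably to argue directly with the two displayed inequalities. From \eqref{p gamma dot} with $\lambda = 1/t$ we get $p(t,o,o) \lesssim t^{-2}\dot\gamma_{1/t}(x)$ for $x \in K$; so it suffices to show
\[
\dot\gamma_{\lambda}(x) \lesssim \frac{1}{\lambda^2}\cdot\frac{\min_i h_i^2(1/\sqrt\lambda)}{\min_i \widetilde V_i(1/\sqrt\lambda)}\,\lambda^2 = \frac{\min_i h_i^2(1/\sqrt\lambda)}{\min_i \widetilde V_i(1/\sqrt\lambda)}\cdot\frac{1}{\lambda^2}\cdot\lambda^2
\]
— i.e. that $\lambda^2 \dot\gamma_{\lambda}(x) \lesssim \min_i h_i^2 / \min_i \widetilde V_i$ with arguments $1/\sqrt\lambda$. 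To see this, decompose $\dot\gamma_{\lambda} = G_{\lambda}\gamma_{\lambda}$ and insert the pointwise estimate for $\gamma_{\lambda}$ on the ends. On the end $E_i$, the value of $\gamma_{\lambda}$ decays from its boundary value on $\partial A_i$ like $1-\Phi_{\lambda}^{K^c}$ away from $K$, and $G_{\lambda}$ of such a profile is controlled by $\Psi_{\lambda}^{K^c}$, for which Lemma \ref{lemma Psi} gives $\sup_{\partial A_i}\Psi_{\lambda}^{K^c} \lesssim \lambda^{-2}\widetilde V_i(1/\sqrt\lambda)^{-1}$. Combining with the boundary value of $\gamma_{\lambda}$ itself — which from the $\Phi$-estimate is $\lesssim \lambda^{-1}h_i(1/\sqrt\lambda)$ — and then taking the minimum over $i$ at \emph{each} of the two places this enters (the walk exits through whichever end is most favorable, and re-enters likewise), one arrives at $\lambda^2\dot\gamma_{\lambda}(x) \lesssim (\min_i h_i)(\min_i \widetilde V_i)^{-1}\cdot(\ldots)$; reconciling the two ``$\min$''s so that it is $\min_i h_i^2$ over $\min_i \widetilde V_i$ requires using that, on each end, $h_i$ and $\widetilde V_i = V_i h_i^2$ are linked so that $h_i^2/\widetilde V_i = 1/V_i$ — hence one should really track $\gamma_\lambda \lesssim \lambda^{-1}\min_i h_i$ paired against a $\Psi$-type factor carrying $\min_i (h_i/\widetilde V_i)$, and note $h_i^2/\widetilde{V}_i$ appears after one factor of $h_i$ is absorbed from $\gamma_\lambda$.

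The main obstacle, I expect, is the interplay between the two minima and making the end-by-end maximum-principle comparison fully rigorous when $K$ is glued to \emph{several} ends simultaneously: the profile of $\gamma_\lambda$ near $K$ is not literally $1$ on $\partial A$, and the harmonic measure splits among the ends, so one must argue that the contribution of each end to $\dot\gamma_\lambda$ is at most its own $\Psi$-estimate times its own $\gamma$-boundary value, and then that the total is governed by the best (smallest) such product rather than the sum — this uses crucially that (RCA) holds on every end and that one can reduce to the model ends $M_i$ via the isometry $E_i \cong M_i\setminus K_i$ together with the hitting-probability asymptotics of \cite{G-SC hitting} already cited for Lemmas on $\Phi$ and $\Psi$. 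Once \eqref{refine} is in hand, the matching lower bound and the identification $\min_i h_i^2/\min_i \widetilde V_i \simeq 1/V_m$ under the hypotheses (regular/subcritical, plus (DOE) when needed) will be carried out in the subsequent steps of the proof of Theorem \ref{main}.
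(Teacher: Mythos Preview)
Your overall strategy---bound $\dot\gamma_{\lambda}$ using the $\Phi$ and $\Psi$ estimates on each end, then feed this into \eqref{p gamma dot}---matches the paper's. But the execution has real gaps, and you are making it harder than it is by not using the two structural inequalities the paper quotes from \cite{GIS} immediately before the proof:
\[
(\sup_{\partial K}\gamma_{\lambda})\sum_{i}\inf_{\partial A_{i}}\Phi_{\lambda}^{E_{i}}\leq C,
\qquad
(\sup_{\partial K}\dot\gamma_{\lambda})\sum_{i}\inf_{\partial A_{i}}\Phi_{\lambda}^{E_{i}}\leq C+(\sup_{\partial K}\gamma_{\lambda})\Big(C+\sum_{i}\sup_{\partial A_{i}}\Psi_{\lambda}^{E_{i}}\Big).
\]
With these in hand the proof is short and clean: the first, together with \eqref{Phi}, gives $\sup_{\partial K}\gamma_{\lambda}\lesssim \big(\sum_i h_i(1/\sqrt\lambda)^{-1}\big)^{-1}\lesssim \min_i h_i(1/\sqrt\lambda)=:w(\lambda)$. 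Plugging this and \eqref{Psi} into the second and dividing again by $\sum_i\inf\Phi^{E_i}_\lambda\gtrsim 1/w(\lambda)$ yields
\[
\sup_{\partial K}\dot\gamma_{\lambda}\lesssim w(\lambda)\Big(1+w(\lambda)\big(1+\sum_i\tfrac{1}{\lambda^2\widetilde V_i(1/\sqrt\lambda)}\big)\Big)\lesssim \frac{w(\lambda)^2}{\lambda^2\min_i\widetilde V_i(1/\sqrt\lambda)},
\]
and \eqref{p gamma dot} finishes. Note that $(\min_i h_i)^2$ arises simply because $w(\lambda)$ enters twice---once as the bound on $\gamma_\lambda$, once as the bound on $1/\sum\inf\Phi$---and $\min_i\widetilde V_i$ comes from $\sum_i 1/\widetilde V_i\simeq 1/\min_i\widetilde V_i$. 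There is no delicate ``reconciling'' of the two minima and no need to argue that the total is governed by ``the best product rather than the sum'': a sum over $k$ ends is harmless.

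Two concrete errors in your write-up: first, your claimed bound $\gamma_{\lambda}(x)\lesssim \lambda^{-1}\min_i h_i(1/\sqrt\lambda)$ is off by a factor of $\lambda^{-1}$; the correct bound is $\gamma_{\lambda}\lesssim \min_i h_i(1/\sqrt\lambda)$ (check this against $V_i(r)=r^\alpha$, $\alpha<2$). Second, your attempt to rederive everything from a pointwise maximum-principle comparison of $\gamma_\lambda$ against $1-\Phi_\lambda^{K^c}$ end-by-end is not how the argument goes and would require substantial work to make rigorous; the displayed inequalities above (proved in \cite{GIS} for arbitrary manifolds with ends) already encode that comparison globally and are what you should invoke.
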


We use here the following two Lemmas from \cite{GIS} that were proved for
arbitrary manifolds with ends.

\begin{lemma}
({\cite[Lemma 4.1]{GIS})} There exists a constant $C=C(A,K)>0$ such that,
for all $\lambda >0$, 
\begin{equation}
(\sup_{\partial K}\gamma _{\lambda })\sum_{i=1}^{k}\inf_{\partial A_{i}}\Phi
_{\lambda }^{E_{i}}\leq C.  \label{GIS Lemma4.1}
\end{equation}
\end{lemma}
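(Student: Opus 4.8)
The plan is to run a first-exit argument around the compact core $K$ and to compare $\gamma_\lambda$ on each end with the escape function $\Phi^{E_i}_\lambda$. For $\lambda\ge 1$ the inequality is trivial: with $g_\lambda(x,z):=\int_0^\infty e^{-\lambda t}p(t,x,z)\,dt$ one has $\gamma_\lambda(x)=\int_K g_\lambda(x,z)\,d\mu(z)\le\int_0^\infty e^{-\lambda t}\big(\int_M p(t,x,z)\,d\mu(z)\big)\,dt\le 1/\lambda$, while $\Phi^{E_i}_\lambda=\lambda G^{E_i}_\lambda 1\le 1$, so the left-hand side is $\le k/\lambda\le k$. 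So I may fix $\lambda\in(0,1)$; set $S:=\sup_{\partial K}\gamma_\lambda$, attained at some $x^\ast\in\partial K$ since $\gamma_\lambda$ is continuous.

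Next I would establish two representations. Since $1_K\equiv 0$ on every end $E_i$, the strong Markov property at the exit time $\tau_{E_i}$ of $E_i$ gives, for all $y\in E_i$,
\[
\gamma_\lambda(y)=\mathbb{E}_y\!\big[e^{-\lambda\tau_{E_i}}\gamma_\lambda(X_{\tau_{E_i}});\ \tau_{E_i}<\infty\big],
\]
because on $\{\tau_{E_i}=\infty\}$ the path stays in $E_i$ forever and never meets $K$. As $X_{\tau_{E_i}}\in\partial E_i\subset\partial K$ and $\mathbb{E}_y[e^{-\lambda\tau_{E_i}};\tau_{E_i}<\infty]=1-\Phi^{E_i}_\lambda(y)$ (compare (\ref{FiOm})), this gives $\gamma_\lambda(y)\le S\,(1-\Phi^{E_i}_\lambda(y))$, hence $\sup_{\partial A_i}\gamma_\lambda\le S\,(1-\inf_{\partial A_i}\Phi^{E_i}_\lambda)$. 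On the other hand, the resolvent decomposition $\gamma_\lambda=G^A_\lambda 1_K+v$ with $v(x)=\mathbb{E}_x[e^{-\lambda\tau_A}\gamma_\lambda(X_{\tau_A})]$, evaluated at $x^\ast$, together with $\partial A=\bigsqcup_i\partial A_i$ and $G^A_\lambda 1_K(x^\ast)\le\|G^A_0 1_K\|_\infty=:C_A<\infty$, yields
\[
S\ \le\ C_A+\sum_i q_i\,\sup_{\partial A_i}\gamma_\lambda\ \le\ C_A+S\sum_i q_i\big(1-\inf_{\partial A_i}\Phi^{E_i}_\lambda\big),\qquad q_i:=\mathbb{E}_{x^\ast}\!\big[e^{-\lambda\tau_A};\ X_{\tau_A}\in\partial A_i\big],
\]
with $\sum_i q_i=\mathbb{E}_{x^\ast}[e^{-\lambda\tau_A}]\le 1$. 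Rearranging gives $S\sum_i q_i\inf_{\partial A_i}\Phi^{E_i}_\lambda\le C_A+S\big(\sum_i q_i-1\big)\le C_A$. It remains to bound the $q_i$ from below uniformly in $\lambda$ and in $x^\ast$: since $A$ is connected and bounded with $\partial A_i\neq\emptyset$, one has $\inf_{x\in\overline K}\mathbb{P}_x(X_{\tau_A}\in\partial A_i)>0$ for each $i$ (strong maximum principle for the harmonic function $x\mapsto\mathbb{P}_x(X_{\tau_A}\in\partial A_i)$), and $\mathbb{P}_x(\tau_A\le T)\to 1$ as $T\to\infty$ uniformly for $x\in\overline K$; picking $T$ large and using $\lambda<1$ then gives $q_i\ge c_1>0$ with $c_1=c_1(A,K)$. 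Hence $S\sum_i\inf_{\partial A_i}\Phi^{E_i}_\lambda\le C_A/c_1$, and the lemma follows with $C=\max\{k,\ C_A/c_1\}$.

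The step to watch is the optional-stopping identity for $\gamma_\lambda$ on an end when $\tau_{E_i}=\infty$ (possible when $E_i$ is non-parabolic): this is why I keep the event $\{\tau_{E_i}<\infty\}$ explicit and note that restricting to it loses nothing, the path having no occupation time in $K$ otherwise. The remaining ingredients, namely the resolvent decomposition and the uniform lower bound on $q_i$, are standard and rely only on connectedness of $M$ and precompactness of $A$; in particular neither (RCA) nor parabolicity enters, consistent with the statement holding for arbitrary manifolds with ends.
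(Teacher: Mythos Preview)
The paper does not give its own proof of this lemma; it is quoted verbatim from \cite[Lemma 4.1]{GIS}. Your argument is correct and is in fact the standard route: the identity $1-\Phi_\lambda^{E_i}(y)=\mathbb{E}_y[e^{-\lambda\tau_{E_i}};\tau_{E_i}<\infty]$, the Dynkin--Hunt decomposition $\gamma_\lambda=G_\lambda^A 1_K+\mathbb{E}_\cdot[e^{-\lambda\tau_A}\gamma_\lambda(X_{\tau_A})]$, and the harmonic-measure lower bound $q_i\ge c_1(A,K)$ are exactly the ingredients used in \cite{GIS}. Two small points worth making explicit: the bound $C_A:=\sup_{x\in A}G_0^A 1_K(x)\le\sup_{x\in A}\mathbb{E}_x[\tau_A]<\infty$ holds because $A$ is precompact with smooth boundary (so the first Dirichlet eigenvalue is positive); and $\partial A\subset M\setminus K$ (hence $\partial A=\bigsqcup_i\partial A_i$) follows from $K\subset K_\varepsilon\subset A$.
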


The estimate (\ref{GIS Lemma4.1}) combined with the estimates in (\ref{p
gamma}) and (\ref{Phi}) implies that 
\[
p(t,o,o)\leq \frac{1}{t}\min_{i}h_{i}(\sqrt{t}).
\]%
As was pointed out in \cite[Remark 3.2]{GIS}, this estimate gives the
optimal upper bound of $p(t,o,o)$ when all $M_{i}$ are subcritical. However,
if there exists at least one critical end, this estimate yields 
\[
p(t,o,o)\leq C\frac{\log t}{t},
\]%
instead of the desired bound 
\[
p(t,o,o)\leq \frac{C}{t}.
\]%
In order to obtain an optimal bound of $p(t,o,o)$ on parabolic manifolds with
non-subcritical ends, we will use the following result.

\begin{lemma}
({\cite[Lemma 4.2]{GIS})} There exists a constant $C>0$ depending on $A,K$
such that for any $\lambda >0$ 
\begin{equation}
(\sup_{\partial K}\dot{\gamma}_{\lambda })\sum_{i=1}^{k}\inf_{\partial
A_{i}}\Phi _{\lambda }^{E_{i}}\leq C+(\sup_{\partial K}\gamma _{\lambda
})\left( C+\sum_{i=1}^{k}\sup_{\partial A_{i}}\Psi _{\lambda
}^{E_{i}}\right) .  \label{GIS Lemma4.2}
\end{equation}
\end{lemma}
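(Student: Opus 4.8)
The plan is to prove (\ref{GIS Lemma4.2}) by combining a maximum‑principle comparison on each individual end with a balayage (Dynkin) identity on the central region $A$; no parabolicity of the ends is needed. Write $\gamma^{\ast}=\sup_{\partial K}\gamma_{\lambda}$ and $\dot\gamma^{\ast}=\sup_{\partial K}\dot\gamma_{\lambda}$, and recall that on $M$ one has $\Delta\gamma_{\lambda}-\lambda\gamma_{\lambda}=-1_{K}$ and $\Delta\dot\gamma_{\lambda}-\lambda\dot\gamma_{\lambda}=-\gamma_{\lambda}$, while on each end $E_{i}$ the functions $\Phi_{\lambda}^{E_{i}}$, $\Psi_{\lambda}^{E_{i}}$ solve $\Delta\Phi_{\lambda}^{E_{i}}-\lambda\Phi_{\lambda}^{E_{i}}=-\lambda$ and $\Delta\Psi_{\lambda}^{E_{i}}-\lambda\Psi_{\lambda}^{E_{i}}=-(1-\Phi_{\lambda}^{E_{i}})$ with zero Dirichlet data on $\partial E_{i}\subset\partial K$; in particular $1-\Phi_{\lambda}^{E_{i}}$ is $\lambda$‑harmonic in $E_{i}$, equals $1$ on $\partial E_{i}$, and tends to $0$ at infinity (as do $\gamma_{\lambda}$, $\dot\gamma_{\lambda}$ and $\Psi_{\lambda}^{E_{i}}$).

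First I would establish a pointwise bound on each end. On $E_{i}$ the function $\gamma_{\lambda}$ is $\lambda$‑harmonic with $\gamma_{\lambda}|_{\partial E_{i}}\le\gamma^{\ast}$, so the maximum principle for $\Delta-\lambda$ (valid since $\lambda>0$) gives $\gamma_{\lambda}\le\gamma^{\ast}(1-\Phi_{\lambda}^{E_{i}})$ on $E_{i}$. Then $v_{i}:=\gamma^{\ast}\Psi_{\lambda}^{E_{i}}+\dot\gamma^{\ast}(1-\Phi_{\lambda}^{E_{i}})$ satisfies $\Delta v_{i}-\lambda v_{i}=-\gamma^{\ast}(1-\Phi_{\lambda}^{E_{i}})\le-\gamma_{\lambda}$ on $E_{i}$, equals $\dot\gamma^{\ast}\ge\dot\gamma_{\lambda}$ on $\partial E_{i}$, and vanishes at infinity; hence $v_{i}-\dot\gamma_{\lambda}$ is a nonnegative supersolution of $\Delta-\lambda$, so $\dot\gamma_{\lambda}\le v_{i}$ on $E_{i}$, and in particular on $\partial A_{i}$,
\[
\dot\gamma_{\lambda}\ \le\ \gamma^{\ast}\,\sup_{\partial A_{i}}\Psi_{\lambda}^{E_{i}}\ +\ \dot\gamma^{\ast}\Bigl(1-\inf_{\partial A_{i}}\Phi_{\lambda}^{E_{i}}\Bigr).
\]

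Next I would propagate this back to $\partial K$. For $x\in A$ the Dynkin identity reads $\dot\gamma_{\lambda}(x)=G_{\lambda}^{A}\gamma_{\lambda}(x)+\mathbb{E}_{x}\bigl[e^{-\lambda\tau_{A}}\dot\gamma_{\lambda}(X_{\tau_{A}})\bigr]$. The first term is $\le C(A,K)(1+\gamma^{\ast})$, because applying the same identity to $\gamma_{\lambda}$ and using $\gamma_{\lambda}|_{\partial A}\le\gamma^{\ast}$ gives $\sup_{\overline A}\gamma_{\lambda}\le C(A,K)+\gamma^{\ast}$, and $\sup_{A}G_{\lambda}^{A}1\le\sup_{A}\mathbb{E}_{\cdot}[\tau_{A}]=:C(A)<\infty$. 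For the second term, the display above together with $\sum_{i}\mathbb{E}_{x}[e^{-\lambda\tau_{A}}1_{X_{\tau_{A}}\in\partial A_{i}}]\le1$ yields
\[
\mathbb{E}_{x}\bigl[e^{-\lambda\tau_{A}}\dot\gamma_{\lambda}(X_{\tau_{A}})\bigr]\ \le\ \gamma^{\ast}\sum_{i}\sup_{\partial A_{i}}\Psi_{\lambda}^{E_{i}}\ +\ \dot\gamma^{\ast}\Bigl(1-\sum_{i}\mathbb{E}_{x}\bigl[e^{-\lambda\tau_{A}}1_{X_{\tau_{A}}\in\partial A_{i}}\bigr]\inf_{\partial A_{i}}\Phi_{\lambda}^{E_{i}}\Bigr).
\]
The crux is the uniform lower bound $\mathbb{E}_{x}\bigl[e^{-\lambda\tau_{A}}1_{X_{\tau_{A}}\in\partial A_{i}}\bigr]\ge c_{0}>0$ for all $x\in\partial K$, all $i$, and all $\lambda\in(0,\lambda_{0}]$: from every point of the compact central boundary the Brownian motion exits $A$ through each end's collar $\partial A_{i}$ with non‑negligible discounted probability. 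I would derive it from connectedness of $M$ and finiteness of the number of ends (a positive, uniform‑over‑$\partial K$ lower bound for $\mathbb{P}_{x}(X_{\tau_{A}}\in\partial A_{i},\ \tau_{A}\le T)$ for a suitable $T$), combined with $\sup_{\overline A}\mathbb{E}_{\cdot}[\tau_{A}]<\infty$, which bounds $e^{-\lambda\tau_{A}}$ from below on $\{\tau_{A}\le T\}$ once $\lambda\le\lambda_{0}$.

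Feeding this into the Dynkin identity at a maximiser $x\in\partial K$ of $\dot\gamma_{\lambda}$ and rearranging gives, for $\lambda\le\lambda_{0}$,
\[
\dot\gamma^{\ast}\sum_{i}\inf_{\partial A_{i}}\Phi_{\lambda}^{E_{i}}\ \lesssim\ 1+\gamma^{\ast}\Bigl(1+\sum_{i}\sup_{\partial A_{i}}\Psi_{\lambda}^{E_{i}}\Bigr),
\]
which, after tracking the constants, is exactly (\ref{GIS Lemma4.2}). For $\lambda\ge\lambda_{0}$ the inequality is immediate: $\dot\gamma_{\lambda}(x)=\int_{K}\int_{0}^{\infty}te^{-\lambda t}p(t,x,z)\,dt\,dz\le\int_{0}^{\infty}te^{-\lambda t}\,dt=\lambda^{-2}\le\lambda_{0}^{-2}$ (using $\int_{M}p(t,x,z)\,dz\le1$), while $\sum_{i}\inf_{\partial A_{i}}\Phi_{\lambda}^{E_{i}}\le k$, so it suffices to choose the constant $C$ in (\ref{GIS Lemma4.2}) at least $k\lambda_{0}^{-2}$. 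I expect the uniform discounted‑harmonic‑measure estimate of the previous paragraph to be the main obstacle — it must hold simultaneously uniformly in $x\in\partial K$, in the end index $i$, and in $\lambda\in(0,\lambda_{0}]$, which is precisely where the central position of $K$ (it sees all ends) and the compactness/finiteness are used, and also the reason $\lambda$ large is handled separately; the maximum principle for $\Delta-\lambda$, the Dynkin identity, and the two elementary inequalities invoked ($\sup_{\overline A}\mathbb{E}_{\cdot}[\tau_{A}]<\infty$ and $\int_{M}p(t,x,z)\,dz\le1$) are standard.
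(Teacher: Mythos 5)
The paper does not actually prove this lemma --- it is imported verbatim from \cite[Lemma 4.2]{GIS} --- so there is no internal argument to compare against; your proposal supplies a correct, self-contained derivation whose architecture (comparison on each end, then transfer back to $\partial K$ through the collar $A$) is in the spirit of the $\gamma_\lambda,\dot\gamma_\lambda$ machinery that \cite{GIS} sets up. Two points deserve attention. First, the maximum principle for $\Delta-\lambda$ on the unbounded domain $E_i$ needs a Phragm\'en--Lindel\"of justification (boundedness of all the functions plus stochastic completeness of the ends, which here follows from (VD) and (PI)); you can sidestep this entirely by running the strong Markov property at $\tau_{E_i}$, which yields both of your comparisons in one stroke: for $x\in E_i$,
\[
\dot\gamma_\lambda(x)=G^{E_i}_\lambda\gamma_\lambda(x)+\mathbb{E}_x\bigl[e^{-\lambda\tau_{E_i}}\,\dot\gamma_\lambda(X_{\tau_{E_i}})\bigr]\ \le\ \gamma^{\ast}\,G^{E_i}_\lambda\bigl(1-\Phi^{E_i}_\lambda\bigr)(x)+\dot\gamma^{\ast}\bigl(1-\Phi^{E_i}_\lambda(x)\bigr),
\]
since $X_{\tau_{E_i}}\in\partial E_i\subset\partial K$ and the same identity applied to $\gamma_\lambda$ gives $\gamma_\lambda\le\gamma^{\ast}(1-\Phi^{E_i}_\lambda)$ on $E_i$. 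Second, after dividing by the uniform lower bound $c_0$ on the discounted harmonic measures $\mathbb{E}_x[e^{-\lambda\tau_A}1_{X_{\tau_A}\in\partial A_i}]$, what you obtain is $\dot\gamma^{\ast}\sum_i\inf_{\partial A_i}\Phi^{E_i}_\lambda\le C+(\sup_{\partial K}\gamma_\lambda)(C+C\sum_i\sup_{\partial A_i}\Psi^{E_i}_\lambda)$, i.e.\ with an extra multiplicative constant on the $\Psi$ term relative to the stated form; this is harmless, because the lemma is only ever invoked up to multiplicative constants (in the proof of Proposition \ref{prop min min}), but it is worth noting that your conclusion is formally slightly weaker. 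The remaining ingredients --- the uniform bound $c_0>0$ via Harnack for the harmonic measure of each $\partial A_i$ on the compact set $\partial K$ together with $\sup_{\overline A}\mathbb{E}_\cdot[\tau_A]<\infty$, and the trivial treatment of $\lambda\ge\lambda_0$ using $\dot\gamma_\lambda\le\lambda^{-2}$ and $\Phi\le 1$ --- are correctly identified and standard.
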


\begin{proof}[Proof of Proposition \ref{prop min min}]
Substituting (\ref{Phi}) into (\ref{GIS
Lemma4.1}), we obtain 
\begin{equation}
\sup_{\partial K}\gamma _{\lambda }\lesssim \frac{1}{\sum_{i=1}^{k}\inf_{%
\partial A_{i}}\Phi _{\lambda }^{E_{i}}}\lesssim \frac{1}{\sum_{i=1}^{k}%
\frac{1}{h_{i}(\frac{1}{\sqrt{\lambda }})}}\lesssim \min_{i\in I}h_{i}(\frac{%
1}{\sqrt{\lambda }}):=w\left( \lambda \right) .  \label{gamma h}
\end{equation}%
Applying  (\ref{GIS Lemma4.2}) with (\ref{gamma h}) and (\ref{Psi}), we
obtain, for small $\lambda >0$, 
\[
\sup_{\partial K}\dot{\gamma}_{\lambda }\lesssim w\left( \lambda \right)
\left( 1+w\left( \lambda \right) \left( 1+\sum_{i=1}^{k}\sup_{\partial
A_{i}}\Psi _{\lambda }^{E_{i}}\right) \right) \lesssim \frac{w^{2}\left(
\lambda \right) }{\lambda ^{2}\min_{i}\widetilde{V}_{i}(\frac{1}{\sqrt{%
\lambda }})},
\]%
where we have also used that $w\geq 1$ and, by (\ref{problem3}),%
\[
\lambda ^{2}\min_{i}\widetilde{V}_{i}(\frac{1}{\sqrt{\lambda }})\lesssim 1.
\]%
Hence, we have proved that  
\[
\sup_{\partial K}\dot{\gamma}_{\lambda }\lesssim \frac{\min_{i}h_{i}^{2}(%
\frac{1}{\sqrt{\lambda }})}{\lambda ^{2}\min_{i}\widetilde{V}_{i}(\frac{1}{%
\sqrt{\lambda }})}.
\]%
Taking here $\lambda =t^{-1}$ and substituting this estimate of $\dot{\gamma}%
_{\lambda }$ into (\ref{p gamma dot}), we conclude Proposition \ref{prop min min}.
\end{proof}

\begin{proof}[Proof of Theorem \ref{main} (estimate (\ref{ptoo2}))] 
First, if all ends are subcritical, then we have by definition
\[
h_i(r) \simeq \frac{r^2}{V_i(r)}, \quad \widetilde{V}_i(r) \simeq r^2.
\]
Then the estimate in (\ref{refine}) implies that 
\begin{equation}
p(t, o, o) \lesssim \frac{ \min_i \frac{t}{V_i(\sqrt{t})} }{ t} =\frac{1}{V_m(\sqrt{t})}
\simeq \frac{1}{V(o,\sqrt{t})},
\label{subcritical heat upper}
\end{equation}
where $V(o,r)$ is the measure of the geodesic ball in $M$ with radius $r$ 
centered at $o $.

If there exists at least one non-subcritical regular end, then 
by the assumption of (DOE), we have
\[
\min_{i}h_{i}^{2}(r)\simeq h_{l }^{2}(r),~~\min_{i}\widetilde{V}_{i}(r)\simeq 
\widetilde{V}_{l }(r).
\]%
Substituting this into the estimate in (\ref{refine}), it follows that for all $t\gg 1$ 
\begin{equation}
p(t,o,o)\lesssim \frac{h_{l }^{2}\left( \sqrt{t}\right) }{\widetilde{V}%
_{l }\left( \sqrt{t}\right) }=\frac{1}{V_{l }(\sqrt{t})}.  \label{max}
\end{equation}%
Because $V\left( o,r\right) \simeq V_m(r) \simeq V_{l }\left( r\right) $ 
(see Lemma \ref{Lem2}), we obtain, for all $%
t\gg 1$%
\[
p(t,o,o)\lesssim \frac{1}{V\left( o,\sqrt{t}\right) },
\]%
which gives the same upper estimate as in (\ref{subcritical heat upper}).

For a bounded range of $t$, this estimate is trivially satisfied, so that it
holds for all $t>0$. Since $V_{m}\left( r\right) $ is doubling, the volume
function $V\left( o,r\right) $ on $M$ is also doubling. By \cite%
{Coulhon-Grigoryan}, we conclude that $p\left( t,o,o\right) $ satisfies also
a matching lower bound%
\[
p(t,o,o)\gtrsim \frac{1}{V\left( o,\sqrt{t}\right) }\simeq \frac{1}{%
V_{m}\left( \sqrt{t}\right) }.
\]
Hence, we obtain the estimate in (\ref{ptoo2}), which concludes 
Theorem \ref{main}.
\end{proof}


\begin{remark}
If all ends $M_{i}$ are either critical (i.e., $V(r)\simeq r^{2}$) or
subcritical, then (\ref{ptoo}) was already obtained in our previous
paper \cite[Theorem 2.1]{GIS}. When all ends are subcritical, $%
m$ may depend on $r$ (see Theorem \ref{GIS}).

If {\rm{(DOE)}} is not satisfied then the upper estimate in (\ref{max}) might not
be optimal. We expect to obtain such an example in the case $M=M_{1}\#M_{2}$
with $V_{i}(r)=r^{2}\varphi _{i}(r)$, where each $\varphi _{i}$ is a slowly
varying function and $V_{1},V_{2}$ are not properly ordered in any of the
above sense. See \cite{GISfluc} for an example of volume functions $V_1, V_2$ behaving 
that way.
\end{remark}

\section{Estimate of the Poincar\'{e} constant}
\setcounter{equation}{0}
\label{SecPI}

\subsection{Remarks on the conditions {\rm{(COE)}}, {\rm{(DOE)}} and regularity}

\label{V Vh Vtil}In the next two lemmas we collect some already known
properties.

\begin{lemma}
\label{lemma V Vh Vtil}\label{Lem1} 
Let $V$ be regular with parameters 
$\gamma_1, \gamma_2>0$
satisfying $2\gamma_1+\gamma_2 <2$. 
Then for all $r\geq 1$ we have 
\begin{eqnarray}
r^{2-\gamma _{2}}&\lesssim & V(r)\lesssim r^{2+\gamma _{1}},  \label{V-1} \\
r^{2}&\lesssim & V(r)h(r)\lesssim r^{2+\gamma _{1}},  \label{V-2} \\
V(r)&\lesssim & \widetilde{V}(r)\lesssim r^{2+\gamma _{1}+\gamma _{2}}.
\label{V-3}
\end{eqnarray}
\end{lemma}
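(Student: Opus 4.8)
The plan is to derive the three two-sided estimates directly from the regularity hypothesis (\ref{regular}), the definition (\ref{hdef}) of $h$, the monotonicity of $V$, and the normalization $V(1)\simeq 1$ (which holds because $V(r)\simeq r^{n}$ for small $r$); only $\gamma_{1},\gamma_{2}>0$ is actually used here, not the full strength of $2\gamma_{1}+\gamma_{2}<2$. For (\ref{V-1}), I would apply (\ref{regular}) with $r=1$ and $R=r$, which gives $cV(1)r^{2-\gamma_{2}}\le V(r)\le CV(1)r^{2+\gamma_{1}}$ for all $r\ge1$, and then absorb the bounded constant $V(1)$.

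For (\ref{V-2}), the upper bound is precisely the computation (\ref{Vh}) carried out above: writing $V(r)h(r)=V(r)+\int_{1}^{r}\frac{V(r)}{V(s)}s\,ds$ and using $\frac{V(r)}{V(s)}\le C(r/s)^{2+\gamma_{1}}$ together with $\int_{1}^{r}s^{-1-\gamma_{1}}\,ds\le 1/\gamma_{1}$ and (\ref{V-1}) yields $V(r)h(r)\lesssim r^{2+\gamma_{1}}$. For the lower bound I would use that $V$ is non-decreasing: for $r\ge 2$,
\[
h(r)\ge\int_{r/2}^{r}\frac{s\,ds}{V(s)}\ge\frac{1}{V(r)}\int_{r/2}^{r}s\,ds=\frac{3r^{2}}{8V(r)},
\]
so that $V(r)h(r)\ge\tfrac{3}{8}r^{2}$, while for $1\le r\le 2$ the bound $V(r)h(r)\gtrsim r^{2}$ is immediate since $h(r)\ge1$ and $V(r)\simeq 1$. (This lower bound is also recorded as (\ref{hi>}).)

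For (\ref{V-3}), the lower bound $\widetilde V(r)=V(r)h^{2}(r)\ge V(r)$ follows at once from $h\ge1$. For the upper bound, first note that the lower bound in (\ref{V-1}) together with $\gamma_{2}>0$ gives $h(r)=1+\int_{1}^{r}\frac{s\,ds}{V(s)}\le 1+C\int_{1}^{r}s^{-1+\gamma_{2}}\,ds\lesssim r^{\gamma_{2}}$; combining this with the upper bound in (\ref{V-2}) yields $\widetilde V(r)=(V(r)h(r))\,h(r)\lesssim r^{2+\gamma_{1}}\cdot r^{\gamma_{2}}=r^{2+\gamma_{1}+\gamma_{2}}$. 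There is no real obstacle here: every integral that appears is of pure power type and is dominated by its endpoint behaviour thanks to $\gamma_{1},\gamma_{2}>0$, and the only minor points of care are tracking the constant $V(1)$ when passing between $V(r)$ and $V(r)/V(1)$ in (\ref{regular}) and handling the bounded range $r\in[1,2]$ separately in the lower bound of (\ref{V-2}).
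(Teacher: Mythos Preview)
Your proof is correct and follows essentially the same route as the paper: (\ref{V-1}) from (\ref{regular}) with $r=1$, the upper bound in (\ref{V-2}) from the computation (\ref{Vh}), the lower bound in (\ref{V-2}) from (\ref{hi>}), and (\ref{V-3}) from $h\ge1$ together with (\ref{Vtil1}). Your observation that only $\gamma_{1},\gamma_{2}>0$ (and not $2\gamma_{1}+\gamma_{2}<2$) is needed here is also correct; the only cosmetic difference is that you invoke monotonicity of $V$ rather than doubling for the lower bound in (\ref{V-2}), which is equally valid.
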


\begin{proof}
The estimates (\ref{V-1}) follow immediately from (\ref{regular}). The lower
bound in (\ref{V-2}) is equivalent to (\ref{hi>}).  The upper bound in (\ref%
{V-2}) is equivalent to (\ref{Vh}). 

The lower bound in (\ref{V-3}) is trivial because $h\geq 1.$ The upper bound
in (\ref{V-3}) coincides with (\ref{Vtil1}).
\end{proof}


\begin{lemma}
\label{Lem2}Let $M=\#_{i \in I} M_i$ be a manifold with nice ends. Assume that $M$ 
satisfies $(COE)$ with parameters $\varepsilon ,\delta
,\gamma _{1},\gamma _{2}$ (see Definition \emph{\ref{DefCOE}}). 
\begin{itemize}
\item In general, we have:
 \begin{itemize}
\item[(a)]  If $i\in I_{super}$, then $M_{i}$ is non-parabolic and $%
h_{i}(r)\simeq 1.$

\item[(b)] If $i\in I_{super}$, $j\in I_{middle}$ and $k\in I_{sub}$%
, then, for all $r\gg 1$,%
\begin{equation}
V_{i}(r)\gtrsim V_{j}(r)\gtrsim V_{k}(r)~\mbox{ and }V_{i}(r)\gtrsim
V_{j}(r)h_{j}(r)\gtrsim V_{k}(r)h_{k}(r)\simeq r^{2}  \label{(ii)-1}
\end{equation}%
and 
\begin{equation}
\widetilde{V}_{j}(r)\lesssim \widetilde{V}_{k}(r).  \label{(ii)-2}
\end{equation}%
\end{itemize}

\item If all ends are parabolic and $I_{middle}\neq \emptyset $ 
then the following properties hold:

\begin{itemize}
\item[(c)]
$I_{super}=\emptyset$ and 
$M$ admits $(DOE)$, introduced in Definition \ref{DOE}, that is, 
there exists a dominating volume function 
$V_{l}$ with $l \in I_{middle}$, that is a volume function such that for all $i \in I$
\begin{equation}
V_{l} \gtrsim V_i, \quad \widetilde{V}_{l} \lesssim \widetilde{V}_i.
\label{dominate}
\end{equation}

\item[(d)]
For all $r\gg 1$,
\begin{eqnarray*}
V_{l} (r) \simeq V_m(r) ~\mbox{and}& ~ \min_i h_i (r) \simeq h_{l}  (r) \simeq h_m(r),
\end{eqnarray*}
where $m=m(r)$ is the  index of the largest end
defined in (\ref{largest end}).

\item[(e)]
 For all $r\gg 1$,
\begin{eqnarray*}
\max_{j\neq l} \left\{ V_j(r) h_j(r)  \right\} \simeq &V_n(r)h_n(r),
\end{eqnarray*}
where $n=n(r)$ is the index of the second largest end
defined in (\ref{second largest end}).
\end{itemize}
\end{itemize}

\end{lemma}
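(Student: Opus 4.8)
The plan is to dispose of assertions (a) and (b) by direct estimates from Definition \ref{DefCOE} together with Lemma \ref{Lem1}, and then to bootstrap these into (c)--(e). For (a): if $i\in I_{super}$ then $V_i(r)\gtrsim r^{2+\varepsilon}$ makes $\int_1^{\infty} s\,V_i(s)^{-1}\,ds$ converge, so $M_i$ is non-parabolic and, by (\ref{hi}), $h_i\simeq 1$. For (b): Definition \ref{DefCOE} gives $V_j(r)\lesssim r^{2+\gamma_1}$ and $V_j(r)\gtrsim r^{2-\gamma_2}$ for $j\in I_{middle}$, $V_k(r)\lesssim r^{2-\delta}$ for $k\in I_{sub}$, and $V_i(r)\gtrsim r^{2+\varepsilon}$ for $i\in I_{super}$; since $\gamma_1<\varepsilon$ and $\gamma_2<\delta$, these immediately yield $V_i\gtrsim V_j\gtrsim V_k$ for $r\gg1$. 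The chain $V_i\gtrsim V_jh_j\gtrsim V_kh_k\simeq r^2$ then follows from Lemma \ref{Lem1} (namely $V_jh_j\lesssim r^{2+\gamma_1}$ and $V_jh_j\gtrsim r^2$) and the subcriticality of $M_k$. Finally $\widetilde V_j(r)\lesssim r^{2+\gamma_1+\gamma_2}$ by Lemma \ref{Lem1}, while $\widetilde V_k(r)=V_k h_k^2\simeq r^4/V_k(r)\gtrsim r^{2+\delta}$; since $\gamma_1+\gamma_2<\delta$ this gives (\ref{(ii)-2}).

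Next I would turn to (c). That $I_{super}=\emptyset$ is immediate from (a) since all ends are parabolic. For the dominating index: $I_{middle}$ is finite and totally ordered by $\gtrsim$, hence has a top element $l$, and $V_l\gtrsim V_i$ for every $i\in I_{middle}$ by maximality and for every $i\in I_{sub}$ by the argument of (b), so for all $i\in I$; moreover $\widetilde V_l\lesssim\widetilde V_i$ for $i\in I_{middle}$ by the parabolic clause of Definition \ref{DefCOE}(c) and for $i\in I_{sub}$ by (\ref{(ii)-2}), which is exactly (DOE). For (d): $V_l(r)\simeq V_m(r)$ because $V_l$ is one of the $V_i$ and dominates them all; likewise $V_l\gtrsim V_i$ gives $h_l\lesssim h_i$, so $\min_i h_i\simeq h_l$. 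The only delicate point is $h_m\simeq h_l$ for the scale-dependent index $m=m(r)$: $V_l\gtrsim V_{m(r)}$ gives $h_l(r)\lesssim h_{m(r)}(r)$ at once, while the implication $V_l\gtrsim V_{m(r)}\Rightarrow V_lh_l\gtrsim V_{m(r)}h_{m(r)}$ from Definition \ref{DefCOE}(c) (applicable since $m(r)\in I_{middle}$ for $r\gg1$, or trivially since $V_ih_i\simeq r^2$ on $I_{sub}$), evaluated at the single scale $r$ together with $V_{m(r)}(r)\simeq V_l(r)$, gives the reverse bound.

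The bulk of the work is (e). First one checks that $l$ is also the top of $I$ for the order induced by $V_ih_i$: on $I_{middle}$ this is Definition \ref{DefCOE}(c), and any subcritical end has $V_ih_i\simeq r^2\lesssim V_lh_l$. Hence $\max_{j\neq l}V_jh_j$ is the ``second largest'' of the $V_ih_i$; since on $I_{middle}$ the $V$-order and the $Vh$-order agree and subcritical ends contribute only $\simeq r^2$, this value is $\simeq V_{l_2}h_{l_2}$, where $l_2$ is the second element of $I_{middle}$ (when $|I_{middle}|\le1$ the claim degenerates to $\max_{j\neq l}V_jh_j\simeq r^2\simeq V_nh_n$). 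It then remains to match $V_{l_2}(r)h_{l_2}(r)$ with $V_{n(r)}(r)h_{n(r)}(r)$. Comparing the remaining middle ends ($V_i\lesssim V_{l_2}$) with the subcritical ends ($V_i\lesssim r^{2-\delta}\lesssim V_{l_2}$) one gets $V_{n(r)}(r)\simeq V_{l_2}(r)$ for $r\gg1$ (distinguishing the cases $m(r)=l$ and $m(r)\neq l$, the latter forcing $V_l(r)\simeq V_{l_2}(r)$), and this upgrades to $V_{n(r)}(r)h_{n(r)}(r)\simeq V_{l_2}(r)h_{l_2}(r)$ by the same ``global domination plus equality at scale $r$'' device as in (d), using that $n(r)\in I_{middle}$ for $r\gg1$ (a subcritical $n(r)$ would give $V_{n(r)}(r)\lesssim r^{2-\delta}$, contradicting $V_{n(r)}(r)\simeq V_{l_2}(r)\gtrsim r^{2-\gamma_2}$).

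The main obstacle is not a single hard estimate but the bookkeeping: $m(r)$ and $n(r)$ are defined by the size of $V_i$ at one scale, whereas $l$ and $l_2$ are fixed indices, so one must repeatedly upgrade ``$V_l\gtrsim V_i$ on $[1,\infty)$ and $V_i(r)\simeq V_l(r)$'' to ``$h_i(r)\simeq h_l(r)$'' and ``$V_i(r)h_i(r)\simeq V_l(r)h_l(r)$'', which is precisely where the one-directional implications of Definition \ref{DefCOE}(c) enter. Threading this together with the case split ($m(r)$ equal to or distinct from $l$, similarly for $n(r)$ versus $l_2$, and the degenerate case $|I_{middle}|\le1$) throughout (d) and (e) accounts for most of the length of the argument.
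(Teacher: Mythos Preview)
Your proposal is correct and follows essentially the same route as the paper's proof: parts (a) and (b) are handled identically via the power comparisons $\gamma_1<\varepsilon$, $\gamma_2<\delta$, $\gamma_1+\gamma_2<\delta$ together with Lemma~\ref{Lem1}; parts (c) and (d) use the total order on $I_{middle}$ and the ``global domination plus equality at scale $r$ forces $h$-equivalence'' device in the same way; and part (e) is the same case analysis ($|I_{middle}|=1$ versus $|I_{middle}|\ge 2$, then $m(r)=l$ versus $m(r)\neq l$). The only cosmetic difference is that you introduce a fixed second index $l_2\in I_{middle}$ and compare $n(r)$ to it, whereas the paper picks an ad hoc $j\in I_{middle}$ realizing $\max_{i\neq l}V_ih_i$ at each scale and compares it to $n(r)$ directly; both amount to the same use of the COE implications $V_i\gtrsim V_j\Rightarrow V_ih_i\gtrsim V_jh_j$.
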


\begin{proof}
$\left( a\right) $ For any $i\in I_{super}$ we have by definition $%
V_{i}\left( r\right) \gtrsim r^{2+\varepsilon }$ and, hence, 
\[
1\leq h_{i}(r)\lesssim 1+\int_{1}^{r}\frac{1}{s^{2+\varepsilon }}sds\lesssim
1.
\]
This means non-parabolicity of $M_i$ because of (\ref{LY}).

$\left( b\right) $ By (\ref{gade}) $\varepsilon >\gamma _{1}$ and $%
\gamma _{2}<\delta $, we obtain, by using Definition \ref{DefCOE} that%
\[
V_{i}\left( r\right) \gtrsim r^{2+\varepsilon }\gtrsim r^{2+\gamma
_{1}}\gtrsim V_{j}\left( r\right) \gtrsim r^{2-\gamma _{2}}\gtrsim
r^{2-\delta }\gtrsim V_{k}\left( r\right) .
\]
By (\ref{V-2}), subcriticality of $V_{k}$ and $\varepsilon >\gamma _{1}$ we
obtain%
\[
V_{k}(r)h_{k}(r)\simeq r^{2}\lesssim V_{j}(r)h_{j}(r)\lesssim r^{2+\gamma
_{1}}\lesssim r^{2+\varepsilon }\lesssim V_{i}\left( r\right) ,
\]%
which proved (\ref{(ii)-1}).

Since $V_{k}$ is subcritical, we have%
\[
\widetilde{V}_{k}(r)\simeq \frac{r^{4}}{V_{k}(r)}\geq cr^{2+\delta }.
\]%
Because $\gamma _{1}+\gamma _{2}<\delta $, we obtain by (\ref{V-3}) 
\[
\widetilde{V}_{j}(r)\lesssim r^{2+\gamma _{1}+\gamma _{2}}\lesssim
r^{2+\delta }\lesssim \widetilde{V}_{k}(r).
\]%

Now we assume that all ends are parabolic and $I_{middle}\neq
\emptyset $. 

$\left( c \right)$ 
 (a) implies $I_{super}=\emptyset$ immediately. 
By the definition of $I_{middle}$ and the estimates in (\ref{(ii)-1}), 
there exists an end $l \in I_{middle}$ such that
 for all $i \in I$ 
\begin{equation}
V_{l} \gtrsim V_i.
\label{*i}
\end{equation}
By the estimates in (\ref{(ii)-2}), we obtain
\[
\widetilde{V}_{l } \lesssim \widetilde{V}_i,
\]
which concludes (DOE).

(d) $V_{l} \gtrsim V_m$ and $V_m(r) \ge V_{l} (r)$ imply
$V_{l} (r) \simeq V_m(r)$ immediately.
By the estimates in (\ref{*i}),  we obtain
\[
\min_{i\in I} h_i \simeq h_{l}.
\]
Next, $V_{l} \gtrsim V_m$ implies that 
$h_{l} \lesssim h_m$ and, by the assumption of (COE) 
\[
V_{l} h_{l} \gtrsim V_i h_i.
\]
Since $V_{l} (r) \simeq V_m(r)$ at $r$, we obtain 
\[
h_{l} (r) \gtrsim h_m(r),
\]
which concludes $h_{l} (r) \simeq h_m(r)$.

(e) If $|I_{middle}|=1$, then for all $r\gg 1$, $l=m(r)$ and the second largest end is 
subcritical. Hence we can prove (e) easily (see also Lemma \ref{Lem2}).

If $|I_{middle}|\ge 2$, then for all $r\gg 1$, $m$ and $n$ are in $I_{middle}$. 
Assume first that $m=m(r)=l $. Let $j\in I_{middle}$ be the index 
so that $V_j(r)h_j(r) =\max_{i \in I \backslash \{ l \} } \left\{ V_i (r) h_i(r) \right\}$.
By the condition of (COE), Either $V_n\gtrsim V_j$ or $V_j \gtrsim V_n$ holds.
If $V_n\gtrsim V_j$, then $V_nh_n \gtrsim V_j h_j$, which implies (e).
If $V_j \gtrsim V_n$, we note that $V_n(r) \ge V_j(r)$ by $j\neq m=l $. 
Then same argument as in (a) concludes $V_j(r) \simeq V_n(r)$ and $h_j(r) \simeq h_n(r)$
which concludes (e).

Next, assume that $m \neq l $. In this case, we have
\begin{eqnarray*}
V_{l}  \gtrsim  V_m &~\mbox{and}~& V_{l} \gtrsim V_n,\\
V_m(r) \ge & \!\!\!\!\! V_n(r)&\!\!\!\!\! \ge V_{l} (r).
\end{eqnarray*}
Then we obtain $V_{l} (r) \simeq V_m(r) \simeq V_n(r)$ and, by the same argument as in (d),
we obtain
\[
h_{l} (r) \simeq h_m(r) \simeq h_n(r),
\]
whence we conclude (e).
\end{proof}

\subsection{Lower bound on Dirichlet heat kernel}

\label{SecLBDirichlet}Recall that $M$ is a manifold with nice ends $M_{1},\ldots
,M_{k}$. Moreover, assume (RCA) on
each parabolic end. First, we prove the following technical lemmas.

\begin{lemma}
\label{technical} Let $M$ be as above. Assume also that for all $t>0$ 
\begin{equation}
p(t, o, o) \simeq \frac{ \min_i h_i^2(\sqrt{t})}{\min_i \widetilde{V}_i(%
\sqrt{t})}.  \label{min min}
\end{equation}
Then we have for all $r\gg 1$ 
\begin{eqnarray}
\int_1^{r^2} p(s, o,o)ds \simeq &\min_i h_i(r).  \label{property1}
\end{eqnarray}
\end{lemma}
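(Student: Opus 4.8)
The plan is to substitute $s=u^{2}$ and then feed (\ref{min min}) into the integral. Since $\int_{1}^{r^{2}}p(s,o,o)\,ds=2\int_{1}^{r}p(u^{2},o,o)\,u\,du$ and $p(u^{2},o,o)\simeq \dfrac{\min_{i}h_{i}^{2}(u)}{\min_{i}\widetilde{V}_{i}(u)}$ by (\ref{min min}), the claim (\ref{property1}) is equivalent to
\[
\int_{1}^{r}\frac{\min_{i}h_{i}^{2}(u)}{\min_{i}\widetilde{V}_{i}(u)}\,u\,du\ \simeq\ \min_{i}h_{i}(r)\qquad(r\gg 1).
\]
I would first dispose of the case where $M$ is non-parabolic: then some end is non-parabolic, so $\min_{i}h_{i}\simeq 1$, while by the definition of parabolicity $\int_{1}^{\infty}p(s,o,o)\,ds<\infty$; hence for $r\geq\sqrt{2}$ we have $0<\int_{1}^{2}p(s,o,o)\,ds\le\int_{1}^{r^{2}}p(s,o,o)\,ds\le\int_{1}^{\infty}p(s,o,o)\,ds<\infty$, which is (\ref{property1}) since its right-hand side is $\simeq 1$.

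Now suppose $M$ is parabolic, so every $h_{i}(r)\to\infty$, hence $\varphi(r):=\min_{i}h_{i}(r)\to\infty$. For the lower bound, $\varphi$ is locally Lipschitz and at a.e.\ $u>1$ one has $\varphi'(u)=h_{l(u)}'(u)=u/V_{l(u)}(u)$ for an index $l(u)$ realizing the minimum (the standard fact about a minimum of finitely many $C^{1}$ functions); since $\min_{i}\widetilde{V}_{i}(u)\le\widetilde{V}_{l(u)}(u)=V_{l(u)}(u)\varphi(u)^{2}$,
\[
\frac{\min_{i}h_{i}^{2}(u)}{\min_{i}\widetilde{V}_{i}(u)}\,u\ \ge\ \frac{\varphi(u)^{2}u}{V_{l(u)}(u)\varphi(u)^{2}}=\varphi'(u),
\]
so the integral is $\ge\varphi(r)-\varphi(1)=\min_{i}h_{i}(r)-1\simeq\min_{i}h_{i}(r)$. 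For the upper bound I would use the structural hypotheses that guarantee (\ref{min min}) (cf.\ Theorems \ref{main} and \ref{GIS}). Either all ends are subcritical, and then $h_{i}\simeq u^{2}/V_{i}$, $\widetilde{V}_{i}\simeq u^{4}/V_{i}$ give $\dfrac{\min_{i}h_{i}^{2}(u)}{\min_{i}\widetilde{V}_{i}(u)}\simeq\dfrac{1}{V_{m}(u)}$ with $V_{m}(u)=\max_{i}V_{i}(u)$, so $\int_{1}^{r}\dfrac{u\,du}{V_{m}(u)}\le\int_{1}^{r}\dfrac{u\,du}{V_{i}(u)}=h_{i}(r)-1$ for every $i$ and minimizing over $i$ closes the bound. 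Or there is a non-subcritical regular end, and then by Lemma \ref{Lem2} (DOE) holds with a dominating index $l$: $V_{l}\gtrsim V_{i}$ forces $h_{l}\lesssim h_{i}$ (whence $\min_{i}h_{i}\simeq h_{l}$), $\widetilde{V}_{l}\lesssim\widetilde{V}_{i}$ gives $\min_{i}\widetilde{V}_{i}\simeq\widetilde{V}_{l}=V_{l}h_{l}^{2}$, so $\dfrac{\min_{i}h_{i}^{2}(u)}{\min_{i}\widetilde{V}_{i}(u)}\simeq\dfrac{1}{V_{l}(u)}$ and $\int_{1}^{r}\dfrac{u\,du}{V_{l}(u)}\le h_{l}(r)\simeq\min_{i}h_{i}(r)$. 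In both cases $\int_{1}^{r}\dfrac{\min_{i}h_{i}^{2}}{\min_{i}\widetilde{V}_{i}}\,u\,du\lesssim\min_{i}h_{i}(r)$, which together with the lower bound proves (\ref{property1}).

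The step I expect to be the main obstacle is the upper bound in the parabolic case. From (\ref{min min}) alone the pointwise inequality $\min_{i}h_{i}^{2}/\min_{i}\widetilde{V}_{i}\lesssim 1/V_{m}$ need not hold — one only gets it up to a logarithmic factor, because an index realizing $\min_{i}h_{i}$ need not realize $\min_{i}\widetilde{V}_{i}$ — so one genuinely has to invoke the dichotomy ``all ends subcritical'' versus ``(DOE)'', i.e.\ Lemma \ref{Lem2} together with the hypotheses of Theorem \ref{main}. The remaining points (the change of variables, the non-parabolic case, and the a.e.\ differentiability of $u\mapsto\min_{i}h_{i}(u)$ with $\varphi'(u)=u/V_{l(u)}(u)$) are routine.
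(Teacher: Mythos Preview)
Your non-parabolic case and your lower bound in the parabolic case are correct; the derivative argument via $\varphi=\min_i h_i$ with $\varphi'(u)=u/V_{l(u)}(u)$ a.e.\ is a clean alternative to what the paper does.

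However, your assertion that for the upper bound ``one genuinely has to invoke the dichotomy'' (all subcritical versus (DOE)) is mistaken, and this is the main point of comparison with the paper. The paper proves both bounds directly from (\ref{min min}) and parabolicity alone, with no recourse to the structural hypotheses of Theorem~\ref{main}. The trick is the identity
\[
\frac{1}{h_i(r)}=\int_r^\infty \frac{s\,ds}{\widetilde V_i(s)},
\]
valid for each parabolic $M_i$ (differentiate: $(1/h_i)'=-h_i'/h_i^2=-s/\widetilde V_i$, and $1/h_i(r)\to 0$). Taking $\min_i$ on the left, i.e.\ $\max_i$ of the integrals on the right, and using $\max_i\simeq\sum_i$ over finitely many $i$, one gets
\[
\min_i h_i(r)\ \simeq\ h(r):=\left(\int_r^\infty \frac{s\,ds}{W(s)}\right)^{-1},\qquad W:=\min_i\widetilde V_i.
\]
Now $h'(u)=h(u)^2\,u/W(u)$, so
\[
\int_1^{r}\frac{\min_i h_i^2(u)}{\min_i\widetilde V_i(u)}\,u\,du\ \simeq\ \int_1^{r}\frac{h^2(u)}{W(u)}\,u\,du=\int_1^{r}h'(u)\,du=h(r)-h(1)\ \simeq\ \min_i h_i(r),
\]
giving both inequalities at once. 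Thus the paper proves Lemma~\ref{technical} exactly as stated, while your argument only proves it under the extra hypotheses of Theorem~\ref{main}; that suffices for the applications here, but your upper bound does not establish the lemma in the generality claimed.
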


\begin{proof}
If $M$ is non-parabolic, although the estimates in (\ref{min min}) holds
(see Theorem \ref{non-parabolic}), we can prove the estimate (\ref{property1}%
) without using (\ref{min min}). Indeed, by the definition of the
non-parabolicity we have for $r \gg 1$ 
\[
\int_1^{r^2} p(s, o,o)ds \simeq 1.
\]
Because $h_i \simeq 1$ for each non-parabolic end, we have 
\[
\min_i h_i \simeq 1.
\]
Then we conclude (\ref{property1}).

Next, suppose that $M$ is parabolic. By the assumption (\ref{min min}), 
\[
\int_1^{r^2} p(s, o,o)ds \simeq \int_1^{r} \frac{ \min_i h_i^2(s)}{\min_i 
\widetilde{V}_i (s)} sds.
\]
Observe that 
\[
h_i(r) = \frac{1}{\int_r^\infty \frac{sds}{\widetilde{V}_i(s)} },
\]
which is obtained by integration of the identity 
\[
\left( \frac{1}{h(r)} \right)^\prime =-\frac{h^\prime(r)}{h^2(r)} =-\frac{%
r/V(r)}{h^2(r)}=-\frac{r}{\widetilde{V}_i(r)}
\]
and 
\[
\frac{1}{h(r)} \rightarrow 0 ~\mbox{as}~ r\rightarrow \infty
\]
by the parabolicity. Then we have 
\begin{eqnarray*}
\min_i h_i(r)& =&\frac{1}{\max_i \int_r^\infty \frac{sds}{\widetilde{V}_i(s)} }
\simeq \frac{1}{\sum_i \int_r^\infty \frac{sds}{\widetilde{V}_i(s)} } \\
&\simeq & \frac{1}{ \int_r^\infty \frac{sds}{\min_i \widetilde{V}_i(s)} } = 
\frac{1}{\int_r^\infty \frac{sds}{W(s)} }=:h(r),
\end{eqnarray*}
where 
\begin{eqnarray*}
W(r)=\min_i \widetilde{V}_i (r).
\end{eqnarray*}

Because
\[
-\frac{h^\prime(r)}{h^2(r)}= \left( \frac{1}{h(r)}\right)^\prime=-\frac{r}{%
W(r)}
\]
and $h$ is a doubling function, we conclude that for all $r \ge 2$ 
\[
\int_1^{r} \frac{ \min_i h_i^2 (s)}{\min_i \widetilde{V}_i (s)} sds \simeq
\int_1^r \frac{h^2(s)}{W(s)}sds =\int_1^r h^\prime (s)ds =h(r)-h(1) \simeq
h(r).
\]
\end{proof}

For $i=1, \ldots , k$, set 
\begin{equation}
A_{i}(r)=\left( B(o,r)\backslash B(o,r/2)\right) \cap E_{i}.
\label{annulus}
\end{equation}
For $\kappa >0$, $\kappa B$ means $B(o,\kappa r)$ . Recall that $p^D_{\kappa
B}(t,x,y)$ is the Dirichlet heat kernel on $\kappa B$.

The following estimates of $p^D_{\kappa B}(t,x,y)$ has a key role for the
estimate of the Poincar\'{e} constant:

\begin{lemma}
\label{Dirichlet} Assume that a connected sum $M=M_{1}\#\ldots \#M_{k}$
satisfies all the hypothesis of Lemma \ref{technical}. Then there exists $%
\kappa \geq 1$ such that for all $r\gg 1$, for any $x\in A_{i}(r)$, $y\in
B(o,r)\cap E_{j}$, 
\begin{small}
\begin{equation}
p_{\kappa B}^{D}(r^{2},x,y)\simeq \frac{1}{V_{i}(r)h_{j}(r)}\Bigg[\delta
_{ij}h_{j}(|y|)+\frac{\min_{\eta } h_{\eta} (r)}{h_{i}(r)}\left( \int_{|y|}^{r}\frac{sds%
}{V_{j}(s)}+\frac{r^{2}h_{j}(|y|)}{V_{j}(r)h_{j}(r)}\right) \Bigg].
\label{estimate Dirichlet}
\end{equation}%
\end{small}
Moreover, this implies that
\begin{equation}
\inf_{x\in A_{i}(r),y\in B}p_{\kappa B}^{D}(r^{2},x,y)\gtrsim  \frac{r^{2} \min_{\eta} h_{\eta} (r)}
{V_i(r)h_i(r) \max_{j \neq i} \left\{ V_j(r) h_j(r)\right\}  }.
\label{lower Dirichlet}
\end{equation}
%
%
%
%
%
%
%
%
%
%
%
%
%
%
%
%
%
%
%
%
\end{lemma}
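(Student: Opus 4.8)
The plan is to obtain (\ref{estimate Dirichlet}) by specialising the global off-diagonal bound of Theorem~\ref{off-diagonal} to the regime $t=r^{2}$, $x\in A_{i}(r)$ (so $|x|\asymp r$), $y\in B(o,r)$ (so $|y|\le r$), and by controlling the difference between the Dirichlet kernel $p^{D}_{\kappa B}$ on $\kappa B=B(o,\kappa r)$ and the global kernel $p$ once $\kappa$ is a large enough fixed constant; then (\ref{lower Dirichlet}) will follow by minimising the right-hand side of (\ref{estimate Dirichlet}) over $y\in B$. The auxiliary inputs are Lemma~\ref{technical}, applicable here because (\ref{min min}) is assumed, for $\int_{1}^{r^{2}}p(s,o,o)\,ds$; the bound (\ref{hi>}); the doubling of $V_{i}$ and $h_{i}$; and the elementary fact that $V_{i}(x,r)\asymp V_{i}(r)$ whenever $d_{i}(x,o_{i})\lesssim r$, a consequence of volume doubling on $M_{i}$.

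First I would simplify the formula of Theorem~\ref{off-diagonal} at $t=r^{2}$. Because $|x|\asymp r$ one has $D_{i}(|x|,r^{2})\asymp 1$ and $H_{i}(|x|,r^{2})\asymp r^{2}/\bigl(V_{i}(r)h_{i}(r)\bigr)$; because $|y|\le r$ one has $D_{j}(|y|,r^{2})\asymp h_{j}(|y|)/h_{j}(r)$; and because $d(x,y)\le|x|+|y|\lesssim r$ and $|x|^{2}+|y|^{2}\lesssim r^{2}$, all the Gaussian factors $e^{-b(\cdot)/r^{2}}$ are comparable to $1$. By Lemma~\ref{technical}, $\int_{1}^{r^{2}}p(s,o,o)\,ds\asymp\min_{\eta}h_{\eta}(r)$, and since $p(\cdot,o,o)$ is non-increasing this is also $\gtrsim r^{2}p(r^{2},o,o)$, which together with $D_{i}(|x|,r^{2})\asymp1$ and the value of $H_{i}(|x|,r^{2})$ shows that the middle term of Theorem~\ref{off-diagonal} (the one carrying $p(r^{2},o,o)$) is dominated by its last term. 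What remains is $\delta_{ij}h_{j}(|y|)/\bigl(V_{i}(r)h_{j}(r)\bigr)$ plus $\dfrac{\min_{\eta}h_{\eta}(r)}{V_{i}(r)h_{i}(r)}\Bigl(H_{j}(|y|,r^{2})+\dfrac{r^{2}h_{j}(|y|)}{V_{j}(r)h_{j}^{2}(r)}\Bigr)$, and to match (\ref{estimate Dirichlet}) it remains to verify
\[
h_{j}(r)\,H_{j}(|y|,r^{2})=\frac{h_{j}(r)\,|y|^{2}}{V_{j}(|y|)\,h_{j}(|y|)}+\int_{|y|}^{r}\frac{s\,ds}{V_{j}(s)}\asymp\int_{|y|}^{r}\frac{s\,ds}{V_{j}(s)}+\frac{r^{2}h_{j}(|y|)}{V_{j}(r)h_{j}(r)},
\]
where $\gtrsim$ is trivial and $\lesssim$ follows from (\ref{hi>}) (used as $|y|^{2}/V_{j}(|y|)\lesssim h_{j}(|y|)$) together with the doubling of $V_{j}$ and $h_{j}$, distinguishing the cases $2|y|\le r$ and $2|y|>r$. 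This gives (\ref{estimate Dirichlet}) with $p^{D}_{\kappa B}(r^{2},x,y)$ replaced by $p(r^{2},x,y)$.

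Next I would pass from $p$ to $p^{D}_{\kappa B}$. The upper bound is immediate from $p^{D}_{\kappa B}\le p$. For the lower bound, by the strong Markov property at the first exit time $\tau$ of Brownian motion from $\kappa B$,
\[
p(r^{2},x,y)=p^{D}_{\kappa B}(r^{2},x,y)+\E_{x}\bigl[\mathbf{1}_{\{\tau<r^{2}\}}\,p(r^{2}-\tau,X_{\tau},y)\bigr],
\]
and I would show that for $\kappa$ a sufficiently large absolute constant the error term is at most $\tfrac12 p(r^{2},x,y)$ for all $r\gg1$: a path exiting $\kappa B$ before time $r^{2}$ returns to $y$ from distance $\ge(\kappa-1)r$ within time $\le r^{2}$, so by the off-diagonal bounds (Theorem~\ref{off-diagonal}, and \cite{G-SC Dirichlet} through the gluing structure) its contribution carries a Gaussian loss $e^{-b\kappa^{2}}$, whereas $p(r^{2},x,y)$ is, again by Theorem~\ref{off-diagonal}, of the order realised by paths of spatial extent $\lesssim r$, which are unaffected by the truncation at radius $\kappa r$. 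Hence $p^{D}_{\kappa B}(r^{2},x,y)\ge\tfrac12 p(r^{2},x,y)$, and combining with the previous step yields (\ref{estimate Dirichlet}).

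Finally, (\ref{lower Dirichlet}) follows by minimising the right-hand side of (\ref{estimate Dirichlet}) over $x\in A_{i}(r)$ and $y\in B$: only $y$ matters, and since $\delta_{ij}h_{j}(|y|)\ge0$ the infimum is attained at $y$ in some end $E_{j}$ with $j\neq i$ (such $j$ exists since $k\ge2$), where the quantity equals $\dfrac{\min_{\eta}h_{\eta}(r)}{V_{i}(r)h_{i}(r)h_{j}(r)}\Bigl(\int_{|y|}^{r}\dfrac{s\,ds}{V_{j}(s)}+\dfrac{r^{2}h_{j}(|y|)}{V_{j}(r)h_{j}(r)}\Bigr)$. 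A short case analysis --- if $h_{j}(|y|)\le\tfrac12h_{j}(r)$ the first summand is $\gtrsim h_{j}(r)\gtrsim r^{2}/V_{j}(r)$ by (\ref{hi>}), otherwise the second summand is $\gtrsim r^{2}/V_{j}(r)$ --- shows that the bracket is $\gtrsim r^{2}/V_{j}(r)$ for all $|y|\le r$, and it is $\asymp r^{2}/V_{j}(r)$ as $|y|\to r$; hence the infimum over $y\in E_{j}\cap B$ is $\asymp r^{2}\min_{\eta}h_{\eta}(r)\big/\bigl(V_{i}(r)h_{i}(r)V_{j}(r)h_{j}(r)\bigr)$, and minimising over $j\neq i$ gives (\ref{lower Dirichlet}); points $y\in K$ are handled by the local Harnack inequality near $o$ and only increase the quantity. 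The main obstacle is the third step: making precise, for a \emph{fixed} $\kappa$, the control of the excursion term on the non-doubling manifold $M$; the bookkeeping of the first two steps (absorbing the $p(r^{2},o,o)$-term, checking the displayed equivalence) is routine but must be done carefully.
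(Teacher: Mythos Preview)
Your strategy---specialise Theorem~\ref{off-diagonal} at $t=r^{2}$, control $p-p^{D}_{\kappa B}$ via the strong Markov property, then minimise over $y$---is exactly the paper's, and your handling of the first and last parts is correct (the paper argues Step~5 via the monotonicity of $t\mapsto\int_t^{r}s\,ds/V_j(s)+Cr^{2}h_j(t)/(V_j(r)h_j(r))$, which is equivalent to your case split on $h_j(|y|)$).

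The gap you flag in the middle step is real and is precisely where the paper invests its effort. The paper (starting the Brownian motion from $y$ rather than $x$) decomposes $\{\tau_{\kappa B}\le r^{2}\}$ according to the end $E_{\xi}$ containing the exit point and bounds each summand by
$\mathbb{P}_{y}\bigl(\tau_{((\kappa B)^{c}\cap E_{\xi})^{c}}\le r^{2}\bigr)\cdot\sup_{s\le r^{2},\,z\in(\partial\kappa B)\cap E_{\xi}}p(s,z,x)$.
For the hitting probability it invokes \cite[Thm.~3.5]{G-SC hitting}, namely $\mathbb{P}_{y}(\tau_{F^{c}}\le r^{2})\le 2\,\mathrm{cap}(F,\Omega)\int_{1}^{r^{2}}\sup_{\omega\in\Omega\setminus F}p(s,\omega,y)\,ds$ with $F\subset\Omega\subset E_{\xi}$ annuli at scale $\kappa r$, so that $\mathrm{cap}(F,\Omega)\lesssim V_{\xi}(r)/r^{2}$, and then estimates the integrand via Theorem~\ref{off-diagonal} and Lemma~\ref{technical}. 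For the supremum of $p(s,z,x)$ it applies Theorem~\ref{off-diagonal} and uses the doubling of $V_{i}$ and $\widetilde V_{i}$ to replace each $1/V(\sqrt s)$-type factor by its value at $r$ times $(r^{2}/s)^{\alpha}$, then absorbs $(r^{2}/s)^{\alpha}e^{-b(\kappa r)^{2}/s}\le C\kappa^{-2\alpha}$. The crucial structural point (the paper's Step~4) is that after summing over $\xi$ the product of these two estimates reproduces \emph{term by term} the expression (\ref{step1-2}), multiplied by $Ce^{-b\kappa^{2}}/\kappa^{2\alpha}$. It is this termwise matching---not merely a uniform Gaussian loss on one factor---that permits a single choice of $\kappa$ uniform in $r$, $i$, $j$ and $|y|$; your sketch does not yet contain the end-by-end decomposition or the capacity/hitting ingredient needed to produce it.
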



\begin{proof}
We prove the estimate in (\ref{estimate Dirichlet}) only from below (upper
bound follows from the trivial bound $p_{\kappa B}^{D}(r^{2},x,y)\leq
p(r^{2},x,y)$ and the estimate in (\ref{step1-2})).

Recall that $\tau_{\kappa B}$ is the first exist time from $\kappa B$. By
the definition of the Dirichlet heat kernel and the strong Markov property
of the Brownian motion on $M$ (see also \cite{G-SC ends}), we obtain 
\begin{eqnarray*}
p_{\kappa B}^{D}(r^{2},x,y)=& p(r^{2},x,y)-\mathbb{E}_{y}(\mathbf{1}_{\{\tau
_{\kappa B}\leq r^{2}\}}p(r^{2}-\tau _{\kappa B}, X_{\tau _{\kappa B}} ,x)) ,
\end{eqnarray*}%
where $\kappa \geq 1$ will be chosen later. By the structure of the
connected sum, we observe that 
\[
\{ \tau_{\kappa B} \leq r^2 \} = \cup_{\xi =1}^k \{ \tau_{\kappa B} \leq r^2, ~
X_{\tau_{\kappa B}} \in E_{\xi } \} \subset \cup_{\xi =1}^k \left\{ \tau_{
\big( (\kappa B)^c\cap E_{\xi } \big)^c} \leq r^2 \right\}
\]
(see Figure \ref{Figure: hitting}). 
\begin{figure}[tbph]
\begin{center}
\scalebox{1.1}{
\includegraphics{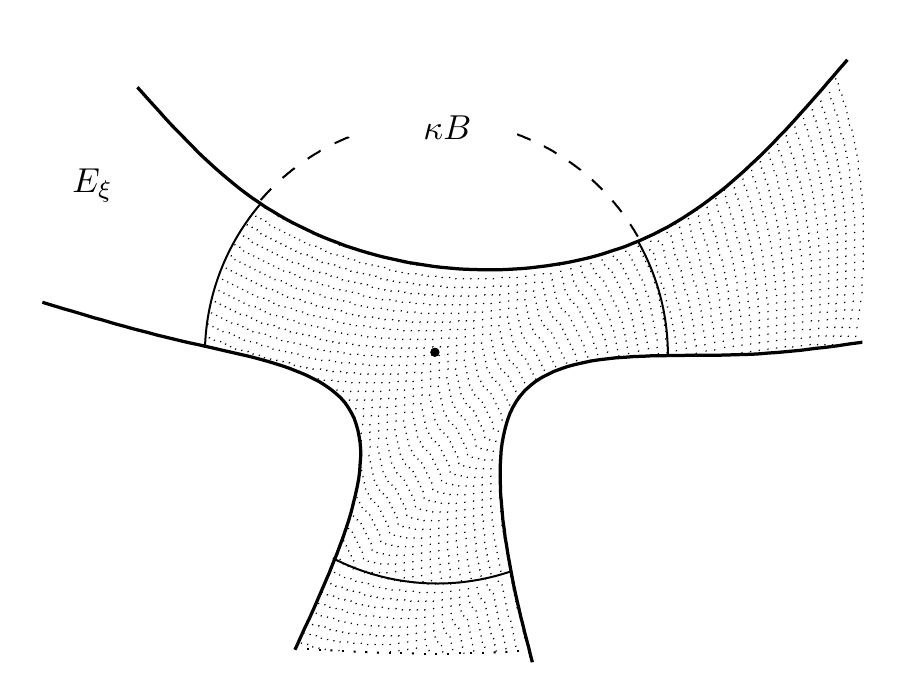} 
}
\end{center}
\par
\label{Figure: hitting}
\caption{$\Big( (\protect\kappa B)^c \cap E_{\xi } \Big)^c$ (shadow area).}
\end{figure}

We note that 
\[
\partial \Big( (\kappa B)^c \cap E_{\xi } \Big)^c =(\partial \kappa B) \cap E_{\xi }.
\]

Then we obtain 
\begin{small}
\begin{eqnarray}
p_{\kappa B}^{D}(r^{2},x,y)&=& p(r^2, x,y)-\sum_{\xi =1}^k \mathbb{E}_{y}\left( 
\mathbf{1}_{\{ \tau_{\kappa B} \leq r^2 , ~X_{\tau_{\kappa B} \in E_{\xi }} \}}
p(r^2- \tau_{\kappa B} , X_{\tau_{\kappa B}} , x) \right) \nonumber\\
&\geq & p(r^{2},x,y)-\sum_{\xi =1}^{k}\mathbb{P}_{y}\left( \tau _{\big((\kappa B)^{c}\cap
E_{\xi }\big)^{c}}\leq r^{2} \right) 
\sup_{\stackrel{0<s\leq r^{2}}{z \in \left(\partial \kappa B\right) \cap E_{\xi }}}p(s, z ,x). \nonumber\\
\label{Dirichlet decomp}
\end{eqnarray}%
\end{small}
We estimate $p(r^2 ,x,y)$, $\mathbb{P}_{y} 
\Big( \tau _{\big( (\kappa B )^{c}\cap E_{\xi } \big)^{c}}\leq r^2 \Big)$ 
and $\sup_{\stackrel{0<s\leq r^2}{z \in \left( \partial \kappa
B\right) \cap E_{\xi }}}p(s, z, x)$ separately.

\textbf{Step1: Estimate of }$p(r^2 ,x,y)$

We will prove that for any $x \in A_i(r)$, $y \in B(o,r)\cap E_j$
\begin{small}
\begin{equation}
p(r^{2},x,y)\simeq \frac{1}{V_{i}(r)h_{j}(r)}\Bigg[\delta _{ij}h_{j}(|y|)+%
\frac{\min_{\eta} h_{\eta} (r)}{h_{i}(r)}\left( \int_{|y|}^{r}\frac{sds}{V_{j}(s)}+%
\frac{r^{2}h_{j}(|y|)}{V_{j}(r)h_{j}(r)}\right) \Bigg],  \label{step1-2}
\end{equation}%
\end{small}
which is nothing but the same estimate as in (\ref{estimate Dirichlet}) for the
full heat kernel $p(r^{2},x,y)$ instead of 
$p^D_{\kappa B}(r^2, x, y)$. Because $p^{D}_{\kappa B}(r^2, x, y) \leq 
p(r^2, x, y)$ is always true, this estimate gives the upper bound in
 (\ref{estimate Dirichlet}).

To prove (\ref{step1-2}), we use Theorem \ref{off-diagonal} and the location of 
$x,y$ to obtain 
\begin{small}
\begin{eqnarray}
p(r^{2},x,y)&\simeq & \delta _{ij}\frac{1}{V_{i}(r)}\frac{h_{j}(|y|)}{h_{j}(r)%
}+p(r^{2},o,o)\frac{r^{2}}{V_{i}(r)h_{i}(r)}\frac{1}{h_{j}(r)}\int_{|y|}^{r}%
\frac{sds}{V_{j}(s)}  \nonumber \\
&&\!\!\!\!\!\!\!\!\!\!\!\!\!\!\!\!\!\!\!\!\!\!\!\! +\int_{1}^{r^{2}}p(s,o,o)ds\left( \frac{1}{V_{i}(r)h_{i}(r)}\frac{1}{%
h_{j}(r)}\int_{|y|}^{r}\frac{sds}{V_{j}(s)}+\frac{r^{2}}{V_{i}(r)h_{i}(r)}%
\frac{h_{j}(|y|)}{V_{j}(r)h_{j}^{2}(r)}\right) .  \nonumber \\
\label{step1}
\end{eqnarray}%
\end{small}
Because $p(t,o,o)$ is monotone decreasing, it is easy to see that 
\begin{equation}
r^{2}p(r^{2},o,o)\leq C\int_{1}^{r^{2}}p(s,o,o)ds,  \label{trivial}
\end{equation}%
which makes the third term of the right hand side in (\ref{step1}) dominate
the second term. By using Lemma \ref{technical}, we obtain (\ref{step1-2}).

\textbf{Step2: Estimate of $\mathbb{P}_{y} 
\Big( \tau _{\big( (\kappa B )^{c}\cap E_{\xi } \big)^{c}}\leq r^2 \Big)$}

We will prove that for any $y \in B(o, r) \cap E_j$
\begin{equation}
\mathbb{P}_{y}\left( \tau _{\big( (\kappa B)^{c}\cap E_{\xi } \big)^c}\leq r^{2} \right) 
\asymp C \Big[%
\delta _{\xi j}\frac{h_{j}(|y|)}{h_{j}(r)}+\frac{\min_{\eta} h_{\eta} (r)}{h_{\xi}(r)}\frac{%
1}{h_{j}(r)}\int_{|y|}^{r}\frac{sds}{V_{j}(s)}\Big]e^{-b\kappa ^{2}}.
\label{step2}
\end{equation}

Since $y\in B(o,r)\cap E_{j}$, applying Theorem 3.5 in \cite{G-SC hitting},
we obtain 
\[
\mathbb{P}_{y}(\tau _{((\kappa B)^{c}\cap E_{\xi })^c}\leq r^{2})
\leq \mathbb{P}_y (\tau_{(F^c)} \leq r^2) \leq 
2\mathrm{%
cap}(F, \Omega)\int_{1}^{r^{2}}\sup_{\omega \in \Omega\backslash
F}p(s, \omega ,y)ds,
\]%
where we chose $\kappa>2$ and 
\begin{small}
\begin{eqnarray*}
F=\left\{ \omega \in E_{\xi } ~:~ \frac{3\kappa r}{4} \leq |\omega | \leq \frac{5\kappa r}{4}
\right\}, ~ \Omega =\left\{ \omega \in E_{\xi } ~:~ \frac{\kappa r}{2} < |\omega | < \frac{%
3\kappa r }{2} \right\}.
\end{eqnarray*}
\end{small}
Here we remark that $(\partial \kappa B) \cap E_{\xi } \subset F  \subset \Omega$ and 
$ y \not \in \Omega $ because of $\kappa >2$. 

By the estimate in Theorem \ref{off-diagonal}, 
\begin{small}
\begin{eqnarray*}
&&\!\!\!\!\!\!\!\!\!\!\!\!\! \sup_{\omega \in \Omega \backslash F} p(s, \omega  ,y)\asymp 
 C\Big[ \frac{\delta
_{\xi j}}{V_{\xi }(\sqrt{s})} \frac{h_{j}(|y|)}{h_{j}(|y|)+h_{j}(\sqrt{s})} 
+p(s,o,o)\frac{r^{2}}{V_{\xi }(r)h_{\xi }(r)}\mathbb{P}_{y}(\tau _{K}\leq s) \\
&&\!\!\!\!\!\!\!\!\!\!\!\!\!  +\!\! \int_{1}^{s} \!\!\! p(u,o,o)du \Big( \frac{1}{V_{\xi}(\sqrt{s})h_{\xi}(\sqrt{s})}%
\mathbb{P}_{y}(\tau _{K}\leq s)
+\frac{r^{2}}{V_{\xi}(r)h_{\xi}(r)}\partial _{s}%
\mathbb{P}_{y}(\tau _{K}\leq s)\Big) \Big]e^{-b(\kappa r)^{2}/s}.
\end{eqnarray*}%
\end{small}
Then we obtain 
\begin{small}
\begin{eqnarray*}
\int_{1}^{r^{2}} \sup_{\omega \in \Omega \backslash F} p(s, \omega ,y)ds &&\\
&&\!\!\!\!\!\!\!\!\!\!\!\!\!\!\!\!\!\!\!\!\!\!\!\!\!\!\!\!\!\!\!\!\!\!\! \!\!   \leq 
C \Big[\delta _{\xi j}\frac{r^{2}}{V_{\xi }(r)}\frac{h_{j}(|y|)}{h_{j}(r)}%
+\int_{1}^{r^{2}}p(s,o,o)ds\frac{r^{2}}{V_{\xi }(r)h_{\xi }(r)}\mathbb{P}_{y}(\tau
_{K}\leq r^{2})\Big]e^{-b\kappa ^{2}}.
\end{eqnarray*}
\end{small}
Since 
\begin{small}
\[
\mathrm{cap}(F, \Omega)=
\mathrm{cap}\Big( \overline{B_{\xi } \Big( o_{\xi }, \frac{\kappa r}{2} \Big) }, 
B_{\xi } \Big( o_{\xi }, \frac{3\kappa r}{4} \Big) \Big)
+
\mathrm{cap}\Big( \overline{B_{\xi } \Big( o_{\xi }, \frac{5\kappa r}{4} \Big) }, 
B_{\xi } \Big( o_{\xi }, \frac{3\kappa r}{2} \Big) \Big),
\]
\end{small}
where $B_{\xi } (x,r)$ is a geodesic open ball in $M_{\xi }$. 
By using the general estimate for any $0<r<R$ 
\[
\mathrm{cap}(\overline{B(o,r)}, B(o,R)) \leq \frac{2}{ \int_r^R \frac{sds}{\mu(B(o,s))}}
\]

(see \cite[Theorem 7.1]{G 1999}, \cite[(4.5)]{G-SC Dirichlet} for the detail), 
we obtain
\[
\mathrm{cap}(F, \Omega) \leq C\frac{V_{\xi }(r)}{r^{2}}.
\]%
By Theorem 4.6 in \cite{G-SC hitting}, 
\[
\mathbb{P}_{y}(\tau _{K}\leq r^{2}) \leq \frac{C}{h_{j}(r)}\int_{|y|}^{r}%
\frac{sds}{V_{j}(s)}.
\]%
Combining these estimate and Lemma \ref{technical}, we conclude 
(\ref{step2}).


\textbf{Step3: Estimate of }$\sup_{0<s\leq r^2, z \in( \partial \kappa
B)\cap E_{\xi }}p(s, z ,x)$

We will prove that for any $x \in A_i(r)$
\begin{equation}
\sup_{\stackrel{0<s\leq r^2}{ z \in (\partial \kappa B)\cap E_{\xi }}} \!\!p(s,z ,x)\leq \frac{C%
}{\kappa^{2\alpha}}\Big[\delta _{i\xi }\frac{1}{V_{i}(r)}+\min_{\eta} h_{\eta}  (r) \frac{r^{2}}{%
V_{i}(r)h_{i}(r)}\frac{1}{V_{\xi }(r)h_{\xi }(r)}\Big].  \label{step3}
\end{equation}

By the estimates in Theorem \ref{off-diagonal}, 
for all $0<s\leq r^2$ and $z  \in (\partial \kappa B)\cap E_{\xi }$
\begin{small}
\begin{eqnarray*}
&&p(s,x, z )\asymp  Ce^{-b(\kappa r)^{2}/s}\Big[\delta _{i\xi }\frac{1}{V_{i}(\sqrt{s})}+p(s,o,o)\frac{%
r^{2}}{V_{i}(r)h_{i}(r)}\frac{r^{2}}{V_{\xi }(r)h_{\xi }(r)} \\
&& +\int_{1}^{s}p(u,o,o)du\left( \frac{1}{V_{i}(\sqrt{s})h_{i}(\sqrt{s})}%
\frac{r^{2}}{V_{\xi }(r)h_{\xi }(r)}+\frac{r^{2}}{V_{i}(r)h_{i}(r)}\frac{1}{V_{\xi }(%
\sqrt{s})h_{\xi }(\sqrt{s})}\right) \Big].
\end{eqnarray*}%
\end{small}
Since $V_i$ and $\widetilde{V}_i$ are doubling, 
there exist $C, \alpha>0$ such that 
\begin{eqnarray*}
\frac{1}{V_i(\sqrt{s})} &\leq &\frac{C}{V_i(r)} \left( \frac{r^2}{s}
\right)^{\alpha}, \\
p(s, o,o) &\simeq &p(r^2, o ,o) \frac{ \frac{\min_i h_i^2 (\sqrt{s})}{\min_i 
\widetilde{V}_i (\sqrt{s})}} { \frac{\min_i h_i^2 (r)}{\min_i \widetilde{V}%
_i (r)}} \\
&\leq &p(r^2, o, o) \frac{ \min_i \widetilde{V}_i(r)}{\min_i 
\widetilde{V}_i(\sqrt{s})} \leq Cp(r^2, o, o) \left( \frac{r^2}{s}
\right)^{\alpha} , \\
\frac{1}{V_i(\sqrt{s})h_i(\sqrt{s})}& = & \frac{h_i(\sqrt{s})}{\widetilde{V}%
_i(\sqrt{s})} \leq \frac{C}{V_i(r) h_i(r)} \left( \frac{r^2}{s}
\right)^{\alpha}.
\end{eqnarray*}
Because the inequality
\[
t^{\alpha} e^{-t} \leq \alpha^{\alpha} e^{-\alpha}
\]
is always true for all $t>0$, we have
\[
\left( \frac{r^2}{s} \right)^{\alpha} e^{-b\kappa^2 \frac{r^2}{s}} \leq 
\frac{1}{b^{\alpha} \kappa^{2\alpha}} \alpha^{\alpha} e^{-\alpha}.
\]
Then we obtain 
\begin{small}
\begin{eqnarray*}
\sup_{\stackrel{0<s\leq r^2}{ z \in (\partial \kappa B) \cap E_{\xi }}} \!\! p(s, z ,x) \!\! &\leq&  \!\!
\frac{C}{\kappa^{2\alpha}}\Big[\delta _{i\xi }\frac{1}{V_{i}(r)}+p(r^2,o,o)\frac{r^{2}}{%
V_{i}(r)h_{i}(r)}\frac{r^{2}}{V_{\xi }(r)h_{\xi }(r)} \\
&&\!\!\!\!\!\!\!\!\!\!\!\!\! \!\!\!\!\!\!\!\!\!\!\!\!\!\!\!\!\!\!\!\!\!\!\!\!\!\!\!\!\!\! +\int_{1}^{r^2}p(u,o,o)du\left( \frac{1}{V_{i}(r)h_{i}(r)}\frac{r^{2}}{%
V_{\xi }(r)h_{\xi }(r)}+\frac{r^{2}}{V_{i}(r)h_{i}(r)}\frac{1}{V_{\xi } (r)h_{\xi }(r)}%
\right) \Big].
\end{eqnarray*}%
\end{small}
Using (\ref{trivial}) and Lemma \ref{technical} again, we conclude (\ref{step3}).


\textbf{Step4: Proof of (\ref{estimate Dirichlet})}

Combining (\ref{step2}) and (\ref{step3}), we obtain 
\begin{small}
\begin{eqnarray*}
\sum_{\xi =1}^{k}&&\!\!\!\!\!\!\!\!\!\!\!\!\!\!\! \mathbb{P}_{y}(\tau _{(\kappa B)^{c}\cap E_{\xi }}\leq
r^{2}) \sup_{\stackrel{0<s\leq r^{2}}{z \in (\partial \kappa B) \cap E_{\xi }}}p(s, z ,x) \\
&\leq &\frac{Ce^{-b\kappa^2}}{\kappa^{2\alpha}} \sum_{\xi =1}^{k}\left( \delta _{\xi j}\frac{h_{j}(|y|)}{%
h_{j}(r)}+\frac{\min_{\eta} h_{\eta} (r)}{h_{\xi }(r)}\frac{1}{h_{j}(r)}\int_{|y|}^{r}%
\frac{sds}{V_{j}(s)}\right) \\
&&\quad\qquad\qquad\times \left( \delta _{i\xi }\frac{1}{V_{i}(r)}+\min_{\eta} 
h_{\eta} (r) \frac{r^{2}}{V_{i}(r)h_{i}(r)}\frac{1}{V_{\xi }(r)h_{\xi }(r)}\right) \\
&=&  \frac{Ce^{-b\kappa^2}}{\kappa^{2\alpha}}\frac{1}{V_{i}(r)h_{j}(r)}\sum_{\xi =1}^{k}\left( \delta
_{\xi j}h_{j}(|y|)+\frac{\min_{\eta} h_{\eta} (r) }{h_{\xi }(r)}\int_{|y|}^{r}\frac{sds}{%
V_{j}(s)}\right) \\
&&\qquad\qquad\qquad\qquad \qquad \times \left( \delta _{i\xi }+\frac{\min_{\eta} h_{\eta} (r) }{h_{i}(r)}\frac{%
r^{2}}{V_{\xi }(r)h_{\xi }(r)}\right) \\
&=&  \frac{Ce^{-b\kappa^2}}{\kappa^{2\alpha}}\frac{1}{V_{i}(r)h_{j}(r)}\Big[\delta _{ij}h_{j}(|y|) \\
&&\quad+%
\frac{\min_{\eta} h_{\eta} (r) }{h_{i}(r)}\Big( \int_{|y|}^{r}\frac{sds}{V_{j}(s)}%
\Big( 1+\frac{r^{2}\min_{\eta} h_{\eta} (r) }{\min_{\eta}\widetilde{V}_{\eta}(r)}\Big) +%
\frac{r^{2}}{V_{j}(r)h_{j}(r)}h_{j}(|y|)\Big) \Big].
\end{eqnarray*}
\end{small}

By using Lemma \ref{technical} and a trivial lower bound $V_{\eta} (r)h_{\eta} (r)
\gtrsim r^2$, we obtain 
\[
r^2 \min_{\eta} h_{\eta} (r) \lesssim \min_{\eta} \widetilde{V}_{\eta}(r)
\]
which implies that 
\[
1+\frac{r^{2}\min_{\eta} h_{\eta} (r) }{\min_{\eta}\widetilde{V}_{\eta}(r)} \simeq 1.
\]
Then we have 
\begin{small}
\begin{eqnarray}
 \sum_{\xi =1}^{k}\mathbb{P}_{y}(\tau _{(\kappa B)^{c}\cap E_{\xi }}\leq
r^{2})&&\!\!\!\!\!\!\!\!\!\!\!\!\! \sup_{\stackrel{0<s\leq r^{2}}{z \in (\partial \kappa B)\cap E_{\xi }}}p(s, z ,x) 
\nonumber \\
&& \!\!\!\!\!\!\!\!\!\!\!\!\!\!\!\!\!\!\!\!\!\!\!\!\!\!\!\!\!\!\!\!\!\!\!\!\!\!\!\!\!\!\!\!\!\!\!\!\!\!\!\!\!\!\!\!\!\!\!\!\!\!\!\!\!
\!\!\!\!\!\!
\leq \frac{Ce^{-b\kappa^2}}{\kappa^{2\alpha}} \frac{1}{V_{i}(r)h_{j}(r)}\Big[\delta
_{ij}h_{j}(|y|) 
+\frac{\min_{\eta} h_{\eta} (r)}{h_{i}(r)}\Big( \int_{|y|}^{r}\frac{sds%
}{V_{j}(s)}+\frac{r^{2}}{V_{j}(r)h_{j}(r)}h_{j}(|y|)\Big) \Big], \nonumber \\
\label{step 4}
\end{eqnarray}%
\end{small}
which is same as  (\ref{step1-2}) up to $Ce^{-b\kappa^2}/\kappa^{2\alpha}$.
Choosing $\kappa >2$ large enough and substituting (\ref{step1-2}) and (\ref{step 4}) into 
(\ref{Dirichlet decomp}), we conclude (\ref{estimate Dirichlet}).


\textbf{Step5: Proof of (\ref{lower Dirichlet})}

If $i=j$, then we obtain by the
estimate in (\ref{estimate Dirichlet}) and the fact that  $h_j(|y|)+\int_{|y|}^r \frac{sds}{V_j(s)} =h_j(r)$
\begin{equation}
p_{\kappa B}^{D}(r^{2},x,y)\geq \frac{c}{V_i(r) h_i(r)} \frac{\min_{\eta} h_{\eta} (r)}{h_i(r)} h_i(r)
=c\frac{\min_{\eta} h_{\eta} (r)}{V_i(r)h_i(r)}.\label{parab 1}
\end{equation}
If $i\neq j$, then the function 
\[
F(t)=\int_t^{r} \frac{sds}{V_j(s)} +\frac{Cr^2}{V_j(r)h_j(r)} h_j(t)
\]
is monotone decreasing for some constant $C>0$ and for all $r \gg 1$, $t \simeq r$ 
gives the lower bound of $F$, namely we obtain
\[
F(r) \geq c \frac{r^2}{V_j(r)}.
\]
Then we obtain by the estimate in (\ref{estimate Dirichlet})
\begin{eqnarray}
p_{\kappa B}^{D}(r^{2},x,y) &\geq& \frac{c}{V_i(r)h_j(r)} \frac{\min_{\eta} h_{\eta} (r)}{h_i(r)} \frac{r^2}{V_j(r)}
\nonumber\\
 & = &c\frac{ \min_{\eta} h_{\eta} (r)}{ V_i(r)h_i(r)} \frac{r^2}{V_j(r) h_j(r)}.
\label{parab 2}
\end{eqnarray}

%
Because $V_j(r) h_j(r) \geq c r^2$ is always true, combining estimates in (\ref{parab 1}) and (\ref{parab 2}), we conclude the
estimate in (\ref{lower Dirichlet}).
\end{proof}


\subsection{Upper bound on Poincar\'e constant}

\label{SecUBPoincare}We prove here the upper bounds of the Poincar\'{e}
constant: (\ref{PC upper non-parab}) of Theorem \ref{main non-parab} and (%
\ref{PC upper parab}) of Theorem \ref{main parab} with some large $\kappa \ge 1$. 
The following lemma inspired by Kusuoka-Stroock \cite[Theorem 5.10]{KS} (see also \cite[5.5.1]{SC LNS}) 
plays a key role to obtain an upper bound of the Poincar\'e constant from a lower bound 
of the Dirichlet heat kernel.
\begin{lemma}\label{Dirichlet Poincare}
Let $(M,\mu)$ be a weighted manifold. 
Let $U \subset U^\prime \subset M$ be precompact connected open sets. 
Then for any $t>0$ and any open set $A\subset U^\prime$, 
\[
\Lambda(U, U^\prime) \leq \frac{2t}{\mu(A) \inf_{x \in A, y \in U} 
p_{U^\prime}^D(t, x, y)}.
\]
\end{lemma}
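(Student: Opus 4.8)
The plan is to exploit the spectral representation of the Dirichlet heat semigroup on $U'$ together with the variational characterization \eqref{Lam sup} of $\Lambda(U,U')$. First I would recall that, by \eqref{Lam sup}, it suffices to show that for every $f\in C^1(\overline{U'})$ one has
\[
\inf_{\xi\in\R}\int_U|f-\xi|^2\,d\mu \;\le\; \frac{2t}{\mu(A)\,\inf_{x\in A,\,y\in U}p^D_{U'}(t,x,y)}\int_{U'}|\nabla f|^2\,d\mu .
\]
The quantity on the left is bounded by $\int_U|f-c|^2\,d\mu$ for any choice of constant $c$; the natural choice is the average of $f$ over $A$, namely $c=f_A:=\frac{1}{\mu(A)}\int_A f\,d\mu$. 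So the target reduces to
\[
\int_U|f-f_A|^2\,d\mu \;\le\; \frac{2t}{\mu(A)\,\delta}\int_{U'}|\nabla f|^2\,d\mu,
\qquad \delta:=\inf_{x\in A,\,y\in U}p^D_{U'}(t,x,y).
\]

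The key step is to write $f-f_A$ pointwise on $U$ in terms of the heat flow. Let $P^D_s$ denote the Dirichlet heat semigroup of $U'$, with kernel $p^D_{U'}$. For $y\in U$ write
\[
f(y)-\frac{1}{\mu(A)}\int_A f\,d\mu
= \underbrace{\Big(f(y)-P^D_t f(y)\Big)}_{=\,-\int_0^t \partial_s P^D_s f(y)\,ds\,=\,\int_0^t P^D_s(-\Delta f)(y)\,ds}
\;+\;\Big(P^D_t f(y)-\tfrac{1}{\mu(A)}\textstyle\int_A f\,d\mu\Big),
\]
and the plan is to discard or control the second bracket. Indeed $P^D_t f(y)=\int_{U'}p^D_{U'}(t,y,z)f(z)\,dz$, while $\frac{1}{\mu(A)}\int_A f\,d\mu=\frac{1}{\mu(A)}\int_A f(z)\,dz$; subtract a common reference. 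The clean way is to use $1-P^D_t1\ge 0$ (subunit mass) and the lower bound $p^D_{U'}(t,y,z)\ge\delta$ for $z\in A$, $y\in U$: one gets, for $y\in U$,
\[
\Big|\,f(y)-\tfrac{1}{\mu(A)}\textstyle\int_A f\,d\mu\,\Big|^2 \le 2\Big|\int_0^t P^D_s(-\Delta f)(y)\,ds\Big|^2 + 2\Big|P^D_t f(y)-\tfrac{1}{\mu(A)}\textstyle\int_A f\,d\mu\Big|^2,
\]
and for the last term one writes $P^D_tf(y)-\frac1{\mu(A)}\int_A f = \int_A\big(p^D_{U'}(t,y,z)-\frac1{\mu(A)}\big)f(z)\,dz+\int_{U'\setminus A}p^D_{U'}(t,y,z)f(z)\,dz$; using $\int_{U'}p^D_{U'}(t,y,z)\,dz\le 1$ and $\mu(A)\,\delta\le 1$ (which forces the comparison constant $\frac1{\mu(A)}\ge\delta$), this term is dominated after integrating the energy estimate below. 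The cleanest presentation, which I would adopt, is instead: integrate the identity $f(y)-P^D_tf(y)=\int_0^t P^D_s(-\Delta f)(y)\,ds$ against $\frac1{\mu(A)}\mathbf 1_A(z)$ in the $y$-variable and bound $\int_U|f-f_A|^2$ by energy via the standard estimate $\|P^D_s g\|_2^2$ decreasing and $\langle -\Delta P^D_s f, f\rangle = \|\nabla P^D_{s}f\|_2^2$, yielding $\int_U |f-P^D_tf|^2\,d\mu \le 2t\int_0^t\|\nabla P^D_sf\|_2^2\,ds/\ \text{(something)}$; combined with $\|P^D_tf - f_A\|_{L^2(U)}$ controlled via $\inf p^D_{U'}\ge\delta$, $\mu(A)\delta\le 1$.

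The main obstacle — and the step requiring care — is bounding the "projected mean'' term $P^D_tf(y)-\frac1{\mu(A)}\int_Af\,d\mu$ uniformly over $y\in U$ and showing it contributes only the $\frac{2t}{\mu(A)\delta}$-type bound; this is where the hypothesis $p^D_{U'}(t,x,y)\ge\delta$ for $x\in A$, $y\in U$ and the factor $\mu(A)$ enter in an essential way (they give $\frac{1}{\mu(A)}\ge\delta$ and $\frac1{\mu(A)}\mathbf 1_A(z)\le \frac1{\mu(A)\delta}p^D_{U'}(t,z,y)$). Once that pointwise comparison $\frac{1}{\mu(A)}\mathbf 1_A(z)\le \frac{1}{\mu(A)\,\delta}\,p^D_{U'}(t,y,z)$ is in hand, one substitutes it into the representation $f(y)-f_A = \int_0^t P^D_s(-\Delta f)(y)\,ds + \big(P^D_tf(y)-f_A\big)$ and uses that $P^D_t$ is an $L^2$-contraction plus the energy identity $-\frac{d}{ds}\tfrac12\|P^D_sf\|_{L^2(U')}^2 = \|\nabla P^D_sf\|_{L^2(U')}^2 \le \|\nabla f\|_{L^2(U')}^2$; integrating in $y$ over $U$ against $d\mu$, applying Cauchy–Schwarz in the $\int_0^t ds$ integral and the just-derived kernel lower bound, everything collapses to $\frac{2t}{\mu(A)\,\delta}\int_{U'}|\nabla f|^2\,d\mu$. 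Finally, taking the infimum over $\xi$ on the left (which only helps) and invoking \eqref{Lam sup} gives the claimed bound on $\Lambda(U,U')$.
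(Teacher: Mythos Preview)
Your decomposition $f(y)-f_A = (f(y)-P^D_tf(y)) + (P^D_tf(y)-f_A)$ with the \emph{Dirichlet} semigroup cannot be made to work as you outline, because $P^D_t$ does not preserve constants. Take $f\equiv 1$: the left side vanishes and $\int_{U'}|\nabla f|^2=0$, yet both summands on the right are nonzero (since $P^D_t 1<1$ strictly on $U'$), so neither can be bounded by the energy. Your pointwise comparison $\tfrac{1}{\mu(A)}\mathbf 1_A(z)\le \tfrac{1}{\mu(A)\delta}\,p^D_{U'}(t,y,z)$ only gives $|f_A|\le \tfrac{1}{\mu(A)\delta}P^D_t|f|(y)$, which says nothing about $|P^D_tf(y)-f_A|$ in terms of $\|\nabla f\|_{L^2}$. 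The formula $f-P^D_tf=\int_0^t P^D_s(-\Delta f)\,ds$ is also problematic, since a generic $f\in C^1(\overline{U'})$ is not in the domain of the Dirichlet Laplacian (it need not vanish on $\partial U'$).

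The paper's argument avoids all of this by working with the \emph{Neumann} semigroup $P^{N,U'}_t$, which does conserve mass. Mass conservation yields the exact identity
\[
(f-P^{N,U'}_{2t}f,\,f)_{L^2(U')}=\int_{U'}P^{N,U'}_t\bigl[(f-P^{N,U'}_tf(x))^2\bigr](x)\,d\mu(x),
\]
and the spectral inequality $2t\int_{U'}|\nabla f|^2\ge (f-P^{N,U'}_{2t}f,f)$. One then restricts the outer integral to $A$, uses $p^N_{U'}\ge p^D_{U'}$ on the inner integral (restricted to $U$), and finally invokes $\int_U|f-c|^2\ge\int_U|f-f_U|^2$ for any constant $c$. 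The Dirichlet kernel enters only at the last step via this domination; the mass-preserving Neumann semigroup is what makes the constant term harmless.
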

\begin{proof} Let $\{P_{t}^{N, U^\prime} \}$ 
be the Neumann-heat semigroup on $U^\prime $ and 
$p_{U^\prime }^{N}(t,x,y)$ its kernel function (Neumann heat kernel). 
By a well-known fact for Dirichlet forms, for any $f\in
C^{1}(\overline{U^\prime} )$, and for any $t>0$, 
\[
\int_{U^\prime }|\nabla f|^{2}d\mu =\mathcal{E}(f,f)_{U^\prime }\geq \left( 
\frac{f-P_{2t}^{N, U^\prime }f}{2t},f\right) _{L^{2}(U^\prime )}.
\]%
Then we obtain for any open set $A \subset U^\prime$
\begin{eqnarray}
2t\int_{U^\prime }|\nabla f|^{2}d\mu & \geq &
\left( f-P_{2t}^{N, U^\prime }f,f\right) _{L^{2}(U^\prime )}  \nonumber\\
& =& \int_{U^\prime }P_{t}^{N, U^\prime }\left[ f-P_{t}^{N, U^\prime }f(x)%
\right] ^{2}(x)d\mu (x) \nonumber\\
& \geq &\int_{A}P_{t}^{N, U^\prime }\left[ f-P_{t}^{N, U^\prime }f(x)\right] ^{2}(x)d\mu (x) 
\nonumber\\
& \geq &\mu (A)\inf_{x\in A}P_{t}^{N, U^\prime }\left[
f-P_{t}^{N, U^\prime }f(x)\right] ^{2}(x). 
\label{KS1}
\end{eqnarray}%
Because the Neumann heat kernel is always larger than the Dirichlet heat
kernel, we obtain for all $x\in A$ 
\begin{small}
\begin{eqnarray}
P_{t}^{N, U^\prime }\left[ f-P_{t}^{N, U^\prime }f(x)\right] ^{2}(x)& 
=&  \int_{U^\prime }p_{U^\prime }^{N}(t,x,y)\left[ f(y)-P_{t}^{N, U^\prime }f(x)\right] ^{2}d\mu (y)  \nonumber \\
& \geq &  \int_{U }p_{U^\prime }^{D}(t,x,y)\left[ f(y)-P_{t}^{N, U^\prime }f(x)\right] ^{2}d\mu (y)  \nonumber \\
& \geq &\inf_{y\in U}p_{U^\prime }^{D}(t,x,y)\int_{U}|f(y)-P_{t}^{N, U^\prime }f(x)|^{2}d\mu (y) 
\nonumber \\
& \geq& \inf_{y\in U}p_{U^\prime }^{D}(t,x,y)\int_{U}|f(y)-f_{U}|^{2}d\mu (y).  \label{integrand}
\end{eqnarray}%
\end{small}
Here the final line follows from the fact that $f_{U}$ minimizes the
function $\xi \mapsto \int_{U}|f-\xi |^{2}d\mu $. 
Combining the estimates in (\ref{KS1}) and (\ref{integrand}), 
we complete the proof.
\end{proof}

\begin{proof}[Proof of Theorems \ref{main non-parab} and \ref{main parab}]
Now we start to prove the upper bound of the Poincar\'e constant in the 
main theorems. For the lower bound, see section \ref{SecLBPoincare}.

Recall that $M$ is the manifold with nice ends $M_1, \ldots , M_k$,
where each parabolic end satisfies (RCA).
If there is at least one parabolic end, we assume also that $M$ admits (COE) 
defined in Definition \ref{DefCOE}. 
By the results in Theorems \ref{non-parabolic} and \ref{main heat},
 $M$ satisfies the estimate in (\ref{min min}) which allows us to use 
Lemma \ref{Dirichlet}. 
Let $\kappa>1  $ be the constant from (\ref{estimate Dirichlet}%
). Apply Lemma \ref{Dirichlet Poincare} to the case of $U=B=B(o,r)$, 
$U^\prime=\kappa B=B(o, \kappa r)$, $t=r^2$ and $A=A_i(r)$ given in (\ref{annulus}).
Then we obtain
\[
\Lambda(B, \kappa B) 
\leq \frac{2r^2}{\mu(A_i(r)) \inf_{x \in A_i(r), y \in B} 
p_{\kappa B}^D(r^2, x, y)}.
\]
Since $\mu(A_i(r)) \simeq V_i(r)$ by 
(VD) on $M_i$, substituting the estimates in (\ref{lower Dirichlet})
to the above, we obtain for all $i=1,\ldots, k$
\begin{equation}
\Lambda(B, \kappa B)\lesssim \frac{h_i(r) }{
 \min_{\eta} h_{\eta} (r)}\max_{j \neq i} \left\{ V_j(r) h_j(r) \right\}.
\label{PC general}
\end{equation}

If all ends are non-parabolic, then $h_i(r)\simeq 1$ for all $i=1,  \ldots ,k$.
By taking $i=m=m(r)$, the index of the largest end at $r$, we obtain
\[
\Lambda(B, \kappa B) \lesssim \max_{j \neq m} V_j(r) \simeq
V_n(r).
\]
Hence, the estimate in (\ref{PC upper non-parab}) holds for some $\kappa \ge
1$. 

Next, let us assume that $M=\#_{i\in I}M_{i}$ is a manifold with (COE).
Assume first that $I_{super}\neq \emptyset$. Then the index of the 
largest end $m$ is in $I_{super}$ and $\min_{\eta} h_{\eta} (r) \simeq 1$. Taking $i=m$,
we obtain
\[
\Lambda (B, \kappa B) \lesssim \max_{ j \neq m} \left\{ V_j(r)h_j(r) \right\}.
\]
By the second condition in (\ref{(ii)-1}) and the definition of (COE), we
conclude 
\[
 \max_{j\neq m} \left\{ V_{j}(r)h_{j}(r) \right\} \simeq
V_{n}(r)h_{n}(r),
\]
which gives the estimate in (\ref{PC upper parab}) for some $\kappa \geq 1$.

If $I_{super}=\emptyset$ and $I_{middle} \neq \emptyset$ 
then, by Lemma \ref{Lem2} there exists 
a dominating volume function $V_{l}$ satisfying (\ref{dominate}) with $l \in I_{middle}$ and
 $\min_i h_i (r) \simeq h_{l} (r)$.
Estimate (\ref{PC general}) with $i=l$ implies that
\[
\Lambda (B, \kappa B) \lesssim \max_{j \neq l} \left\{ V_j(r) h_j(r) \right\}
\simeq V_n(r)h_n(r),
\]
which gives the estimate in (\ref{PC upper parab}) for some $\kappa \geq 1$.

Finally, assume that $I_{super}=I_{middle}=\emptyset$, that is, all ends are subcritical. 
By the definition of subcriticality given in Definition \ref{DefCOE} (b), 
\[
\min_{\eta} h_{\eta} (r) \lesssim \min_{\eta} \frac{r^2}{V_{\eta} (r)} =\frac{r^2}{V_m(r)}.
\]
Taking $i=m$ in the estimate in (\ref{PC general}),  we obtain
\[
\Lambda(B, \kappa B )\lesssim \max_{j \neq m} \left\{ V_j (r) h_j(r) \right\} 
\simeq  r^2, 
\]
which completes the proof of the estimate in(\ref{PC upper parab}) with $%
\kappa \geq 1$. 

By using  an additional argument in Section \ref{section Whitney}, 
we will show  that we can reduce $\kappa >1$ to $\kappa=1$.
\end{proof}

\subsection{Lower bound on the Poincar\'e constant}

\label{SecLBPoincare}
In this section we prove the matching lower bound for the Poincar\'{e} constant
under a different hypothesis.

\begin{theorem}
\label{Pmain(c)} Let $M=M_1\# \cdots \# M_k$ be a manifolds with ends such
that each end $M_i$ satisfies {\rm{(VD)}} and 
\begin{equation}
CV_{i}\left( r\right) \geq rV_{i}^{\prime }\left( r\right) \ \ \mbox{for all 
} r\ge r_0.  \label{VrV'}
\end{equation}%
Then for all large $r\gg 1$ 
\begin{equation}
\Lambda (B(o,r)) \geq c \max_{i\neq m}\left\{ V_{i}(r)h_{i}(r) \right\},
\label{general lower}
\end{equation}
where $m=m(r)$ is the index of the largest end at scale $r$, namely for all $%
i \in I$ 
\begin{equation}
V_m(r) \ge V_i(r).  \label{V max}
\end{equation}
\end{theorem}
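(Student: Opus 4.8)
The plan is to prove (\ref{general lower}) by exhibiting a single explicit Lipschitz test function, supported on the second-heaviest end, and estimating its Rayleigh quotient in $B(o,r)$ from below. Fix $r$ large and pick an index $\ell=\ell(r)\in I\setminus\{m\}$ with $V_{\ell}(r)h_{\ell}(r)=\max_{i\neq m}\{V_{i}(r)h_{i}(r)\}$; since $I$ is finite, all constants below may be chosen uniformly in this choice. Fix once and for all a constant $\rho_{0}$, depending only on the gluing data of $M$, so large that every point of a collar of $\partial E_{\ell}$ lying in $E_{\ell}$ has $|x|:=d(x,K)+3\le\rho_{0}$; recall that $E_{\ell}$ is isometric to $M_{\ell}\setminus K_{\ell}$, so that, by this structure and {\rm{(VD)}} on $M_{\ell}$, one has $\mu(E_{\ell}\cap B(o,t))\simeq V_{\ell}(t)$ for all large $t$. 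Define
\[
f(x)=\begin{cases}
h_{\ell}(|x|)-h_{\ell}(\rho_{0}), & x\in E_{\ell},\ \rho_{0}\le|x|\le r,\\
0, & \text{otherwise in }B(o,r).
\end{cases}
\]
As $h_{\ell}$ is nondecreasing, $f\ge0$, and since $f$ vanishes near $\partial E_{\ell}$ it is Lipschitz on $\overline{B(o,r)}$; although (\ref{Lam sup}) is written with $C^{1}$ competitors, its supremum is unchanged if Lipschitz competitors are allowed (by a routine mollification), so $f$ is admissible.

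First I would estimate the Dirichlet energy. Since $\nabla f$ is supported on $E_{\ell}\cap\big(B(o,r)\setminus B(o,\rho_{0})\big)$, where $|\nabla f|\le h_{\ell}'(|x|)\,|\nabla|x||\le h_{\ell}'(|x|)=|x|/V_{\ell}(|x|)$, the co-area formula gives
\[
\int_{B(o,r)}|\nabla f|^{2}\,d\mu\ \le\ \int_{\rho_{0}}^{r}\left(\frac{t}{V_{\ell}(t)}\right)^{2}\frac{d}{dt}\,\mu\big(E_{\ell}\cap B(o,t)\big)\,dt .
\]
I would then bound $\frac{d}{dt}\mu(E_{\ell}\cap B(o,t))\lesssim V_{\ell}(t)/t$ --- this is where the hypothesis (\ref{VrV'}) is used (via the connected-sum structure); alternatively one splits the integral over dyadic annuli $[2^{k},2^{k+1}]$ and uses only {\rm{(VD)}} together with $\mu(E_{\ell}\cap B(o,t))\simeq V_{\ell}(t)$ --- and conclude
\[
\int_{B(o,r)}|\nabla f|^{2}\,d\mu\ \lesssim\ \int_{\rho_{0}}^{r}\frac{t\,dt}{V_{\ell}(t)}\ \le\ h_{\ell}(r).
\]

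Next I would bound $\inf_{\xi}\int_{B(o,r)}|f-\xi|^{2}\,d\mu$ from below by a multiple of $V_{\ell}(r)h_{\ell}(r)^{2}$. By the reverse volume doubling property (\ref{reverse}), a consequence of {\rm{(VD)}}, one can fix an integer $K_{0}$ depending only on $M$ with $\mu(E_{\ell}\cap B(o,2^{-K_{0}}r))\le\tfrac12\mu(E_{\ell}\cap B(o,r))$ for all large $r$; then the outer shell $S:=\{x\in E_{\ell}:\ 2^{-K_{0}}r\le|x|\le r\}$ satisfies $\mu(S)\gtrsim V_{\ell}(r)$, and on $S$
\[
f(x)=h_{\ell}(|x|)-h_{\ell}(\rho_{0})\ \ge\ h_{\ell}(2^{-K_{0}}r)-h_{\ell}(\rho_{0})\ \gtrsim\ h_{\ell}(r)
\]
for $r$ large, using the doubling of $h_{\ell}$ (which follows from (\ref{hi>})) when $M_{\ell}$ is parabolic, and the elementary fact $h_{\ell}(r)-h_{\ell}(\rho_{0})\simeq h_{\ell}(r)\simeq1$ when $M_{\ell}$ is non-parabolic. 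Since $f\equiv0$ on $E_{m}\cap B(o,r)$, a set disjoint from $S$ with $\mu(E_{m}\cap B(o,r))\gtrsim V_{m}(r)\ge V_{\ell}(r)$ (by (\ref{V max}) and $\ell\neq m$), a two-case split on whether $|\xi|$ is larger or smaller than a fixed multiple of $h_{\ell}(r)$ --- comparing $\xi^{2}\mu(E_{m}\cap B(o,r))$ with $\int_{S}|f-\xi|^{2}\,d\mu$ --- shows $\int_{B(o,r)}|f-\xi|^{2}\,d\mu\gtrsim V_{\ell}(r)h_{\ell}(r)^{2}$ for every $\xi\in\mathbb{R}$.

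Combining the two estimates,
\[
\Lambda\big(B(o,r)\big)\ \ge\ \frac{\inf_{\xi\in\mathbb{R}}\int_{B(o,r)}|f-\xi|^{2}\,d\mu}{\int_{B(o,r)}|\nabla f|^{2}\,d\mu}\ \gtrsim\ \frac{V_{\ell}(r)h_{\ell}(r)^{2}}{h_{\ell}(r)}\ =\ V_{\ell}(r)h_{\ell}(r)\ =\ \max_{i\neq m}\{V_{i}(r)h_{i}(r)\},
\]
which is (\ref{general lower}). The step requiring the most care is the energy bound: one must show that the radial Dirichlet integral of $h_{\ell}(|x|)$ over $B(o,r)$ is governed by $h_{\ell}(r)$ rather than by a larger quantity, which is what forces the control of the shell measure $\frac{d}{dt}\mu(E_{\ell}\cap B(o,t))$ described above; the remaining ingredients (admissibility of $f$, the reverse-doubling shell estimate, the $\xi$-case split, and uniformity over the finitely many possible $\ell=\ell(r)$ and over the parabolic/non-parabolic dichotomy) are routine bookkeeping.
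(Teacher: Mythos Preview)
Your proof is correct and follows essentially the same approach as the paper: both use the radial test function $h_\ell(|x|)$ (up to a constant shift) supported on a single end $E_\ell$ with $\ell\neq m$, bound the Dirichlet energy by $h_\ell(r)$ via (\ref{VrV'}), and bound the variance from below by $V_\ell(r)h_\ell^2(r)$ using that $f$ vanishes on the largest end $E_m$. The only cosmetic difference is that the paper first isolates the general inequality
\[
\varepsilon\,\sup_{\mu(B\cap\{f=0\})\ge\varepsilon\mu(B)}\frac{\int_B|f|^2\,d\mu}{\int_B|\nabla f|^2\,d\mu}\ \le\ \Lambda(B)
\]
(proved by Cauchy--Schwarz) and then applies it with $\varepsilon=1/(4k)$, whereas you carry out the equivalent $\xi$-case split directly; your shell estimate via reverse doubling and your remark on the parabolic/non-parabolic dichotomy for $h_\ell$ are likewise the same ingredients the paper uses, just made more explicit.
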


Let $n=n(r)$ be the index of the second largest end. Then the estimate in (%
\ref{general lower}) implies that 
\begin{equation}
\Lambda(B(o,r)) \geq c V_n(r)h_n(r).  \label{lower second}
\end{equation}
In view of the upper estimate of the Poincar\'e constant in Theorems \ref%
{main non-parab} and \ref{main parab}, the lower bound in (\ref{lower second}%
) is optimal if either all ends are non-parabolic, or $M$ is a manifold with
{\rm{(COE)}}.

\begin{proof}
Set $B=B(o,r)$. First we show that for any $\varepsilon \in (0,1)$, 
\begin{equation}
\varepsilon \sup_{\stackrel{ f\in C^{1}(B)}{\mu (B\cap \{f=0\})\geq
\varepsilon \mu (B) }}\frac{\int_{B}|f|^{2}d\mu }{\int_{B}|\nabla f|^{2}d\mu 
}\leq \Lambda (B,B).  \label{zero}
\end{equation}%
Indeed, for any $f\in C^{1}(B)$ with $\mu (B\cap \{f=0\})\geq \varepsilon
\mu (B)$, 
\begin{eqnarray*}
\int_{B}|f-f_{B}|^{2}d\mu& =& \int_{B}|f|^{2}d\mu -\mu (B)f_{B}^{2} \\
&=& \int_{B}|f|^{2}d\mu -\frac{1}{\mu (B)}\left( \int_{B\cap \{f\neq
0\}}fd\mu \right) ^{2}.
\end{eqnarray*}%
Cauchy-Schwarz inequality implies that 
\begin{eqnarray*}
\int_{B}|f-f_{B}|^{2}d\mu &\geq & \int_{B}|f|^{2}d\mu -\frac{\mu (B\cap
\{f\neq 0\})}{\mu (B)}\int_{B}|f|^{2}d\mu \\
&=& \frac{\mu (B\cap \{f=0\})}{\mu (B)}\int_{B}|f|^{2}d\mu \geq \varepsilon
\int_{B}|f|^{2}d\mu .
\end{eqnarray*}%
By the expression of the Poincar\'{e} constant (\ref{Lam sup}), we obtain
the estimate in (\ref{zero}).

Now we prove the estimate in (\ref{general lower}) by choosing a test
function. For $i \neq m$, take a $C^1$-function $f_i$ defined by 
\[
f_i(x)= \left\{ 
\begin{array}{ll}
h_i(|x|) & x \in B\cap E_i , \\ 
0 & x \in B\cap E_j, ~ j\neq i.%
\end{array}
\right.
\]
We note that the assumption (\ref{V max}) implies that
\[
\mu(B\cap E_m) \geq \frac{1}{2}\mu(B \cap E_j)
\]
for large $r \gg 1$ and all $j \in\{ 1 ,\ldots , k \}$. Because $f_i=0 $ in $B\cap E_m$, we
observe that 
\begin{eqnarray*}
\mu(B\cap \{f_i= 0 \}) &\geq & \mu(B \cap E_m) \\
&= & \frac{1}{2k}\left( 2k\mu(B\cap E_m) +\mu(K) \right) -\frac{1}{2k} \mu(K)
\\
&\ge & \frac{1}{2k}\left( \mu(B) - \mu(K) \right).
\end{eqnarray*}
Then for large $r\gg 1$ so that $\mu(B) \ge 2\mu(K)$, we obtain 
\[
\mu(B\cap \{ f=0 \}) \geq \frac{1}{4k} \mu(B).
\]
Hence, (\ref{zero}) implies that 
\[
\frac{1}{4k} \frac{ \int_{B}|f_i|^2 d\mu}{\int_{ B} |\nabla f_i|^2 d\mu}
\leq \Lambda(B, B).
\]
By using the assumptions (VD) and (\ref{VrV'}), we obtain for large $r\gg 1$ 
\begin{eqnarray*}
\int_B |f_i |^2 d\mu& \geq & \int_{(B\backslash \frac{1}{2}B)\cap E_i}
|f_i|^2 d\mu \\
&\ge &\left( \mu(B\cap E_i) - \mu( \frac{1}{2}B\cap E_i) \right)
\inf_{(B\backslash \frac{1}{2}B) \cap E_i } |f_i|^2 \\
&\geq &cV_i(r) h_i^2(r), \\
\int_{ B} |\nabla f_i |^2 d\mu &=& \int_{B(o, r_0)} |\nabla f_i |^2 d\mu +
\int_{r_0}^{ r} \left| \partial_s h_i (s) \right|^2 V^\prime (s) ds \\
&=& C +\int_{r_0}^{ r} \frac{s}{V_i(s)} \frac{sV^\prime (s)}{V(s)} ds
\leq C+Ch_i(r) \\
&\leq & C^\prime h_i( r).
\end{eqnarray*}
Then we obtain for any $i\neq m$, 
\[
cV_i(r)h_i(r) \leq \Lambda (B)
\]
which concludes (\ref{general lower}).
\end{proof}

\section{Appendix: Spectral gap on central balls}
\setcounter{equation}{0}
\label{section Whitney}
Recall that $M$ is a connected sum of manifolds $M_1, \ldots , M_k$ with a central
part $K$. 
The purpose of this section is to obtain a general upper
bound of the Poincar\'e constant $\Lambda(B(o,r))=\Lambda(B(o, r), B(o,r))$ at the central reference 
point $o \in K$ for any large $r>0$ by using a collection of $\Lambda(B(x, s), B(x,
\kappa s))$, where $B(x, s) \subset B(o, r)$ and $\kappa \geq 1$. This is an 
important procedure to obtain a lower bound of the spectral gap $\lambda (B(o,r))$ of $-\Delta$ on $B(o,r)$ defined in (\ref{ev}).
%
First of all, we only assume the volume doubling condition
(VD) on each $M_i$. 

The main tool to obtain a desired bound is a \textit{Whitney covering} $\mathcal{W}$ of 
$B(o,r)$ which is a collection of balls defined as the following.

\begin{definition}
For any $0<\eta < 1$, a collection of balls $\mathcal{W}=\mathcal{W}(\eta)$ in $B(o,r)$
is called a Whitney covering of $B(o,r)$ with parameter $\eta$ if
\begin{enumerate}
\item[(W1)] All $ F\in \mathcal{W}$ are disjoint,
\item[(W2)] $\cup_{F \in \mathcal{W}} 3F =B(o,r)$,
\item[(W3)] $  r(F)=\eta d(F, B(o,r)^c)$. Here $r(F)$ is the radius of $F$ and $B(o,r)^c$ is 
the complement of $B(o,r)$.
\item[(W4)] For any $\alpha\geq 1$, there exists $N=N(\eta, \alpha)$ 
independent of $r$ such that for any $x \in B(o,r)$
\[
               \# \{ F \in \mathcal{W} ~:~ x \in \alpha F \} \leq N.
\]
\end{enumerate}
\end{definition}
It is a well-known fact that there exists such a covering. 
For $F \in \mathcal{W}$, we denote by $\gamma_F$ a distance-minimizing curve 
joining the center of $F$ and $o$. Let $\mathcal{F}(F)=
(F_0, F_1, \ldots , F_{l(F)})$ be a string of $F$, that is, a sequence of balls in $\mathcal{W}$ 
so that $3F_j \cap 3F_{j+1} \neq \emptyset$ ($j=0, 1, \ldots , l(F)-1$), 
$o \in 3F_0$ and $F_{l(F)}=F$. It is known that there exists such a string 
$\mathcal{F}(F)$ for any $F \in \mathcal{W}$. See \cite[Section 5.3.3]{SC LNS} for the detail.
\begin{proposition}[c.f. \mbox{\cite[Section 5.3.3]{SC LNS}} ]
Let $\mathcal{W}$ be a Whitney covering of $B(o,r)$ with parameter $0<\eta< 1/4$. 
Then $\mathcal{W}$ satisfies the following. 
\begin{enumerate}
\item[{\rm{(PW1)}}] 
(  \cite[Lemma 5.3.6]{SC LNS}) 
For any $F, F^\prime \in \mathcal{W}$ so that $3F^\prime \cap \gamma_{F} \neq \emptyset$, 
\[
r(F^\prime) \geq \frac{1}{4\eta +1} r(F).
\]

\item[{\rm{(PW2)}}] 
(\cite[Lemma 5.3.7]{SC LNS}) 
For $F \in \mathcal{W}$, let $\mathcal{F}(F)=
(F_0, F_1, \ldots , F_{l(F)})$ be a string of $F$. Then for any $j=0, 1, \ldots ,l(F)-1$, 
\begin{eqnarray*}
(\eta^{-1}-4)r(F_j) &\leq & (4+\eta^{-1})r(F_{j+1}) \\
(\eta^{-1}-4)r(F_{j+1}) &\leq & (4+\eta^{-1}) r(F_j),
\end{eqnarray*}
and 
\begin{equation}
3F_{j+1} \subset \left( 6\frac{\eta^{-1}+4}{\eta^{-1}-4} +3 \right) F_j.
\label{Fj}
\end{equation}
\end{enumerate}
\end{proposition}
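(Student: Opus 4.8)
The plan is to obtain (PW1) and (PW2) directly from the defining properties (W1)--(W4) of a Whitney covering, and in particular from the scaling relation (W3), so that these are precisely \cite[Lemmas 5.3.6 and 5.3.7]{SC LNS}; I will only indicate the short metric arguments and leave the constant bookkeeping, which is carried out in \cite{SC LNS}, to the reader. Throughout, write $x_F$ for the center of $F\in\mathcal{W}$, so that (W3) says that the boundary distance $d(x_F,B(o,r)^{c})$ is comparable to $\eta^{-1}r(F)$ with an explicit constant (coming from $d(x_F,B(o,r)^{c})=d(F,B(o,r)^{c})+r(F)$), and recall that inside $B(o,r)$ one has $d(x,B(o,r)^{c})=r-d(x,o)$.

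For (PW1), fix a point $z\in 3F'\cap\gamma_F$. Since $\gamma_F$ minimizes the distance from $x_F$ to $o$, every point of $\gamma_F$ is at least as close to $o$ as $x_F$ is, hence $d(z,B(o,r)^{c})\ge d(x_F,B(o,r)^{c})$. Combining this with the triangle inequality $d(x_{F'},B(o,r)^{c})\ge d(z,B(o,r)^{c})-d(z,x_{F'})$, with $d(z,x_{F'})<3r(F')$ coming from $z\in 3F'$, and finally converting boundary distances into radii via (W3), I obtain $r(F')(1+c\eta)\gtrsim r(F)$, which after tracking the constant becomes exactly $r(F')\ge(4\eta+1)^{-1}r(F)$.

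For (PW2), consecutive balls of a string overlap in the sense $3F_j\cap 3F_{j+1}\neq\emptyset$, so their centers satisfy $d(x_{F_j},x_{F_{j+1}})<3r(F_j)+3r(F_{j+1})$. Feeding this into the Lipschitz bound for $x\mapsto d(x,B(o,r)^{c})$ and using (W3) to replace boundary distances by radii yields $|r(F_j)-r(F_{j+1})|\lesssim\eta\,(r(F_j)+r(F_{j+1}))$, which rearranges into the two stated inequalities $(\eta^{-1}-4)r(F_j)\le(\eta^{-1}+4)r(F_{j+1})$ and its symmetric counterpart (here the hypothesis $\eta<1/4$ is exactly what makes $\eta^{-1}-4>0$, so these bounds are meaningful). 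The inclusion (\ref{Fj}) then follows by one more triangle inequality: for $w\in 3F_{j+1}$ one has $d(w,x_{F_j})\le d(w,x_{F_{j+1}})+d(x_{F_{j+1}},x_{F_j})<6r(F_{j+1})+3r(F_j)$, and bounding $r(F_{j+1})$ in terms of $r(F_j)$ by the ratio estimate just obtained puts $w$ inside $\bigl(6\tfrac{\eta^{-1}+4}{\eta^{-1}-4}+3\bigr)F_j$.

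Since every step is a triangle inequality together with the single algebraic relation (W3), there is no genuine obstacle here: the proposition is a purely metric fact, independent of the manifold-with-ends structure, and its proof is identical to the one in \cite[Section 5.3.3]{SC LNS}. The only point requiring a little care is to pin down the precise normalization of the dilates $3F$ and of $d(F,B(o,r)^{c})$ used in \cite{SC LNS}, so that the constants in the displayed inequalities come out exactly as stated; for the applications in the sequel only the existence of such constants, uniform in $r$, matters.
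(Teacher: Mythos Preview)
Your proposal is correct and follows exactly the approach of \cite[Section~5.3.3]{SC LNS}, which is precisely what the paper does: this proposition is stated with a citation and no independent proof is given. Your metric arguments (geodesic points are no farther from $o$ than $x_F$, hence no closer to $\partial B(o,r)$; Lipschitz bound on $x\mapsto d(x,B(o,r)^c)$ for consecutive balls; one more triangle inequality for the inclusion) are the standard ones, and your caveat about normalization of $d(F,B(o,r)^c)$ versus $d(x_F,B(o,r)^c)$ correctly identifies the only place where the exact displayed constants need checking.
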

We note that $6\frac{\eta^{-1}+4}{\eta^{-1}-4} +3 <12 $ if $\eta<1/20$. 
Thus, we always assume that $\eta <1/20$ in the sequel.

\begin{lemma}\label{lemB0}
For any $\kappa \geq 1$ and any $0<\eta<1/20$, let $B_0=B(o, 12\kappa \eta r)$. 
Then for  any $r$ large enough so that 
$\mathrm{diam}K \leq \eta r$ and for any $F \in  \mathcal{W}$ so that 
\begin{equation}
F \cap B_0 =\emptyset,
\label{B0}
\end{equation}
we have 
\[
12\kappa F \cap K=\emptyset.
\]
\end{lemma}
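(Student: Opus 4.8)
The plan is to reduce the assertion to a single metric inequality and obtain it from the triangle inequality. Write $x_F$ for the centre and $r(F)$ for the radius of the Whitney ball $F$, so that $12\kappa F = B(x_F, 12\kappa r(F))$; then $12\kappa F \cap K = \emptyset$ is exactly the statement that $d(x_F, z) \ge 12\kappa r(F)$ for every $z \in K$.

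First I would record three elementary facts. \emph{(a)} From {\rm{(W3)}}, $r(F) = \eta\, d\!\left(F, B(o,r)^{c}\right)$; since $F \subset B(o,r)$ and $B(o,r)$ is a metric ball of radius $r$, every point of it lies within distance $r$ of the complement $B(o,r)^{c}$ (the largest value of the distance-to-complement on $B(o,r)$ being $d\!\left(o, B(o,r)^{c}\right) = r$, attained at $o$), so $d\!\left(F, B(o,r)^{c}\right) \le r$ and hence $r(F) \le \eta r$. \emph{(b)} The hypothesis $F \cap B_{0} = \emptyset$, with $F = B(x_F, r(F))$ and $B_{0} = B(o, 12\kappa\eta r)$ open balls in the complete manifold $M$, forces $d(o, x_F) \ge r(F) + 12\kappa\eta r$: two open balls $B(a,\rho_{1})$, $B(b,\rho_{2})$ are disjoint precisely when $d(a,b) \ge \rho_{1} + \rho_{2}$ (for the non-trivial direction use a minimising geodesic from $a$ to $b$, which exists by completeness). \emph{(c)} Since $o \in K$ and $\mathrm{diam}\,K \le \eta r$ (true once $r \ge \mathrm{diam}\,K / \eta$), we have $K \subset \overline{B(o, \eta r)}$, so $d(x_F, z) \ge d(o, x_F) - d(o, z) \ge d(o, x_F) - \eta r$ for all $z \in K$.

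Then I would simply chain the estimates: for any $z \in K$,
\[
d(x_F, z) \ \ge\ d(o,x_F) - \eta r \ \ge\ r(F) + (12\kappa - 1)\eta r \ \ge\ r(F) + (12\kappa - 1) r(F) \ =\ 12\kappa\, r(F),
\]
where the second inequality is \emph{(b)} and the third uses $\kappa \ge 1$ (so that $12\kappa - 1 > 0$) together with $r(F) \le \eta r$ from \emph{(a)}. Hence $z \notin B(x_F, 12\kappa r(F)) = 12\kappa F$, which is what was claimed.

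The only step that genuinely requires care is \emph{(a)}, i.e.\ the bound $r(F) \le \eta r$; everything else is bookkeeping. It amounts to the fact that the \emph{inradius} of the metric ball $B(o,r)$ equals $r$, so that no Whitney ball sitting inside $B(o,r)$ can be at distance more than $r$ from $B(o,r)^{c}$ --- or, more directly, it is built into the Whitney construction, where each radius is $\eta$ times the distance to $B(o,r)^{c}$ and is therefore automatically at most $\eta r$. I would also observe that, apart from $\eta < 1$, the precise value of $\eta$ (in particular $\eta < 1/20$) is not used in this lemma; what is essential is $\kappa \ge 1$ and the matching of the dilation factor $12\kappa$ in the conclusion with the radius $12\kappa\eta r$ of $B_{0}$.
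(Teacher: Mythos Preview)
Your proof is correct and follows essentially the same route as the paper: both use $r(F)\le \eta r$ from {\rm{(W3)}}, derive $d(o,x_F)\ge r(F)+12\kappa\eta r$ from the disjointness $F\cap B_0=\emptyset$, and then absorb one $\eta r$ into $\mathrm{diam}\,K$ and the remaining $(12\kappa-1)\eta r$ into $(12\kappa-1)r(F)$. The only difference is that you spell out the justification of each of the three ingredients a little more carefully than the paper does.
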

\begin{proof}
Since $r(F) \leq \eta r$ by (W3), the condition (\ref{B0}) implies that 
\begin{eqnarray*}
d(o, o(F)) &\geq &r(F) +r(B_0)=r(F)+12 \kappa  \eta r\\
&=& r(F) +(12\kappa -1)\eta r + \eta r \\
&\geq & 12\kappa  r(F) + \mathrm{diam} K,
\end{eqnarray*}
which concludes the lemma.
\end{proof}

In the sequel, we always assume that $r$ is large enough so that 
\begin{equation}
\mathrm{diam}K \leq \eta r.
\label{eta r}
\end{equation}

\begin{lemma}\label{lem3B0}
For $F\in \mathcal{W}$, if $3F \not \subset 3B_0$, then $F \cap B_0=\emptyset$.
\end{lemma}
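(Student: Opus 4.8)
The plan is to prove the contrapositive: assuming $F\cap B_0\neq\emptyset$, show that $3F\subset 3B_0$. This is a routine ``nested balls'' estimate using the Whitney property (W3) together with the standing assumption $\eta<1/20$.

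First I would fix a point $z\in F\cap B_0$, so that $d(o,z)\le r(B_0)=12\kappa\eta r$. For any point $w\in 3F$ we have, by the triangle inequality, $d(o,w)\le d(o,z)+d(z,w)$. Since $z\in F$ and $w\in 3F$, both $z$ and $w$ lie within distance $3r(F)$ of the center $o(F)$ of $F$; more crudely, $d(z,w)\le d(z,o(F))+d(o(F),w)\le r(F)+3r(F)=4r(F)$. By (W3) we have $r(F)=\eta\,d(F,B(o,r)^c)\le \eta r$ (the distance from any subset of $B(o,r)$ to the complement is at most $r$, indeed here even smaller, but $\le \eta r$ suffices since $\mathrm{diam}\,K\le\eta r$ is not even needed — only $r(F)\le\eta r$). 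Hence
\[
d(o,w)\le 12\kappa\eta r+4\eta r\le 12\kappa\eta r+4\kappa\eta r=16\kappa\eta r.
\]
Wait — I should be more careful to land inside $3B_0=B(o,36\kappa\eta r)$, which is comfortably larger; the bound $16\kappa\eta r<36\kappa\eta r$ already gives $w\in 3B_0$. Actually even the cruder bound $d(o,w)\le d(o,z)+d(z,o(F))+d(o(F),w)\le 12\kappa\eta r + r(F) + 3r(F)\le 12\kappa\eta r+4\eta r\le 16\kappa\eta r$ works, using $\kappa\ge1$. Since $16\kappa\eta r\le 36\kappa\eta r$, we conclude $w\in B(o,36\kappa\eta r)=3B_0$, and as $w\in 3F$ was arbitrary, $3F\subset 3B_0$.

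There is essentially no obstacle here; the only thing to watch is getting the numerical constants to line up so that the final radius is strictly (or non-strictly) dominated by the radius of $3B_0$, which it is with room to spare. The argument uses only (W3) and the definition $B_0=B(o,12\kappa\eta r)$ from Lemma~\ref{lemB0}; the hypothesis $\eta<1/20$ is available but not actually required for this particular implication. One writes it up in two or three lines: pick $z\in F\cap B_0$, bound $d(o,w)$ for $w\in 3F$ by $d(o,z)+4r(F)\le 12\kappa\eta r+4\eta r\le 36\kappa\eta r$, done.
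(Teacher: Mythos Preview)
Your proof is correct and is essentially identical to the paper's: both argue the contrapositive by picking $z\in F\cap B_0$ and bounding $d(o,w)\le d(o,z)+d(z,w)\le r(B_0)+4r(F)\le 12\kappa\eta r+4\eta r\le 3r(B_0)$ for any $w\in 3F$. The only difference is presentational---your write-up meanders a bit before settling on the clean two-line version you describe at the end.
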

\begin{proof}Suppose that $F\cap B_0 \neq \emptyset$. Then 
for any $x \in 3F$ and $z \in F \cap B_0$, 
\begin{eqnarray*}
d(o, x) &\leq &d(o, z)+d(z, x) \\
& \leq &r(B_0) +4r(F) \\
&\leq &12\kappa \eta r +4 \eta r \leq 3r(B_0).
\end{eqnarray*}
This implies that $3F \subset 3B_0$. 
By contraposition, we conclude the lemma.
\end{proof}

\begin{lemma}\label{lemF1}
For $F \in \mathcal{W}$, if $3F \cap 3B_0 \neq \emptyset$, then 
\[
3F \subset B_1=B(o,(36\kappa  +6) \eta r).
\]
\end{lemma}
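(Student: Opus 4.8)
The plan is to prove that if $3F \cap 3B_0 \neq \emptyset$ then $3F \subset B_1 = B(o,(36\kappa+6)\eta r)$ by a direct triangle-inequality estimate, exactly in the spirit of the preceding Lemmas \ref{lem3B0} and \ref{lemF1}'s siblings. First I would pick an arbitrary point $z \in 3F \cap 3B_0$ and an arbitrary point $x \in 3F$; the goal is to bound $d(o,x)$ from above by $(36\kappa+6)\eta r$.

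Next I would estimate $d(o,z)$ and $d(z,x)$ separately. Since $z \in 3B_0$ and $B_0 = B(o,12\kappa\eta r)$, we get $d(o,z) < 3 \cdot 12\kappa\eta r = 36\kappa\eta r$. For $d(z,x)$, since both $z$ and $x$ lie in $3F$, and $3F$ has diameter at most $2 \cdot 3 r(F) = 6 r(F) \leq 6\eta r$ by (W3) (which gives $r(F) = \eta\, d(F,B(o,r)^c) \leq \eta r$), we obtain $d(z,x) \leq 6\eta r$. Then by the triangle inequality
\[
d(o,x) \leq d(o,z) + d(z,x) < 36\kappa\eta r + 6\eta r = (36\kappa+6)\eta r,
\]
so $x \in B_1$. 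Since $x \in 3F$ was arbitrary, this shows $3F \subset B_1$, which is the claim.

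I do not anticipate a genuine obstacle here; this is a routine packing/triangle-inequality estimate of the same flavor as Lemmas \ref{lemB0}, \ref{lem3B0}, and the preceding lemma, and the only points requiring care are (i) using (W3) correctly to bound $r(F) \leq \eta r$ (note $d(F, B(o,r)^c) \leq r$ since $F \subset B(o,r)$ and $o \in F \subset B(o,r)$ forces the center-to-complement distance to be at most $r$), and (ii) keeping the constants consistent with the earlier assumption $\eta < 1/20$ and with the standing hypothesis $\mathrm{diam}\,K \leq \eta r$ from \eqref{eta r}, although the latter is not actually needed for this particular estimate. The statement is essentially the ``containment'' counterpart to Lemma \ref{lem3B0}'s ``separation'' dichotomy, and together they will be used to localize the Whitney balls near the central region when summing Poincar\'e constants over the covering.
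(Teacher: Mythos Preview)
Your proof is correct and follows exactly the same triangle-inequality argument as the paper: pick $z\in 3F\cap 3B_0$, bound $d(o,z)\le 36\kappa\eta r$ and $d(z,x)\le 6r(F)\le 6\eta r$, and conclude. The only slip is the parenthetical ``$o\in F$'' in your justification of $r(F)\le\eta r$; you presumably meant that the center $o(F)$ lies in $B(o,r)$, but the bound itself is right and matches what the paper uses.
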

\begin{proof}
For $x \in 3F$ and $z \in 3F \cap 3B_0$, we obtain
\begin{eqnarray*}
d(o, x) &\leq & d(o, z) + d(z , x)\\
&\leq & 36\kappa \eta r + 6r(F) \\
&\leq & (36 \kappa  + 6) \eta r,
\end{eqnarray*}
which concludes the lemma.
\end{proof}

Now we modify the Whitney covering $\mathcal{W}=\mathcal{W}(\eta)$
 to fit the manifolds with ends in 
concern. 
Set 
\[
\mathcal{W}^\prime =\{ F \in \mathcal{W} ~:~ 3F \not \subset 3B_0 \}.
\]
Here we note that 
\begin{equation}
B(o,r)=\cup_{F \in \mathcal{W}^\prime} 3F \cup 3B_0.
\label{W'}
\end{equation}
Moreover, for $i=1,\ldots, k$, set 
$\mathcal{W}_i^\prime=\{ F \in \mathcal{W}^\prime ~:~ F \subset E_i \}$. 
We can easily see that they are disjoint each other by the definition, and,  Lemmas 
\ref{lemB0} and \ref{lem3B0} imply that
\[
\mathcal{W}^\prime =\cup_{i=1}^k \mathcal{W}_i^\prime.
\]
For $F\in \mathcal{W}_i^\prime$, we retake a string $\mathcal{F}(F)
=(F_0, F_1, \ldots, F_{l^\prime (F)})$ given in (PW2) so that $F_j \subset E_i$ 
($j\neq 0$) and $F_0=B_0$. 

The following  inclusion conditions play a key role in the proof of 
Theorem \ref{main Whitney}.
\begin{lemma}For a constant $\kappa \geq 1$, let $\eta>0$ be a parameter satisfying
\begin{equation}
\eta<
\frac{1}{\kappa(36\kappa +6)} 
.
\label{eta}
\end{equation}
For such $\eta>0$, let $r>0$ be a number 
so that (\ref{eta r}) holds. Then we have
\begin{eqnarray}
3\kappa B_0 &\subset& \kappa B_1\subset B(o,r),  \label{inc1} \\
12\kappa F & \subset & B(o, r) \cap E_i \quad (F\in \mathcal{W}_i^\prime) 
\label{inc2}.
\end{eqnarray}
\end{lemma}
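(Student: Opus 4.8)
The plan is to verify the two inclusions in \eqref{inc1} and \eqref{inc2} directly from the definitions of the balls $B_0$, $B_1$ and from the geometric lemmas just proved, using only the volume-independent facts about the Whitney covering. All balls involved are centered at $o$ except the Whitney balls $F$, so the radii comparisons are elementary provided we track the radii carefully. Recall $B_0=B(o,12\kappa\eta r)$ and $B_1=B(o,(36\kappa+6)\eta r)$, and that $r$ satisfies \eqref{eta r} and $\eta$ satisfies \eqref{eta}.

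First I would prove \eqref{inc1}. The inclusion $3\kappa B_0\subset\kappa B_1$ amounts to $3\kappa\cdot 12\kappa\eta r\le\kappa\cdot(36\kappa+6)\eta r$, i.e. $36\kappa^2\le\kappa(36\kappa+6)$, which is clear since $36\kappa^2\le36\kappa^2+6\kappa$. For the second inclusion $\kappa B_1\subset B(o,r)$ we need $\kappa(36\kappa+6)\eta r\le r$, which is exactly the hypothesis \eqref{eta} on $\eta$. This disposes of \eqref{inc1}.

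Next I would prove \eqref{inc2}. Fix $F\in\mathcal{W}_i'$; then by definition of $\mathcal{W}'$ we have $3F\not\subset 3B_0$, so by Lemma \ref{lem3B0} we get $F\cap B_0=\emptyset$. Since $\eta<1/20$, Lemma \ref{lemB0} applies and yields $12\kappa F\cap K=\emptyset$; because $F\subset E_i$ and $12\kappa F$ is connected and meets $E_i$ but not $K$, it must lie entirely in the single end $E_i$, giving $12\kappa F\subset E_i$. It remains to show $12\kappa F\subset B(o,r)$. For this, use (W3): $r(F)=\eta\,d(F,B(o,r)^c)\le\eta r$, and for $x\in 12\kappa F$ and any $z\in F$ we have
\[
d(o,x)\le d(o,z)+d(z,x)\le d(o,z)+24\kappa\, r(F).
\]
Now take $z$ to be (close to) the point of $F$ nearest to $o$; then $d(o,z)\le d(o,F)+2r(F)$, and since $F\cap B_0=\emptyset$ we control nothing directly, so instead I would bound $d(o,z)$ via the Whitney property more carefully: pick the center $o(F)$ of $F$, so $d(o,o(F))\le d(F,B(o,r)^c)^{-1}$-type estimates are not needed; rather, $d(o,o(F))+r(F)\le r$ because $F\subset B(o,r)$, whence $d(o,o(F))\le r-r(F)$ and for $x\in 12\kappa F$,
\[
d(o,x)\le d(o,o(F))+12\kappa\, r(F)\le r-r(F)+12\kappa\, r(F)=r+(12\kappa-1)r(F).
\]
This is not yet $\le r$, so I must instead exploit that $r(F)=\eta\,d(F,B(o,r)^c)$ gives $d(o(F),B(o,r)^c)\ge r(F)/\eta$, hence $d(o,o(F))\le r-r(F)/\eta$, and then
\[
d(o,x)\le r-r(F)/\eta+12\kappa\, r(F)=r-r(F)\bigl(\eta^{-1}-12\kappa\bigr)\le r,
\]
where the last step uses $\eta^{-1}>12\kappa$, which follows from \eqref{eta} since $\kappa(36\kappa+6)>12\kappa$ for $\kappa\ge1$. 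This establishes \eqref{inc2}.

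The main obstacle is the last computation: getting $12\kappa F\subset B(o,r)$ requires using the Whitney relation $r(F)=\eta\,d(F,B(o,r)^c)$ in the sharp form $d(o(F),B(o,r)^c)=r(F)/\eta$ (up to the $\pm r(F)$ correction between $F$ and its center), rather than the cruder bound $F\subset B(o,r)$; with the correct inequality the dilation factor $12\kappa$ is absorbed precisely because $\eta$ was chosen in \eqref{eta} to beat $\kappa(36\kappa+6)\ge 12\kappa$. Everything else is bookkeeping with the constants already fixed in \eqref{eta} and the already-proved Lemmas \ref{lemB0}, \ref{lem3B0}, \ref{lemF1}.
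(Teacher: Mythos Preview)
The paper does not give a proof of this lemma (it is stated without proof and then the main theorem of the section follows), so there is no paper argument to compare against. Your verification of \eqref{inc1} is correct, and your argument that $12\kappa F\subset E_i$ for $F\in\mathcal{W}_i'$, via Lemmas \ref{lem3B0} and \ref{lemB0} together with the connectedness of $12\kappa F$ and the decomposition $M\setminus K=\sqcup_j E_j$, is also correct.

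There is one small gap in the step $12\kappa F\subset B(o,r)$. From $d(o(F),B(o,r)^c)\ge r(F)/\eta$ you conclude $d(o,o(F))\le r-r(F)/\eta$. This would require $d(o(F),B(o,r)^c)\le r-d(o,o(F))$, but on a general Riemannian manifold only the reverse inequality $d(o(F),B(o,r)^c)\ge r-d(o,o(F))$ is guaranteed by the triangle inequality; equality holds in $\mathbb{R}^n$, but on an arbitrary complete manifold the minimizing geodesic from $o$ to $o(F)$ need not extend to a minimizing geodesic from $o$ of length $r$. The fix is to bypass this detour entirely: since $o(F)\in F$ we have
\[
d\bigl(o(F),B(o,r)^c\bigr)\ge d\bigl(F,B(o,r)^c\bigr)=\frac{r(F)}{\eta}>12\kappa\,r(F),
\]
the last inequality because \eqref{eta} gives $\eta<1/\bigl(\kappa(36\kappa+6)\bigr)\le 1/(12\kappa)$ for $\kappa\ge 1$. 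Hence $B(o(F),12\kappa\,r(F))\cap B(o,r)^c=\emptyset$, i.e.\ $12\kappa F\subset B(o,r)$, which is exactly what you need.
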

The main result of this section is the following.
\begin{theorem}\label{main Whitney}
Let $M$ be a connected sum of manifolds $M_1, \ldots , M_k$ with a compact central part $K$, where each $M_i$ admits {\rm{(VD)}}. 
Let $o \in K$ be a central reference point. 
Fix $\kappa \geq 1$. Fix $\eta>0$ satisfying (\ref{eta}).
For any $r>0$ satisfying $\mathrm{diam}K \leq \eta r$, 
there exists a constant $C>0$ 
such that
\begin{eqnarray*}
&&\!\!\!\!\!\!\!\!\!\!\! \Lambda (B(o,r))  \\
&&\!\!\!\!\!\!\!\!\!\!\! \leq C\left( 
\Lambda(3B_0, 3\kappa B_0)+ \Lambda(B_1, \kappa B_1)+\max_{F \in \mathcal{W}^\prime(\eta)} 
\{ \Lambda( 3F, 3\kappa F), \Lambda(12F, 12\kappa  F) \}
\right),
\end{eqnarray*}
where $B_0=B(o, 12\kappa \eta r)$ and 
$B_1=B(o, (36\kappa +6)\eta r)$. 
\end{theorem}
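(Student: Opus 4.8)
The plan is to decompose an arbitrary $f \in C^1(\overline{B(o,r)})$ across the Whitney covering and to control $\int_{B(o,r)}|f - f_{B(o,r)}|^2 d\mu$ by a telescoping argument along strings, exactly in the spirit of \cite[Section 5.3.3]{SC LNS}. First I would fix $f$ and, for each $F \in \mathcal{W}'(\eta)$, set $a_F = f_{3F}$ (the $\mu$-average of $f$ on $3F$) and $a_0 = f_{3B_0}$. Using (W2) (refined as (\ref{W'})), I would write
\[
\int_{B(o,r)}|f - a_0|^2 d\mu \leq \sum_{F \in \mathcal{W}'}\int_{3F}|f-a_0|^2 d\mu + \int_{3B_0}|f-a_0|^2 d\mu,
\]
and then split $|f-a_0|^2 \lesssim |f - a_F|^2 + |a_F - a_0|^2$ on each $3F$. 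The first piece is immediately controlled: $\int_{3F}|f-a_F|^2 d\mu \leq \Lambda(3F,3\kappa F)\int_{3\kappa F}|\nabla f|^2 d\mu$, and by the bounded-overlap property (W4) together with (\ref{inc2}) (so that $3\kappa F \subset 12\kappa F \subset B(o,r)\cap E_i$, keeping everything inside the ball), these sum to $\lesssim \big(\max_F \Lambda(3F,3\kappa F)\big)\int_{B(o,r)}|\nabla f|^2 d\mu$. The $3B_0$ term is handled directly by $\Lambda(3B_0, 3\kappa B_0)$ after passing from $a_0 = f_{3B_0}$ to $f_{B(o,r)}$ at the very end.

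The heart of the argument is estimating $\sum_F \mu(3F)\,|a_F - a_0|^2$. For this I would use the string $\mathcal{F}(F) = (F_0,\dots,F_{l'(F)})$ with $F_0 = B_0$ (noting $a_{F_0}$ should be read as $a_0$, or one inserts one comparison between $f_{3B_0}$ and $f_{3 F_1}$ via $3F_1 \cap 3B_0 \neq \emptyset$), writing $a_F - a_0 = \sum_{j} (a_{F_{j+1}} - a_{F_j})$ and applying Cauchy--Schwarz in the length of the string. Consecutive averages are compared using (\ref{Fj}): since $3F_{j+1} \subset c_\eta F_j$ with $c_\eta < 12$, doubling (VD) on $M_i$ gives $\mu(3F_{j+1}) \simeq \mu(c_\eta F_j) \simeq \mu(12F_j)$, and the Poincaré couple inequality on $(12 F_j, 12\kappa F_j)$ yields
\[
|a_{F_{j+1}} - a_{F_j}|^2 \lesssim \frac{1}{\mu(12F_j)}\int_{12 F_j}|f - f_{12F_j}|^2 d\mu + \big(\text{similar with }3F_{j+1}\big) \lesssim \frac{\Lambda(12F_j,12\kappa F_j)}{\mu(12F_j)}\int_{12\kappa F_j}|\nabla f|^2 d\mu.
\]
Summing over $j$ and then over $F$, and swapping the order of summation so that each ball $12\kappa F_j$ is counted once weighted by how many strings pass through it, reduces everything to the standard Whitney-string combinatorial lemma: the number of $F \in \mathcal{W}'$ whose string $\mathcal{F}(F)$ meets a fixed ball $G$, with a geometrically decaying weight in the radius ratio, is bounded by a constant times $\mu(\text{ball around }G)/\mu(G)$ — this is (PW1) plus (W4) plus doubling, precisely as in \cite[Section 5.3.3]{SC LNS}. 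This produces the $\max_F \Lambda(12F,12\kappa F)\int_{B(o,r)}|\nabla f|^2 d\mu$ contribution. The only place the $\Lambda(B_1,\kappa B_1)$ term is needed is to absorb the ``base'' balls $3F_1$ that abut $3B_0$: by Lemma \ref{lemF1} they all lie in $B_1$, and (\ref{inc1}) guarantees $\kappa B_1 \subset B(o,r)$, so one comparison of $f_{3F_1}$ with $f_{B_1}$ and $f_{3B_0}$ with $f_{B_1}$ (each costing $\Lambda(B_1,\kappa B_1)$ up to doubling constants) glues the central ball to the strings.

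The main obstacle I expect is bookkeeping the string combinatorics while keeping every dilated ball inside $B(o,r)$ and inside a single end $E_i$ — this is exactly why the parameter constraint (\ref{eta}), $\eta < \frac{1}{\kappa(36\kappa+6)}$, and Lemmas \ref{lemB0}--\ref{lemF1} were set up: Lemma \ref{lemB0} ensures $12\kappa F$ misses $K$ (hence sits in one end) whenever $F$ avoids $B_0$, Lemma \ref{lem3B0} guarantees $F \in \mathcal{W}'$ indeed avoids $B_0$, and (\ref{inc1})--(\ref{inc2}) give the ambient containments. Once those inclusions are in hand, no end-specific analysis is needed — only (VD) on each $M_i$ enters, through doubling of $\mu$ restricted to each end. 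The final step is the elementary passage from $f_{3B_0}$ to $f_{B(o,r)}$: since $\inf_\xi \int_{B(o,r)}|f-\xi|^2 d\mu$ is attained at $\xi = f_{B(o,r)}$, we may replace $a_0$ by $f_{B(o,r)}$ in $\int_{B(o,r)}|f-a_0|^2 d\mu$ at the cost of only increasing the left side, giving the claimed bound with a universal constant $C = C(\eta,\kappa,k)$.
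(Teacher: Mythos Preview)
Your proposal is correct and follows essentially the same route as the paper: both carry out the Whitney-covering chain argument of \cite[Section~5.3.3]{SC LNS}, splitting $\int_{B(o,r)}|f-f_{3B_0}|^2$ into a central piece ($I$), local pieces on each $3F$ ($II$), a telescoping string piece from $3F$ down to $3F_1$ ($III_i$), and a base comparison $f_{3F_1}$ versus $f_{3B_0}$ through $B_1$ ($IV_i$). One small caution: the paper does \emph{not} use ``Cauchy--Schwarz in the length of the string'' for $III_i$ --- a naive application would introduce an unbounded factor $l'(F)$ --- but instead squares the telescoped sum directly, inserts the indicator $\chi_{AF_j\cap E_i}$ (via \cite[Lemma~5.3.8]{SC LNS}), drops the string constraint, and applies the $L^2$ maximal-type inequality \cite[Lemma~5.3.12]{SC LNS}; your ``geometrically decaying weight'' phrasing and the reference you cite make clear you have an equivalent mechanism in mind.
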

We note that by (\ref{inc2}), if $F\in \mathcal{W}_i^\prime$, then $12\kappa F \subset E_i$.
Hence, the Poincar\'e constants $ \Lambda( 3F, 3\kappa F)$ and 
$\Lambda(12F, 12\kappa  F)$ for $F\in  \mathcal{W}_i^\prime $
can be computed by using a local information of $E_i$.

\begin{proof}
By using (\ref{W'}), we have
\begin{eqnarray}
&&\!\!\!\!\!\!\!\!\!\!\!\!\! \!\!\!\!\!\!\!\!\!\!\!\!\! \int_{B(o,r)}|f-f_{B(o,r)}|^2 d\mu 
\leq \int_{B(o,r)}|f-f_{3B_0} |^2d\mu  \nonumber \\
& \leq &\int_{3B_0} |f-f_{3B_0} |^2 d\mu + \sum_{F\in \mathcal{W}^\prime} 
\int_{3F} |f-f_{3B_0} |^2 d\mu \nonumber\\
&\leq &\int_{3B_0} |f-f_{3B_0} |^2 d\mu +4 \sum_{F\in \mathcal{W}^\prime} 
\int_{3F}   |f-f_{3F} |^2d\mu \nonumber \\
&&+
4 \sum_{i=1}^k\sum_{F\in \mathcal{W}_i^\prime}\int_{3F} |f_{3F} -f_{3F_1} |^2d\mu +
4 \sum_{i=1}^k \sum_{F\in \mathcal{W}_i^\prime}  \int_{3F}
| f_{3F_1} - f_{3B_0} |^2   d\mu \nonumber\\
&=&I+4II+4\sum_{i=1}^k III_i+4\sum_{i=1}^k IV_i,
\label{decomposition of I}
\end{eqnarray}
where $F_1$ is an element of the string $\mathcal{F}(F)$.
We estimate terms $I, II, III_i$ and $IV_i$ in (\ref{decomposition of I}) separately.
\subsubsection*{Estimate of $I$}
By using (\ref{inc1}), we obtain
\begin{eqnarray*}
I=\int_{3B_0} |f-f_{3B_0} |^2 d\mu &\leq & \Lambda(3B_0, 3\kappa B_0) 
\int_{3\kappa B_0} |\nabla f |^2 d \mu \\
&\leq & \Lambda(3B_0, 3\kappa B_0) 
\int_{B(o,r)} |\nabla f |^2 d \mu,
\end{eqnarray*}
which gives a desired bound.
\subsubsection*{Estimate of $II$}
By using (W2) and (W4), we obtain
\begin{eqnarray*}
II = \sum_{F\in \mathcal{W}^\prime} 
\int_{3F}   |f-f_{3F} |^2 d\mu&\leq & \sum_{F \in \mathcal{W}^\prime} 
\Lambda(3F, 3\kappa F) \int_{3\kappa F} |\nabla f |^2 d\mu \\
&\leq & \max_{F \in \mathcal{W}^\prime } \Lambda(3F, 3\kappa F) 
\sum_{F \in \mathcal{W}^\prime} \int_{3\kappa F} | \nabla f |^2 d\mu \\
&\leq &N(\eta, 3\kappa) \max_{F \in \mathcal{W}^\prime }\Lambda(3F, 3\kappa F) 
 \int_{B(o,r)} | \nabla f |^2 d\mu,
\end{eqnarray*}
which gives a desired bound.
\subsubsection*{Estimate of $III_i$}
For $F \in \mathcal{W}_i^\prime$, let $F_j, F_{j+1} \in \mathcal{F}(F)$ ($j\neq 0$). 
Then the Cauchy-Schwartz inequality implies that
\begin{small}
\[
| f_{3F_{j+1}}-f_{3F_j} | \leq \frac{1}{\mu( 3F_{j+1} )^{1/2} \mu( 3F_{j} )^{1/2}} 
\left( 
\int_{3F_{j+1} \times 3F_j }\!\!\!\!\!\!\! |f(x)-f(y) |^2 d\mu (x)d\mu(y)
\right)^{1/2} .
\]
\end{small}
We note that for any open set  $D \subset M$,
\begin{equation}
\int_{D\times D}|f(x)-f(y)|^2 d\mu (x)d\mu (y)=2\mu(D) \int_{D} |f-f_D|^2 d\mu.
\label{variance}
\end{equation}
Moreover, by using (\ref{Fj}), (\ref{variance}), and the volume doubling property of $M_i$, we have
\[
| f_{3F_{j+1}}-f_{3F_j} |
\leq  \left( \frac{C}{\mu(F_j)} \int_{12 F_j } |f-f_{12 F_j} |^2d\mu \right)^{1/2}.
\]
Then we obtain
\begin{eqnarray*}
|f_{3F}-f_{3F_1}|&\leq & \sum_{j=1}^{l^\prime (F)-1}| f_{3F_{j+1}}-f_{3F_j} | \\
&\leq & \sum_{j=1}^{l^\prime (F)-1}
\left( \frac{C}{\mu(F_j)} \int_{12 F_j } |f-f_{12F_j} |^2d\mu \right)^{1/2}\\
&\leq& \sum_{j=1}^{l^\prime (F)-1}
\left( \frac{C\Lambda(12F_j, 12\kappa  F_j )}{\mu(F_j)} \int_{12\kappa  F_j } |\nabla f |^2d\mu \right)^{1/2}.
\end{eqnarray*}

By using \cite[Lemma 5.3.8]{SC LNS}, for any $F_j   \in \mathcal{F}(F)$ with $j \neq 0$, 
\[
F \subset A F_j \cap E_i,
\]
where $A=8+4\eta+\eta^{-1}$. 
Then we obtain
\begin{small}
\begin{eqnarray*}
III_i 
\!\!\!\!\!
&=& \!\!\!\!\!\!\!\! \sum_{F\in \mathcal{W}_i^\prime}\int_{3F} |f_{3F} -f_{3F_1} |^2= \int_{B\cap E_i} \sum_{F \in \mathcal{W}_i^\prime} 
\chi_{3F}(x) |f_{3F}-f_{3F_1}|^2 d\mu (x)\\
& \!\!\!\!\!\!\!\!\!\!\!\!\!\!\!\leq&\!\!\!\!\!\!\!\!\!\!\!\! C \!\! \int_{B\cap E_i} \sum_{F \in \mathcal{W}_i^\prime} 
\chi_{3F}(x) \left( \sum_{j=1}^{l^\prime (F)-1}
\!\!\!\!\Big( \frac{\Lambda(12F_j, 12\kappa  F_j )}{\mu(F_j)} 
\int_{12\kappa F_j }
\!\!\!\!\!\!\!
 |\nabla f |^2d\mu \Big)^{1/2} \right)^2 \!\!d\mu (x)\\
&\!\!\!\!\!\!\!\!\!\!\!\!\!\!\! =&\!\!\!\!\!\!\!\!\!\!\!\!C\!\! \int_{B\cap E_i} \!\sum_{F \in \mathcal{W}_i^\prime} 
\chi_{3F}(x) \Big( \sum_{j=1}^{l^\prime (F)-1}
\!\!\!\!\Big( \frac{\Lambda(12F_j, 12\kappa F_j )}{\mu(F_j)} 
\int_{12\kappa  F_j } 
\!\!\!\!\!\!\!
|\nabla f |^2d\mu \Big)^{1/2} 
\!\!\!\!\!\!
\chi_{A F_j \cap E_i}(x)
\Big)^2 \!\!d\mu (x)\\
&\!\!\!\!\!\!\!\!\!\!\!\!\!\!\! \leq &\!\!\!\!\!\!\!\!\!\!\!\!C \!\!\int_{B\cap E_i} \sum_{F \in \mathcal{W}_i^\prime} 
\chi_{3F}(x) \Big( \sum_{G \in \mathcal{W}_i^\prime }
\Big( \frac{\Lambda(12G, 12\kappa G )}{\mu(G)}
 \int_{12\kappa G }
 \!\!\!\!\!\!\!
  |\nabla f |^2d\mu \Big)^{1/2} 
 \!\!\!\!\!\!\!
\chi_{A G \cap E_i}(x) \Big)^2 \!\!d\mu (x)\\
&\!\!\!\!\!\!\!\!\!\!\!\!\!\!\! \leq&\!\!\!\!\!\!\!\!\!\!\!\! CN(\eta, 3)\max_{G \in \mathcal{W}_i^\prime} \Lambda(12G, 12\kappa  G)
\!\! \int_{B\cap E_i} \!\!\!
\Big( \!\!\sum_{G \in \mathcal{W}_i^\prime } \!\! \Big( \frac{1}{\mu(G)}
\!\!
 \int_{12\kappa  G }
 \!\!\!\!\!\!\!
  |\nabla f |^2d\mu \Big)^{1/2} 
 \!\!\!\!\!\!\!
\chi_{A G \cap E_i}(x)\Big)^2 \!\! d\mu (x)\\
&\!\!\!\!\!\!\!\!\!\!\!\!\!\!\! =&\!\!\!\!\!\!\!\!\!\!\!\! CN(\eta, 3)\max_{G \in \mathcal{W}_i^\prime} \Lambda(12G, 12\kappa G)
 \left\| \sum_{G \in \mathcal{W}_i^\prime } a_G \chi_{A G \cap E_i} \right\|_{L^2(E_i)}^2,
\end{eqnarray*}
\end{small}
where $\chi_{D}$ is the characteristic function of a set $D$ and
\[
a_G=\left( \frac{1}{\mu(G)}
 \int_{12\kappa  G } |\nabla f |^2d\mu \right)^{1/2} .
\]
Since $E_i$ satisfies (VD),  applying Lemma \cite[5.3.12]{SC LNS}, there
exists a constant $C=C(A)>0$ such that 
\[
\left\Vert \sum_{G\in \mathcal{W}_{i}^\prime  }a_{G}\chi_{A G\cap E_i}\right\Vert
_{L^{2}(E_i)}\leq C\left\Vert \sum_{G\in \mathcal{W}_{i}^\prime }a_{G} \chi_{G}\right\Vert _{L^{2}(E_i)}.
\]%
By using (W1), (W4) and (\ref{inc2}), we obtain
\begin{small}
\begin{eqnarray*}
III_i &\!\!\! \leq & \!\!\! C^\prime N(\eta, 3) \max_{F\in \mathcal{W}_i^\prime }
\Lambda (12F, 12\kappa F) 
\int_{B\cap E_{i}}\left( \sum_{F\in \mathcal{W}_{i}^\prime }a_{F}\chi_{F}(x)\right) ^{2}d\mu (x) \\
&\!\!\! =&\!\!\! C^\prime N(\eta, 3)\max_{F\in \mathcal{W}_i^\prime }\Lambda (12 F, 12\kappa F) 
\!\!
 \int_{B\cap E_{i}}\sum_{F\in \mathcal{W}_{i}^\prime }\frac{1}{\mu(F)}\int_{12\kappa F}
\!\! |\nabla f|^{2}d\mu \chi_{F}(x)d\mu (x) \\
&\!\!\! =& \!\!\! C^\prime N(\eta, 3) 
\max_{F\in \mathcal{W}_{i}^\prime }\Lambda (12 F,12\kappa  F)\sum_{F\in 
\mathcal{W}_i^\prime }\int_{12\kappa F}|\nabla f|^{2}d\mu \\
&\!\!\! \leq &\!\!\! C^\prime N(\eta, 3)N(\eta, 12\kappa )
 \max_{F\in \mathcal{W}_{i}^\prime }\Lambda (12 F,12\kappa F)
 \int_{B}|\nabla f|^{2}d\mu ,
\end{eqnarray*}%
\end{small}
which gives a desired bound.
\subsubsection*{Estimate of $IV_i$}
Because $3F_1,  3B_0 \subset B_1$,  
by using the same argument as in the estimate of $III_i$, we obtain
\begin{eqnarray*}
IV_i &=&\sum_{F\in \mathcal{W}_i^\prime}  \int_{3F}
| f_{3F_1} - f_{3B_0} |^2   d\mu \\
&\leq & 
\sum_{F \in \mathcal{W}_i^\prime} \frac{\mu(3F)}{\mu(3F_1) \mu(3B_0)} \int_{B_1 \times B_1}
|f(x)-f(y) |^2 d\mu (x) d\mu (y)\\
&=&2\left( \sum_{F \in \mathcal{W}_i^\prime} \frac{\mu(3F)}{\mu(3F_1)} \right)
\frac{\mu(B_1)}{\mu(3B_0)} \int_{B_1 }
|f-f_{B_1} |^2 d\mu .
\end{eqnarray*}
Here we recall that the central ball satisfies (VD). Indeed, 
\begin{eqnarray}
\mu(B(o,s)) &\simeq &\mu(B_1(o_1, s)) +\mu(B_2(o_2, s) ) +\cdots \mu(B_k(o_k, s)) \nonumber \\
&\simeq &\max_{i} \mu(B_i(o_i, s)),  
\label{center VD}
\end{eqnarray}
where $B_i(o_i, s)$ is the geodesic ball in $M_i$ centered at $o_i \in K_i$. 
Since each $B_i(o_i,s)$ satisfies (VD), so does $B(o, s)$ by (\ref{center VD}). 
Hence $\mu(B_1)/\mu(3B_0)$ is bounded in $r$.
Now we estimate $\mu(3F_1)$ from below. 
By using (W3), Lemma \ref{lemF1} and (\ref{inc1}), 
for any $F\in \mathcal{W}_i^\prime$
\begin{eqnarray*}
r(F_1) &=&\eta d(F_1, B(o,r)^c)  \\
&\geq &\eta d(B_1, B(o,r)^c)=\eta  (1-(36\kappa +6)\eta)r,
\end{eqnarray*}
which implies that 
\[
A^\prime r(F_1) \geq 2r,
\]
where $A^\prime =\frac{2}{\eta(1-(36\kappa +6)\eta)} $. Then we obtain
\[
A^\prime F_1\supset  B(o,r).
\]
Since each end $E_i$ is doubling,  we obtain
\[
\mu(3F_1) \geq C^{-1} \mu(A^\prime F_1 \cap E_i ) \geq C^{-1} \mu(B(o,r)\cap E_i).
\]
By using (W2), we obtain for all $i=1, \ldots, k$
\[
\sum_{F \in \mathcal{W}_i^\prime} \frac{\mu(3F)}{\mu(3F_1)} 
\leq \frac{C}{\mu( B(o,r) \cap E_i)} \sum_{F \in \mathcal{W}_i^\prime} \mu(3F) \leq  C.
\]
Combining the above estimates, we obtain
\begin{eqnarray*}
IV_i &\leq & C^\prime \int_{B_1} |f-f_{B_1}|^2 d\mu \\
&\leq &C^\prime \Lambda(B_1, \kappa B_1) \int_{\kappa B_1} |\nabla f|^2 d\mu.
\end{eqnarray*}
Since $\kappa B_1 \subset B(o,r)$ by (\ref{inc1}), we conclude a desired estimate of $IV_i$.
\end{proof}

We use the following corollary to show the upper bound of the Poincar\'e constant  
(i.e., lower bound of the spectral gap) of central balls in Theorems \ref{main non-parab} 
and \ref{main parab}.
\begin{corollary}
Let $M$ be a manifold with nice ends $M_1, \ldots , M_k$, where each parabolic end satisfies (RCA) . If there is at least one parabolic end, 
assume that $M$ satisfies {\rm{(COE)}}. Then for  any large $r>0$
\[
\Lambda(B(o,r)) \lesssim    V_n(r) h_n (r),
\]
that is, 
\[
\lambda(B(o,r)) \gtrsim   \frac{1}{ V_n(r) h_n (r)},
\]
where $n$ is the index of the second largest end defined in (\ref{second largest end}).
\end{corollary}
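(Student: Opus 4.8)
The plan is to combine the couple estimate
$\Lambda\bigl(B(o,\rho),B(o,\kappa\rho)\bigr)\lesssim V_{n(\rho)}(\rho)\,h_{n(\rho)}(\rho)$
established in Section~\ref{SecUBPoincare} (for some dilation factor $\kappa\ge1$, all $\rho\gg1$, and with $h_{n}\simeq1$ when all ends are non-parabolic) with Theorem~\ref{main Whitney}, which bounds the single-ball constant $\Lambda(B(o,r))$ by a sum of \emph{couple} constants of the form $\Lambda(3B_0,3\kappa B_0)$, $\Lambda(B_1,\kappa B_1)$, and $\Lambda(3F,3\kappa F)$, $\Lambda(12F,12\kappa F)$ over the modified Whitney covering $\mathcal{W}'(\eta)$ of $B(o,r)$. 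There is no circularity: Theorem~\ref{main Whitney} only involves couples with ratio $\kappa$, so the couple bound of Section~\ref{SecUBPoincare} can be fed into it directly, and the output is precisely the reduction of $\kappa>1$ to $\kappa=1$.

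\textbf{Execution.} First I would fix the parameters. Take $\kappa$ to be the dilation factor coming from the Dirichlet heat-kernel estimate (\ref{estimate Dirichlet}) used via Lemma~\ref{Dirichlet Poincare} in Section~\ref{SecUBPoincare}, enlarged if necessary so that $\kappa\ge3$ and $\kappa\ge\kappa_i$ for every $i$, where $\kappa_i$ is the dilation constant in (PI) on the nice end $M_i$; enlarging $\kappa$ keeps the displayed couple bound valid by monotonicity of $\Lambda(\cdot,\cdot)$ in the second argument. Then fix $\eta\in(0,1/20)$ satisfying (\ref{eta}), and restrict to $r$ so large that $\mathrm{diam}\,K\le\eta r$ and that $r$, $36\kappa\eta r$, $(36\kappa+6)\eta r$ all exceed the threshold of Section~\ref{SecUBPoincare}. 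Next I estimate the three types of terms in Theorem~\ref{main Whitney}. For a remote ball $F\in\mathcal{W}_i'(\eta)$, inclusion (\ref{inc2}) gives $12\kappa F\subset E_i$, so $\Lambda(3F,3\kappa F)$ and $\Lambda(12F,12\kappa F)$ may be computed inside $M_i$; since $M_i$ satisfies (PI) and these dilated balls lie in $E_i$, both are $\lesssim r(F)^2\le(\eta r)^2\le r^2$, and $r^2\lesssim V_n(r)h_n(r)$ by (\ref{hi>}), hence $\lesssim V_n(r)h_n(r)$ uniformly in $F$. For the central couples, note $3B_0=B(o,36\kappa\eta r)$, $3\kappa B_0=B(o,\kappa\cdot36\kappa\eta r)$, and likewise $B_1=B(o,(36\kappa+6)\eta r)$, $\kappa B_1=B(o,\kappa(36\kappa+6)\eta r)$, so both are couple constants of central balls of radius $\rho\simeq r$ with ratio $\kappa$; the couple bound applies and gives $\lesssim V_{n(\rho)}(\rho)h_{n(\rho)}(\rho)$. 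Because $\kappa,\eta$ are fixed and $\rho\simeq r$, and because each $V_j$ is doubling (VD), each $h_j$ is doubling (a consequence of (\ref{hi}) and (\ref{hi>})), $h_i$ depends monotonically on $V_i$, and the family of ends is finite, one has
\[
V_{n(\rho)}(\rho)\,h_{n(\rho)}(\rho)\simeq V_{n(r)}(r)\,h_{n(r)}(r)
\]
(in the (COE) case this is also transparent from Lemma~\ref{Lem2}(e), the dominating end being scale-independent). Plugging the three estimates into Theorem~\ref{main Whitney} yields $\Lambda(B(o,r))\lesssim V_n(r)h_n(r)$, hence $\lambda(B(o,r))\gtrsim 1/(V_n(r)h_n(r))$.

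\textbf{Main obstacle.} The one point requiring genuine care is that a single $\kappa$ must simultaneously (i) be large enough for the Dirichlet lower bound (\ref{estimate Dirichlet}) and Lemma~\ref{Dirichlet Poincare} to deliver the couple bound of Section~\ref{SecUBPoincare}, (ii) serve as a legitimate dilation constant in (PI) on every end $M_i$ so that the remote-ball terms are controlled, and (iii) admit a valid choice of $\eta$ through constraint (\ref{eta}) and the geometric lemmas of Section~\ref{section Whitney}; finiteness of the number of ends makes all three requirements compatible. The remaining steps — identifying $3B_0$, $3\kappa B_0$, $B_1$, $\kappa B_1$ with central balls of radius comparable to $r$, and collapsing the right-hand side of Theorem~\ref{main Whitney} into a single constant multiple of $V_n(r)h_n(r)$ by doubling — are routine.
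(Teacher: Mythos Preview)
Your proposal is correct and follows essentially the same approach as the paper: apply Theorem~\ref{main Whitney}, bound the remote terms $\Lambda(3F,3\kappa F)$ and $\Lambda(12F,12\kappa F)$ by $r^2$ via (PI) on each end $M_i$ (using the inclusion (\ref{inc2})), bound the central terms $\Lambda(3B_0,3\kappa B_0)$ and $\Lambda(B_1,\kappa B_1)$ by the couple estimate from Section~\ref{SecUBPoincare}, and absorb $r^2$ into $V_n(r)h_n(r)$ via $V_ih_i\gtrsim r^2$. You supply more detail than the paper on harmonizing the various constants $\kappa$ and on the stability of $V_{n(\rho)}(\rho)h_{n(\rho)}(\rho)$ under $\rho\simeq r$, points the paper leaves implicit.
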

\begin{proof}
Because $3 \kappa F, 12\kappa F \subset E_i$ for all $F \in  \mathcal{W}_i^\prime$
by (\ref{inc2}), 
the assumption of (VD) and  (PI) on each $M_i$ implies that  for all
$F \in \mathcal{W}^\prime$
\[
\Lambda (3F, 3\kappa F), \Lambda(12 F, 12\kappa F) \lesssim  r^2.
\]
Using the estimates in section  \ref{SecUBPoincare}, we obtain
\[
 \Lambda (3B_0 , 3\kappa B_0)+\Lambda(B_1, \kappa B_1) \lesssim
  V_n (r)h_n(r).
\]
Since the inequality $V_i (r) h_i(r) \gtrsim r^2$ is always true by (VD), we
conclude the corollary from Theorem \ref{main Whitney} immediately.
\end{proof}

For the estimates of Poincar\'e constants of central balls 
on some examples of manifolds with ends, 
we refer to Section \ref{SecExamples}.

\section*{Acknowledgments}
The first and second authors would like to express their gratitude for the
hospitality and support by the Institute of Mathematical Sciences of the
Chinese University of Hong Kong and School of Mathematical Sciences, Nankai
University. The authors would like to thank the anonymous referee for their valuable comments and suggestions.





\end{document}
\endinput